\documentclass[12pt, reqno]{amsart} % AMS article style
\usepackage{amssymb,amscd,amsfonts,amsbsy}
\usepackage{latexsym}
\usepackage{exscale}
\usepackage{amsmath,amsthm,amsfonts}
\usepackage{mathrsfs}
\usepackage{xcolor} % Enables the creations of colors.
\usepackage[colorlinks=true,linkcolor=blue,citecolor=blue,urlcolor=blue]{hyperref} % Add coloured links
\usepackage{esint} % For \fint symbol
\usepackage{amssymb} % For \lesssim symbol
\usepackage{stmaryrd}
\usepackage{cite} 
\usepackage{tikz}
\parskip=3pt

\setlength{\oddsidemargin}{0in}
\setlength{\evensidemargin}{0in}
\setlength{\evensidemargin}{0in}
\setlength{\textwidth}{6.2in}
\setlength{\textheight}{9in}
\setlength{\topmargin}{-0.50in}
\calclayout

\allowdisplaybreaks[3] % Allow equation break between pages

\numberwithin{equation}{section}

\newtheorem{cor}{Corollary}[section]
\newtheorem{thm}{Theorem}[section]
\newtheorem{lem}{Lemma}[section]
\newtheorem{defn}{Definition}[section]
\newtheorem{rem}{Remark}[section]
\newtheorem{pro}{Proposition}[section]

\makeatletter
\@addtoreset{equation}{section}
\makeatother

\def\rn{{\mathbb R^n}}

\def\n{{\mathbb N}}

\def\BMO{{\rm BMO}}

\def\M{{\mathscr M}}
\def\I{{\mathscr I}}
\def\T{{\mathscr T}}

\def\A{{\mathscr A}}

\def\P{{\mathscr P}}

\def\F{{\mathscr F}}
\def\pp{{p(\cdot)}}

\def\Lpp{{L^\pp}}
\def\cpp{{p'(\cdot)}}
\def\Lcpp{L^\cpp}

\def\bbA{\mathbb{A}}

% Useful operators

\DeclareMathOperator{\dist}{dist}

% Abbreviations useful for variable Lebesgue spaces

\newcommand{\subRn}{{{\mathbb R}^n}}
\newcommand{\Q}{{Q_{j}^{k}}}

\newcommand{\avf}{{\langle f\rangle}}

\newcommand{\pap}{{p_{1}(\cdot)}}
\newcommand{\pbp}{{p_{2}(\cdot)}}
\newcommand{\cpap}{{p_{1}^{\prime}(\cdot)}}
\newcommand{\cpbp}{{p_{2}^{\prime}(\cdot)}}

\newcommand{\Lp}{L^{p(\cdot)}}

\newcommand{\qq}{{q(\cdot)}}
\newcommand{\rr}{{r(\cdot)}}
\newcommand{\sst}{{s(\cdot)}}

\newcommand{\Lq}{L^{q(\cdot)}}

%vector and matrix commands
\def\vf{\mathbf{f}}
\def\vg{\mathbf{g}}

\newcommand{\vv}{\mathbf{v}}

\newcommand{\ve}{\mathbf{e}}

% shortcuts for typesetting

% commands to create average integrals with horizontal bar

%\def\C{\mathbb{C}}
\def\F{\mathcal{F}}

\def\Z{\mathbb{Z}}

\def\R{\mathbb{R}}

\def\dist{\operatorname{dist}}

\def\BMO{\operatorname{BMO}}

% Common sets and operation commands
%\newcommand{\N}{\mathbb{N}}
%\newcommand{\Z}{\mathbb{Z}}
%\newcommand{\Q}{\mathbb{Q}}
%\newcommand{\R}{\mathbb{R}}

\newcommand{\D}{\mathscr{D}}

\newcommand{\calM}{\mathcal{M}}
\newcommand{\calS}{\mathcal{S}}

\begin{document}

\title[New fractional type weights...]
{\bf New fractional type weights and the boundedness of some operators}

\author[X. Cen]{Xi Cen$^{*}$}
\address{Xi Cen\\
Department of applied mathmatics\\
	School of Mathematics and Physics\\
	Southwest University of Science and Technology\\
	Mianyang 621010 \\
	People's Republic of China}\email{xicenmath@gmail.com}

\author[Q. He]{Qianjun He}
\address{Qunjun He\\
	School of Applied Science\\
	Beijing Information Science and Technology University\\
	Beijing 100192\\
	People's Republic of China}\email{qjhe@bistu.edu.cn}

\author[Z. Song]{Zichen Song}
\address{Zichen Song\\
	School of Mathematics and Stastics\\
	Xinyang Normal University\\
	Xinyang 464000\\
	People's Republic of China}\email{zcsong@aliyun.com}

\author[Z. Wang]{Zihan Wang}
\address{Zihan Wang\\
School of Statistics and Mathematics\\
Shandong University of Finance and Economics\\
Jinan {{250014}}\\
People's Republic of China}\email{zihwang@aliyun.com}

%\date{October 1, 2023}

\subjclass[2020]{42B25, 42B20, 42B35.}

\keywords{Multilinear; Fractional maximal operators; Fractional integral operators; Commutators; Variable exponents Lebesgue spaces; Multiple weights; Matrix weights; Rubio de Francia extrapolation.}

\thanks{$^{*}$ Corresponding author, Email: xicenmath@gmail.com}

\begin{abstract} 
Two classes of fractional type variable weights are established in this paper. The first kind of weights ${A_{\vec p( \cdot ),q( \cdot )}}$ are variable multiple weights, which are characterized by the weighted variable boundedness of multilinear fractional type operators, called multilinear Hardy--Littlewood--Sobolev theorem on weighted variable Lebesgue spaces. Meanwhile, the weighted variable boundedness for the commutators of multilinear fractional type operators are also obtained. This generalizes some known work, such as Moen\cite{Moen} (2009), Bernardis--Dalmasso--Pradolini \cite{Ber2014} (2014), and Cruz-Uribe--Guzmán \cite{Cruz2020} (2020).
Another class of weights ${{\mathbb{A}}_{p( \cdot ),q(\cdot)}}$ are variable matrix weights that also characterized by certain fractional type operators. This generalize some previous results on matrix weights ${{\mathbb{A}}_{p( \cdot )}}$.
\end{abstract}

\maketitle
\tableofcontents
%%%%%%%%%%%%%%%%%%%%%%%%% SECTION SECTION SECTION %%%%%%%%%%%%%%%%%%%%%
%%%%%%%%%%%%%%%%%%%%%%%%% SECTION SECTION SECTION %%%%%%%%%%%%%%%%%%%%%
\section{\bf Introduction}\label{sec1}

The fundamental theory of harmonic analysis focuses on studying integral operators in various function spaces. Many classical linear and sublinear operators in harmonic analysis have multilinear counterparts that extend beyond the linear cases. The pioneering work on multilinear operators, initiated by Coifman and Meyer \cite{Coifman1975On,Coifman1978commutators}, has been studied by numerous mathematicians, including notable contributions from Christ and Journé \cite{Christ1987Polynomial}, Kenig and Stein \cite{Kenig1999Multilinear}, and Demeter, Tao, and Thiele \cite{Demeter2008Maximal}.

Let $b$ be a locally integrable function on $\mathbb{R}^{n}$ and $T$ be an linear operator. Then the commutator operator $T_b$ is defined as
$$T_{b}(f)=bT(f)-T(bf),$$
and $b$ is called the symbol function of $T_{b}$.  The pioneering work on $T_{b}$, where $T$ is a nonconvolution operator and $b\in {\rm BMO}(\mathbb{R}^{n})$, was initiated by Coifman, Rochberg, and Weiss \cite{Coifman1976Factorization}. They introduced a new characterization of the ${\rm BMO}(\mathbb{R}^{n})$ space through the boundedness of $T_{b}$. For any ball $B\subseteq \mathbb{R}^{n}$, we say $b\in{\rm BMO}(\mathbb{R}^{n})$ if the norm
$$\|b\|_{{\rm BMO}(\mathbb{R}^{n})}:=\sup_{B}\frac{1}{|B|}\int_{B}|b_{B}-b(x)|dx<\infty,$$
where $ b_{B}=\frac{1}{|B|}\int_{B}b(x)dx.$

The theory of commutators is crucial in studying partial differential equations, particularly in finding solutions to many elliptic PDEs. This paper focuses on the boundedness of corresponding commutators for integral operators, which is essential for understanding these solutions. 

Let $T$ be an $m$-sublinear operator and $\vec{b}=(b_1,\ldots,b_m)$ be locally integrable functions. The $m$-sublinear commutator generated by $T$ and $\vec{b}$ is defined as
\begin{equation*}
	{{T}_{\Sigma \vec b}}({f_1}, \ldots ,{f_m})(x) =\sum\limits_{j = 1}^m {{T}_{\vec b}^j} (\vec f)(x): = \sum\limits_{j = 1}^m {{T}({f_1}, \ldots ,({b_j}(x) - {b_j}){f_j}, \ldots {f_m})(x)}
\end{equation*}
and the iterated commutator, generated by  $T$ and $\vec{b}$, is defined by
\begin{equation*}
	{{T}_{\Pi \vec b}}(\vec f)(x) = {T}(({b_1}(x) - {b_1}){f_1}, \ldots ,({b_m}(x) - {b_m}){f_m})(x).
\end{equation*}

%As a multilinearization of  Riesz potential%
In 1999, Kenig and Stein \cite{Kenig1999Multilinear} introduced the following multilinear fractional operators
\begin{align}
	\mathscr{I}_{\alpha}(\vec{f})(x):=\int_{(\mathbb{R}^{n})^{m}}\frac{f_{1}(y_{1})\cdots f_{m}(y_{m})}{(|x-y_{1}|+\cdots+|x-y_{m}|)^{mn-\alpha}}d{\vec y},
\end{align}
where $0<\alpha<mn$. 
Clearly, ${{\I}_\alpha }(\vec f)(x)$ is absolutely convergent for any $\vec f \in {\left( {L_c^\infty } \right)^m}$. Additionally, they proved the boundedness of product of Lebesgue spaces. Moen \cite{Moen} investigated weighted inequalities for multilinear fractional integral operators, which greatly influenced this work.
In his study, the multilinear fractional maximal operator is defined as
\begin{align}\label{MFMO}
	{{\mathscr M}_\alpha }(\vec f)(x): = \mathop {\sup }\limits_{r > 0} \frac{1}{{|B(x,r){|^{m - \frac{\alpha }{n}}}}}\prod\limits_{i = 1}^m {\int_{B(x,r)} | {f_i}({y_i})|d{y_i}}.
\end{align}
Several researchers have recently investigated the boundedness of multilinear operators on certain function spaces. Chen and Xue \cite{Chen-Xue} focused on weighted estimates for a specific class of multilinear fractional type operators. Meanwhile, Chen and Wu \cite{chen-wu} examined multiple weighted estimates for commutators of multilinear fractional integral operators. Additionally, Xue \cite{xue7} explored weighted estimates for the iterated commutators of multilinear maximal and fractional type operators.

Recalling some definitions for the variable Lebesgue spaces, for a measurable subset $E \subseteq {\rn}$, we denote ${p_ - }(E)= \mathop {ess\inf }\limits_{x \in E} \{ p(x)\}, {p_ + }(E) =\mathop {ess\sup }\limits_{x \in E} \{ p(x)\}$. Especially, denote ${p_ - }={p_- }(\rn)$ and ${p_+ }={p_ + }(\rn)$. We give several sets of exponents as follows.
\begin{align*}
	&{\P}\left( E \right) = \{ p( \cdot ) :{\rm{E}} \to \left[ {1,\infty } \right) \text{ is measurable: } 1 < {p_ - }(E) \le {p_{\rm{ + }}}(E) < \infty \};\\
	&{{\P}_1}\left( {E} \right) = \{ p( \cdot ) :{\rm{E}} \to \left[ {1,\infty }\right) \text{ is measurable: } 1 \le {p_ - }(E) \le {p_{\rm{ + }}}(E) < \infty \};\\
	&{{\P}_0}\left( {E} \right) = \{ p( \cdot ) :{\rm{E}} \to \left[ {0,\infty }\right) \text{ is measurable: } 0 < {p_ - }(E) \le {p_{\rm{ + }}}(E) < \infty \}.
\end{align*}
Obviously, ${\P}\left( E \right) \subseteq {{\P}_1}\left( {E} \right) \subseteq {{\P}_0}\left( {E} \right)$. When $E=\mathbb{R}^n$, we take a shorthand. For example, ${\P}\left( \rn \right)$ will be denoted by ${\P}$.

Let $E \subseteq \rn$ and $p( \cdot ) \in \P_0(E)$. We define the variable exponent Lebesgue spaces with Luxemburg norm by
\begin{equation*}
	L_{}^{p( \cdot )}(E) = \{ f:{\left\| f \right\|_{L_{}^{p( \cdot )}(E)}} := \inf \{ \lambda  > 0:{\int_E^{} {\left( {\frac{{\left| {f(x)} \right|}}{\lambda }} \right)} ^{p(x)}}dx \le 1\}  < \infty \}.
\end{equation*}
For an open set $\Omega \subseteq \rn$, we define the locally variable exponent Lebesgue spaces by
\begin{equation*}
	L_{loc}^{p( \cdot )}(\Omega) = \{ f: \text{ For all compact subsets } E \subseteq \Omega, f \in L_{}^{p( \cdot )}(E) \}.
\end{equation*}
Let $\omega$ be a weight function on $E$. The variable exponent weighted Lebesgue spaces are defined by
\begin{equation*}
L_{}^{p( \cdot )}(E,\omega ) = \{ f:{\left\| f \right\|_{L_{}^{p( \cdot )}(E,\omega )}} = {\left\| {\omega f{\chi _E}} \right\|_{L_{}^{p( \cdot )}}} < \infty \}.
\end{equation*}
%We define a important set $\mathcal{B}$ by
%\begin{equation*}
%\mathcal{B} := \{ p( \cdot ) \in {\P}:\text{ the Hardy-Littlewood maximal operator }M \in B(L_{}^{p( \cdot )} \to L_{}^{p( \cdot )}))\}.
%\end{equation*}
We say $p( \cdot ) \in LH(\rn)$(globally log-H\"{o}lder continuous functions), if $p( \cdot )$ satisfies
$$\left| {p(x) - p(y)} \right| \le \frac{C}{{ - \log \left( {\left| {x - y} \right|} \right)}}, \quad \text{when} \quad \left| {x - y} \right|< \frac{1}{2}, $$
there exist some ${p_\infty }>0$, such that for any $x \in \mathbb{R}^n$, then
$$\left| {p(x) - p_\infty} \right| \le \frac{C}{{\log \left( {e + \left| x \right|} \right)}}.
$$

In \cite{Cruz2017}, Cruz and Wang showed the following relationship:
\begin{align}
p(\cdot) \in LH \cap \P \text{    if and only if    } p'(\cdot) \in LH \cap \P.
\end{align}
The classical \( A_p \) and \( A_{p,q} \) weights theory, introduced by Muckenhoupt, studies the weighted \( L^p \) boundedness of Hardy-Littlewood maximal functions. For a detailed exploration of this theory, see \cite[Chapter 7]{Gra1}.
\begin{defn}[\cite{Gra1}]
Let \( 1 < p < \infty \). We say that a weight \( \omega \) belongs to \( A_p \) if it satisfies
	\begin{align*}
	\left[ \omega  \right]_{{A_p}}: = \mathop {\sup }\limits_{B \subseteq \rn} {\frac{1}{|B|}}{\left\| {{\omega ^{ - 1}}} \right\|_{{L^{p'}}(B)}}{\left\| \omega  \right\|_{L^p(B)}} < \infty.
	\end{align*}
	Denote $${A_\infty } = \bigcup\limits_{1 \le p < \infty } {{A_p}}.$$
\end{defn}
\begin{defn}[\cite{Gra1}]\label{RH1}
	A weight function \( \omega \) is said to belong to the reverse Hölder class \( RH_r \) if there exist two constants \( r > 1 \) and \( C > 0 \) such that the weight satisfies the following reverse Hölder inequality:
	$$\left(\frac{1}{|B|}\int_B \omega(x)^r\,dx\right)^{1/r}\le C\left(\frac{1}{|B|}\int_B \omega(x)\,dx\right)\quad\mbox{for every ball}\; B\subseteq \mathbb R^n.$$
	It is well established that if \( \omega \) belongs to \( A_p \) for some \( 1 < p < \infty \), then \( \omega \) also belongs to \( A_r \) for all \( r > p \), and \( \omega \) belongs to \( A_q \) for some \( 1 < q < p \). Additionally, if \( \omega \) is in \( A_{\infty} \), then there exists an \( r > 1 \) such that \( \omega \) belongs to the reverse Hölder class \( RH_r \).
\end{defn}

\begin{lem} [\cite{Cruz2020}]\label{lemma:Ainfty-prop}
Let \(\omega \in A_{\infty}\). Then, for each \(0 < \alpha < 1\), there exists $\beta \in (0,1)$ such that for any cube \(Q\) and any subset \(E \subseteq Q\) with \(\alpha|Q| \leq |E|\), it holds that \(\beta \omega(Q) \leq \omega(E)\). Similarly, for each \(0 < \gamma < 1\), there exists $\delta \in (0,1)$ such that if \(|E| \leq \gamma |Q|\), then \(\omega(E) \leq \delta \omega(Q)\).
\end{lem}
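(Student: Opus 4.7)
The plan is to derive both assertions from one quantitative subset inequality. Using the definition $A_\infty = \bigcup_{p \geq 1} A_p$, I will fix some $p > 1$ with $\omega \in A_p$ and establish
\[
\frac{\omega(E)}{\omega(Q)} \;\geq\; \frac{1}{[\omega]_{A_p}}\left(\frac{|E|}{|Q|}\right)^p \quad \text{for every } E \subseteq Q. \qquad (\star)
\]
To prove $(\star)$, I would write $|E| = \int_Q \chi_E\, \omega^{1/p}\, \omega^{-1/p}$ and apply H\"older's inequality with exponents $(p,p')$, getting $|E| \leq \omega(E)^{1/p}\bigl(\int_Q \omega^{-1/(p-1)}\bigr)^{1/p'}$. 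Raising to the $p$-th power and invoking the $A_p$ bound $\bigl(\int_Q \omega^{-1/(p-1)}\bigr)^{p-1} \leq [\omega]_{A_p}|Q|^p/\omega(Q)$ yields $(\star)$ after rearrangement. Note that $[\omega]_{A_p} \geq 1$ (by H\"older applied to $|Q| = \int_Q \omega^{1/p}\omega^{-1/p}$), which will be used to ensure $\beta < 1$ below.

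From $(\star)$, the first assertion is immediate: if $\alpha|Q| \leq |E|$, I take $\beta := \alpha^p/[\omega]_{A_p} \in (0,1)$, where membership in $(0,1)$ uses $\alpha < 1$ and $[\omega]_{A_p} \geq 1$. For the second assertion, I use a complementarity trick: if $|E| \leq \gamma|Q|$, then $|Q \setminus E| \geq (1-\gamma)|Q|$, so $(\star)$ applied to $Q \setminus E$ gives $\omega(Q \setminus E) \geq ((1-\gamma)^p/[\omega]_{A_p})\omega(Q)$, whence $\omega(E) \leq \delta\, \omega(Q)$ with $\delta := 1 - (1-\gamma)^p/[\omega]_{A_p} \in (0,1)$.

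I do not anticipate any real obstacle. The single computational step is the H\"older-plus-$A_p$ estimate, which is classical; the remainder is elementary algebra combined with set complementation. An alternative route via the reverse H\"older inequality $\omega \in RH_r$ guaranteed by Definition 1.3 would provide only a one-sided bound $\omega(E) \leq C(|E|/|Q|)^{1/r'}\omega(Q)$; deriving both assertions from that bound for the entire range $\alpha, \gamma \in (0,1)$ would demand a separate case analysis depending on how close $\alpha$ is to $0$ (or $\gamma$ to $1$), whereas the $A_p$ route dispatches both statements uniformly in a single stroke.
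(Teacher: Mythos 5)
Your proof is correct. The paper cites this lemma to Cruz-Uribe--Guzm\'an without reproducing a proof, so there is no internal argument to compare against; your derivation via the quantitative subset inequality $\omega(E)/\omega(Q)\geq [\omega]_{A_p}^{-1}(|E|/|Q|)^p$ and the complementarity trick is the standard one, and all the steps (the H\"older application, the use of $[\omega]_{A_p}\geq 1$ to keep $\beta,\delta$ in $(0,1)$) check out.
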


To address the boundedness of multilinear fractional-type operators in variable function spaces, we introduce a definition of variable multiple weights. 

\begin{defn}\label{vweight3}
Let  $p_i(\cdot) \in \P$, $i=1,\cdots,m$, with $\frac{1}{{p( \cdot )}} = \sum\limits_{i = 1}^m {\frac{1}{{{p_i}( \cdot )}}}$, $\frac{{{\alpha }}}{n}{\rm{ = }}\frac{1}{{{p}( \cdot )}}{\rm{ - }}\frac{1}{{{q}( \cdot )}} \in [0,m)$, and $\vec{\omega}=(\omega_1,\ldots,\omega_m)$ is a multiple weight with $\omega:= \prod\limits_{i = 1}^m {{\omega _i}}$. We say $\vec{\omega} \in A_{\vec{p}(\cdot),q(\cdot)}$, if it satisfies
	\begin{align*}
	{\left[ {\vec \omega } \right]_{{A_{\vec p( \cdot ),q( \cdot )}}}}: = \mathop {\sup }\limits_{B \subseteq \rn} {\left| B \right|^{\frac{\alpha }{n} - m}}{\left\| {\omega {\chi _B}} \right\|_{{L^{q( \cdot )}}}}\prod\limits_{i = 1}^m {{{\left\| {\omega _i^{ - 1}{\chi _B}} \right\|}_{{L^{{p_i}^\prime ( \cdot )}}}}}  < \infty.
	\end{align*}
\end{defn}

\begin{rem}
Actually, the above definition contain a broader range of weights, allowing us to derive established weights by imposing specific conditions.

	$(1)~$ If $\alpha=0$, then ${A_{\vec p( \cdot ),q( \cdot )}} = {A_{\vec p( \cdot )}}$ introduced in \cite{Cruz2020}.
	
	$(2)~$ If $\vec p( \cdot ) \equiv \vec p$, then ${A_{\vec p( \cdot ),q( \cdot )}} = {A_{\vec p,q}}$ introduced in \cite{Moen}.
	
	$(3)~$ If $\vec p( \cdot ) \equiv \vec p$ and $\alpha=0$, then ${A_{\vec p( \cdot ),q( \cdot )}} = {A_{\vec p}}$ introduced in \cite{Lerner}.
	
	$(4)~$ If $m=1$, then ${A_{\vec p( \cdot ),q( \cdot )}} = {A_{p( \cdot ), q(\cdot)}}$ introduced in \cite{Ber2014}.
	
	$(5)~$ If $m=1$ and $\alpha=0$, then ${A_{\vec p( \cdot ),q( \cdot )}} = {A_{p( \cdot )}}$ introduced in \cite{Cruz2011}. 
	
	$(6)~$ If $m=1$ and $\vec p( \cdot ) \equiv \vec p$, then ${A_{\vec p( \cdot ),q( \cdot )}} = {A_{ p, q}}$ introduced in \cite{Muckenhoupt1974}.
	
	$(7)~$ If $m=1$, $\vec p( \cdot ) \equiv \vec p$, and $\alpha=0$, then ${A_{\vec p( \cdot ),q( \cdot )}} = {A_p}$ introduced in \cite{Muckenhoupt1972}.
\end{rem}

\begin{figure}[!h]
	\begin{center}
		\begin{tikzpicture}
		\node (1) at(0,0) {$A_p$};
		\node (3) at(2.5,0) {$A_{p(\cdot)}$};
		\node (2) at(2.5,2.5) {$A_{p, q}$};
		\node (4) at(2.5,-2.5) {$A_{\vec{p}}$};
		\node (5) at(5,2.5) {$A_{p(\cdot), q(\cdot)}$};
		\node (6) at(5,0) {$A_{\vec{p}(\cdot)}$};
		\node (7) at(5,-2.5) {$A_{\vec{p}, q}$};
		\node (8) at(7.5,0) {$A_{\vec{p}(\cdot), q(\cdot)}$};
		\draw[->] (1)--(2);
		\draw[->] (1)--(3);
		\draw[->] (1)--(4);
		\draw[->] (2)--(5);
		\draw[->] (2)--(7);
		\draw[->] (3)--(5);
		\draw[->] (3)--(6);
		\draw[->] (4)--(6);
		\draw[->] (4)--(7);
		\draw[->] (5)--(8);
		\draw[->] (6)--(8);
		\draw[->] (7)--(8);
		\end{tikzpicture}
	\end{center}
	\caption{The relationships between weights}\label{figure1}
\end{figure}
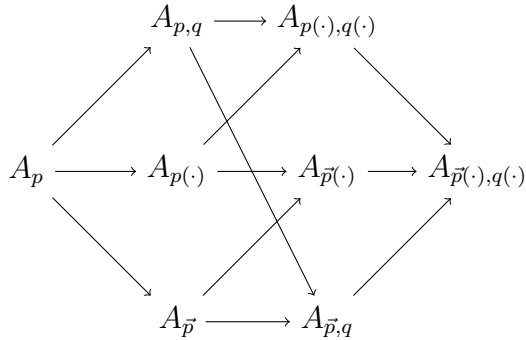

\begin{rem}
Let $\alpha  = \sum\limits_{i = 1}^m {{\alpha _i}}$, $\frac{{{\alpha _i}}}{n}{\rm{ = }}\frac{1}{{{p_i}( \cdot )}}{\rm{ - }}\frac{1}{{{q_i}( \cdot )}} \in (0,1)$, and $\frac{1}{{q( \cdot )}} = \sum\limits_{i = 1}^m {\frac{1}{{{q_i}( \cdot )}}}$. By H\"older's inequality, we have
	\begin{align*}
	{\left[ {\vec \omega } \right]_{{A_{\vec p( \cdot ),q( \cdot )}}}} \lesssim \prod\limits_{i = 1}^m {{{\left[ {{\omega _i}} \right]}_{{A_{{p_i}( \cdot ),{q_i}( \cdot )}}}}}.
	\end{align*}
Then we obtain
	$$\mathop  \prod \limits_{i = 1}^m {A_{{p_i}( \cdot ),{q_i}( \cdot )}} \subsetneqq {A_{\vec p( \cdot ),q( \cdot )}}.$$
\end{rem}

Additionally, we will present a crucial lemma \ref{vweight4} that supports this framework.
\begin{lem}\label{vweight4}
	Let  $p_i(\cdot) \in \P$, $i=1,\cdots,m$, with $\frac{1}{{p( \cdot )}} = \sum\limits_{i = 1}^m {\frac{1}{{{p_i}( \cdot )}}}$, and $\frac{{{\alpha }}}{n}{\rm{ = }}\frac{1}{{{p}( \cdot )}}{\rm{ - }}\frac{1}{{{q}( \cdot )}} \in [0,m)$. Then $\vec{\omega} \in A_{\vec{p}(\cdot),q(\cdot)}$ implies that 
	\begin{equation}
	\left\{
	\begin{aligned}
	&\omega _i^{-\frac{1}{m}} \in {A_{m{p_i}^\prime ( \cdot )}}, i=1,\dots,m,\\
	&\omega^{\frac{1}{m}}\in {A_{mq( \cdot )}}.
	\end{aligned}\right.
	\end{equation}
\end{lem}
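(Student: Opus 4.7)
The plan is to derive both conclusions by combining two standard tools: (i) the homogeneity identity for the Luxemburg norm, namely $\|g^{1/m}\chi_B\|_{L^{mp(\cdot)}} = \|g\chi_B\|_{L^{p(\cdot)}}^{1/m}$ for nonnegative $g$, which follows immediately from the definition of the Luxemburg norm by the substitution $\lambda \mapsto \lambda^m$; and (ii) the generalized H\"older inequality in variable Lebesgue spaces, applied with an extra $\chi_B$-factor in the constant exponent $mn/\alpha$ so as to absorb the fractional deficit $\alpha/n$. The $\alpha = 0$ case will be handled by simply omitting that extra slot.

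For the claim $\omega^{1/m}\in A_{mq(\cdot)}$, I would control the quantity $|B|^{-1}\,\|\omega^{1/m}\chi_B\|_{L^{mq(\cdot)}}\,\|\omega^{-1/m}\chi_B\|_{L^{(mq(\cdot))'}}$. By (i) the first factor equals $\|\omega\chi_B\|_{L^{q(\cdot)}}^{1/m}$. For the second, factor $\omega^{-1/m}\chi_B = \chi_B \cdot \prod_{i=1}^m(\omega_i^{-1/m}\chi_B)$ and invoke (ii) with exponents $mn/\alpha$ and $mp_i'(\cdot)$, $i=1,\ldots,m$. The bookkeeping
$$\frac{\alpha}{mn} + \sum_{i=1}^m\frac{1}{mp_i'(\cdot)} = \frac{\alpha}{mn} + 1 - \frac{1}{mp(\cdot)} = 1 - \frac{1}{mq(\cdot)} = \frac{1}{(mq(\cdot))'}$$
legitimates the split, where I used $\frac{1}{p(\cdot)} - \frac{1}{q(\cdot)} = \frac{\alpha}{n}$. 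Since $\|\chi_B\|_{L^{mn/\alpha}} = |B|^{\alpha/(mn)}$, applying (i) to each weighted factor yields the bound $|B|^{\alpha/(mn)}\prod_i\|\omega_i^{-1}\chi_B\|_{L^{p_i'(\cdot)}}^{1/m}$. Assembling, the target quantity is at most
$$\bigl(|B|^{\alpha/n - m}\,\|\omega\chi_B\|_{L^{q(\cdot)}}\prod_{i=1}^m\|\omega_i^{-1}\chi_B\|_{L^{p_i'(\cdot)}}\bigr)^{1/m} \le [\vec\omega]_{A_{\vec p(\cdot),q(\cdot)}}^{1/m}.$$

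For the claim $\omega_i^{-1/m}\in A_{mp_i'(\cdot)}$ I would control $|B|^{-1}\,\|\omega_i^{-1/m}\chi_B\|_{L^{mp_i'(\cdot)}}\,\|\omega_i^{1/m}\chi_B\|_{L^{(mp_i'(\cdot))'}}$. The first factor equals $\|\omega_i^{-1}\chi_B\|_{L^{p_i'(\cdot)}}^{1/m}$ by (i). For the second, use $\omega_i = \omega\prod_{j\neq i}\omega_j^{-1}$ to write
$$\omega_i^{1/m}\chi_B = \chi_B \cdot(\omega^{1/m}\chi_B)\cdot\prod_{j\neq i}(\omega_j^{-1/m}\chi_B),$$
and apply (ii) with exponents $mn/\alpha$, $mq(\cdot)$, and $mp_j'(\cdot)$, $j\neq i$. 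A parallel check gives $\frac{\alpha}{mn}+\frac{1}{mq(\cdot)}+\sum_{j\neq i}\frac{1}{mp_j'(\cdot)} = 1 - \frac{1}{mp_i'(\cdot)} = \frac{1}{(mp_i'(\cdot))'}$, and the same packaging returns a bound of $[\vec\omega]_{A_{\vec p(\cdot),q(\cdot)}}^{1/m}$.

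The proof is essentially arithmetic, so there is no single hard step: the main point of care is ensuring the exponents sum correctly so that the $|B|^{\alpha/n - m}$ factor in the definition of $[\vec\omega]_{A_{\vec p(\cdot),q(\cdot)}}$ is exactly reconstituted after the H\"older splits. The only side issue is the degenerate case $\alpha = 0$, in which the auxiliary $\chi_B$-factor is dropped and the identities close without the $mn/\alpha$ slot; one may also want to record, once and for all, the homogeneity identity (i) as a separate remark since it is the workhorse of the argument.
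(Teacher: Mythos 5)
Your proof is correct, and the bookkeeping of exponents checks out in both cases. The approach is essentially the same as the paper's: both rely on the $m$-th-root homogeneity of the Luxemburg norm and the generalized H\"older inequality to reduce the $A_{mp_i'(\cdot)}$ and $A_{mq(\cdot)}$ quantities to the defining $A_{\vec p(\cdot),q(\cdot)}$ expression. Your organization is a little more streamlined: you build the fractional deficit $|B|^{\alpha/(mn)}$ into a single H\"older split by including a constant-exponent slot $L^{mn/\alpha}$, whereas the paper handles it in two stages --- for $\omega_j^{-1/m}$ it first splits against $mp(\cdot)$ and then performs a separate H\"older step to pass from $L^{p(\cdot)}$ to $L^{q(\cdot)}$, and for $\omega^{1/m}$ it introduces auxiliary exponents $q_i(\cdot)$ with $\frac{1}{q_i(\cdot)}=\frac{1}{p_i(\cdot)}-\frac{\alpha}{mn}$ and applies H\"older componentwise. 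Both routes give the same conclusion with the same implicit constants; yours is marginally cleaner and makes the $\alpha=0$ degeneration transparent (drop the $L^{mn/\alpha}$ slot), which the paper leaves implicit.
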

\begin{proof}[Proof:]
	%Necessity:
	Note that $$\frac{1}{{mp( \cdot )}} + \sum\limits_{i \ne j} {\frac{1}{{m{p_i}^\prime ( \cdot )}}}  = \frac{1}{{(m{p_j}^\prime )'( \cdot )}}.$$
	By H\"older's inequality, we obtain
	\begin{align*}
	{\left\| {\omega _j^{ - \frac{1}{m}}{\chi _B}} \right\|_{{L^{m{p_j}^\prime ( \cdot )}}}}{\left\| {\omega _j^{\frac{1}{m}}{\chi _B}} \right\|_{{L^{(m{p_j}^\prime )'( \cdot )}}}}
	=& {\left\| {\omega _j^{ - \frac{1}{m}}{\chi _B}} \right\|_{{L^{m{p_j}^\prime ( \cdot )}}}}\left\| {{\omega ^{\frac{1}{m}}}\prod\limits_{i \ne j} {\omega _i^{ - \frac{1}{m}}} {\chi _B}} \right\|_{{L^{(m{p_j}^\prime )'( \cdot )}}}\\
	\lesssim& {\left\| {{\omega ^{\frac{1}{m}}}{\chi _B}} \right\|_{{L^{mp( \cdot )}}}}\prod\limits_{i = 1}^m {{{\left\| {\omega _i^{ - \frac{1}{m}}{\chi _B}} \right\|}_{{L^{m{p_i}^\prime ( \cdot )}}}}}\\
	\lesssim& \left| B \right|{\left( {{{\left| B \right|}^{\frac{\alpha }{n} - m}}\left\| {\omega {\chi _B}} \right\|_{{L^{q( \cdot )}}}^{}\prod\limits_{i = 1}^m {\left\| {\omega _i^{ - 1}{\chi _B}} \right\|_{{L^{{p_i}^\prime ( \cdot )}}}^{}} } \right)^{\frac{1}{m}}}.
	\end{align*}
	Thus, we have
	$$\left[ {\omega _j^{ - \frac{1}{m}}} \right]_{{A_{m{p_j}^\prime ( \cdot )}}}^m \lesssim {\left[ {\vec \omega } \right]_{{A_{\vec p( \cdot ),q( \cdot )}}}}.$$
	
	We also have the following estimate:
	\begin{align*}
	{\frac{1}{|B|}}{\left\| {{\omega ^{\frac{1}{m}}}{\chi _B}} \right\|_{{L^{mq( \cdot )}}}}{\left\| {{\omega ^{ - \frac{1}{m}}}{\chi _B}} \right\|_{{L^{(mq)'( \cdot )}}}} \lesssim& {\frac{1}{|B|}}{\left\| {{\omega ^{\frac{1}{m}}}{\chi _B}} \right\|_{{L^{mq( \cdot )}}}}\prod\limits_{i = 1}^m {{{\left\| {{\omega_i ^{ - \frac{1}{m}}}{\chi _B}} \right\|}_{{L^{m{q_i}^\prime ( \cdot )}}}}}\\
	\le& {\frac{1}{|B|}}{\left( {{{\left\| {\omega {\chi _B}} \right\|}_{{L^{q( \cdot )}}}}\prod\limits_{i = 1}^m {{{\left\| {{\omega_i ^{ - 1}}{\chi _B}} \right\|}_{{L^{{q_i}^\prime ( \cdot )}}}}} } \right)^{\frac{1}{m}}}\\
	\lesssim& {\left( {{{\left| B \right|}^{\frac{\alpha }{n} - m}}{{\left\| {\omega {\chi _B}} \right\|}_{{L^{q( \cdot )}}}}\prod\limits_{i = 1}^m {{{\left\| {{\omega_i^{ - 1}}{\chi _B}} \right\|}_{{L^{{p_i}^\prime ( \cdot )}}}}} } \right)^{\frac{1}{m}}},\\
	\end{align*}
	where $\frac{1}{{{q_i}( \cdot )}}: = \frac{1}{{{p_i}( \cdot )}} - \frac{\alpha }{{mn}}$, $i=1,\cdots,m$.
	
	Therefore, we also get
	$$\left[ {{\omega ^{\frac{1}{m}}}} \right]_{{A_{mq( \cdot )}}}^m \lesssim {\left[ {\vec \omega } \right]_{{A_{\vec p( \cdot ),q( \cdot )}}}}.$$
	%Sufficiency:
\end{proof}
%Similar to Lemma \ref{vweight4}, we can obtain the following lemma, see also \cite[Proposition 4.7]{Cruz2020}.
%\begin{lem}\label{vweight2}
%	Let $p_i \in \P$, $i=1,\ldots,m$, and $\frac{1}{{p( \cdot )}} = \sum\limits_{i = 1}^m {\frac{1}{{{p_i}( \cdot )}}}$. Then $\vec{\omega} \in A_{\vec{p}(\cdot)}$ if and only if 
%	\begin{equation}
%	\left\{
%	\begin{aligned}
%	&\omega _i^{-\frac{1}{m}} \in {A_{m{p_i}^\prime ( \cdot )}}, i=1,\dots,m,\\
%	&\omega^{\frac{1}{m}}\in {A_{mp( \cdot )}}.
%	\end{aligned}\right.
%	\end{equation}
%\end{lem}

The following lemma follows immediately from Lemma \ref{vweight4} and Lemma \ref{lemma:Ap-Ainf}.
\begin{lem}\label{cor:Ainfty}
    Let \( p_i(\cdot) \in \P \) for \( i = 1, \ldots, m \). Assume that $\frac{1}{{p( \cdot )}} = \sum\limits_{i = 1}^m {\frac{1}{{{p_i}( \cdot )}}}$ and $\frac{\alpha}{n} = \frac{1}{p(\cdot)} - \frac{1}{q(\cdot)} \in [0,m)$. If $\vec{\omega} \in A_{\vec{p}(\cdot), q(\cdot)}$, then both $u(\cdot):= \omega(\cdot)^{q(\cdot)}$ and $\sigma_j(\cdot) :=\omega_j(\cdot)^{-p_j'(\cdot)}$, for any $j=1,\ldots,m$, belong to $A_{\infty}$.
\end{lem}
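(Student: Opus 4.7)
The plan is to execute a two-step chain. First invoke Lemma \ref{vweight4} to pass from the multilinear hypothesis $\vec\omega \in A_{\vec p(\cdot),q(\cdot)}$ to one-weight variable-Muckenhoupt membership for suitable roots of $\omega$ and $\omega_j$; then invoke the cited auxiliary lemma (Lemma \ref{lemma:Ap-Ainf}), which asserts that any $w$ belonging to the variable class $A_{r(\cdot)}$ with $r(\cdot)\in\P\cap LH$ satisfies $w^{r(\cdot)}\in A_\infty$ in the classical sense. No new quantitative machinery needs to be developed; the entire proof is pointwise exponent arithmetic.

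Concretely, Lemma \ref{vweight4} gives
\[
\omega^{1/m}\in A_{mq(\cdot)}\qquad\text{and}\qquad \omega_j^{-1/m}\in A_{mp_j'(\cdot)}\quad (j=1,\ldots,m).
\]
Applying Lemma \ref{lemma:Ap-Ainf} to the first, I would raise by the exponent $mq(\cdot)$ to obtain
\[
(\omega^{1/m})^{mq(\cdot)}=\omega^{q(\cdot)}=u(\cdot)\in A_\infty.
\]
Applying the same lemma to each $\omega_j^{-1/m}\in A_{mp_j'(\cdot)}$ and raising by $mp_j'(\cdot)$ yields
\[
(\omega_j^{-1/m})^{mp_j'(\cdot)}=\omega_j^{-p_j'(\cdot)}=\sigma_j(\cdot)\in A_\infty,
\]
which is precisely the second half of the conclusion.

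The only item really worth verifying is that the two exponents appearing in Lemma \ref{vweight4}, namely $mq(\cdot)$ and $mp_j'(\cdot)$, lie in the admissible range required by Lemma \ref{lemma:Ap-Ainf}. Since $p_i(\cdot)\in\P$ we have $p_j'(\cdot)\in\P$, hence $mp_j'(\cdot)\in\P$. The condition $\frac{\alpha}{n}\in[0,m)$ combined with $\frac{1}{p(\cdot)}=\sum_{i=1}^{m}\frac{1}{p_i(\cdot)}$ guarantees $q(\cdot)$ has a positive lower bound and a finite upper bound (and in fact $q_-\geq 1$ after multiplication by $m$), so $mq(\cdot)\in\P$ as well.

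Obstacle assessment: there is essentially no analytic obstacle, only bookkeeping. The substance of the result is entirely contained in Lemma \ref{vweight4} (already proved in the excerpt) and in the off-the-shelf implication $A_{r(\cdot)}\Rightarrow$ classical $A_\infty$ after raising to the power $r(\cdot)$. If anything, the single point deserving a sentence in the write-up is the pointwise identity $(\omega^{1/m})^{mq(x)}=\omega(x)^{q(x)}$, which is used to identify the output of the two-step chain with the defined weights $u(\cdot)$ and $\sigma_j(\cdot)$.
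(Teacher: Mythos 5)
Your proof reproduces exactly what the paper does: Lemma \ref{vweight4} gives $\omega^{1/m}\in A_{mq(\cdot)}$ and $\omega_j^{-1/m}\in A_{mp_j'(\cdot)}$, then Lemma \ref{lemma:Ap-Ainf} together with the pointwise identities $(\omega^{1/m})^{mq(\cdot)}=u$ and $(\omega_j^{-1/m})^{mp_j'(\cdot)}=\sigma_j$ gives $A_\infty$ membership, which is the paper's one-line justification. One slip worth noting, shared with the paper itself: Lemma \ref{lemma:Ap-Ainf} requires the exponent to lie in $LH\cap\P_1$, not merely $\P_1$, so beyond the $\P$-membership you check one also needs $mq(\cdot)$ and $mp_j'(\cdot)\in LH$, which holds only under the implicit hypothesis $p_i(\cdot)\in LH$ (true everywhere the lemma is actually invoked, e.g.\ Theorem \ref{FMVLP}, but absent from the stated hypotheses of this lemma).
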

\begin{rem}
	Note that the above definitions and results could have been given with the set of all balls $B \subseteq \rn$ replaced by the set of all cubes $Q \subseteq \rn$.
\end{rem}
%\begin{defn}[\cite{Cruz2020}]\label{vweight1}
%	Let $p_i \in \P$, $i=1,\ldots,m$, with $\frac{1}{{p( \cdot )}} = \sum\limits_{i = 1}^m {\frac{1}{{{p_i}( \cdot )}}}$,  $\vec{\omega}=(\omega_1,\ldots,\omega_m)$, and $\omega  = \prod\limits_{i = 1}^m {{\omega _i}}$. We say $\vec{\omega} \in A_{\vec{p}(\cdot)}$, if it satisfies
%	\begin{align*}
%	{\left[ {\vec \omega } \right]_{{A_{\vec p( \cdot )}}}}: = \mathop {\sup }\limits_{B \subseteq \rn} {\left| B \right|^{ - m}}{\left\| {\omega {\chi _B}} \right\|_{{L^{p( \cdot )}}}}\prod\limits_{i = 1}^m {{{\left\| {\omega _i^{ - 1}{\chi _B}} \right\|}_{{L^{{p_i}^\prime ( \cdot )}}}}}  < \infty.
%	\end{align*}
%\end{defn}
The multilinear fractional-type operator ${{{\mathscr T}_{\alpha }}}$ can be either the multilinear fractional maximal operator ${{{\mathscr M}_{\alpha }}}$ or the multilinear fractional integral operator ${{{\mathscr I}_{\alpha }}}$. Weighted estimates of this operator have been established by Moen \cite{Moen} (2009), Chen and Xue \cite{Chen-Xue} (2010), and Chen and Wu \cite{chen-wu} (2013).

\hspace{-20pt}{\bf Theorem A} (\cite{Moen,Chen-Xue}).\label{FRLB1} {\it\
Let $1 < p_1, \ldots, p_m < \infty$,
	$\frac{1}{p} = \frac{1}{p_1} + \cdots + \frac{1}{p_m}$, $ \frac{1}{p} -\frac{1}{q} = \frac{\alpha}{n} \in (0,m)$, and $\vec \omega$ is a multiple weight with $\omega :=\prod\limits_{i = 1}^m {{\omega _i}}$. Then ${{{\mathscr T}_{\alpha }}}$ is bounded from ${L^{{p_1}}}({\omega _1}^{p_1}) \times  \cdots  \times {L^{{p_m}}}({\omega _m}^{p_m})$ to ${L^{q}}({\omega}^{q})$ if and only if $\vec \omega  \in {A_{\vec p,q}}$.
}

\hspace{-20pt}{\bf Theorem B} (\cite{Moen,Chen-Xue,chen-wu})\label{FRLB2}.{\it\
Let $1 < p_1, \ldots, p_m < \infty$,
	$\frac{1}{p} = \frac{1}{p_1} + \cdots + \frac{1}{p_m}$,
$ \frac{1}{p} -\frac{1}{q} = \frac{\alpha}{n} \in (0,m)$, and $\vec b \in {(\rm BMO)^m}$. Then for $\vec{\omega}\in A_{\vec{p}, q}$, 
	\begin{align}
		{\left\|\T_{\alpha}(\vec{f})\right\|}_{L^{q}(\omega^{q})} &\lesssim \prod_{i =1}^m{\big\|f_i\big\|}_{L^{p_i}(\omega_i^{p_i})};\notag\\
		{\left\| {{\T_{\alpha,\Pi \vec b}}(\vec f)} \right\|_{{L^q}(\omega^q)}} &\lesssim\prod\limits_{i = 1}^m {{{\left\| {{b_i}} \right\|}_{\rm BMO}}{{\left\| {{f_i}} \right\|}_{{L^{{p_i}}}(\omega _i^{{p_i}})}}};\notag\\
		{\left\| {\T_{\alpha,\sum {\vec b} }^i(\vec f)} \right\|_{{L^q}(\omega^q)}}&\lesssim{\left\| {{b_i}} \right\|_{\rm BMO}}\prod\limits_{i = 1}^m {{{\left\| {{f_i}} \right\|}_{{L^{{p_i}}}(\omega _i^{{p_i}})}}}.
	\end{align}
}
The following characterizations for variable weights $A_{p(\cdot)}$, $A_{p(\cdot),q(\cdot)}$, and $A_{\vec p(\cdot)}$ by maximal operators inspire us to study our main results.
 
In 2012, Cruz-Uribe, Fiorenza, and Neugebauer \cite{Cruz2012} firstly studied the characterization of $A_{p(\cdot)}$ by maximal operators $M$.

\hspace{-20pt}{\bf Theorem C} (\cite{Cruz2012}).\label{MAP} 
{\it\
Let $p( \cdot ) \in LH \cap \P$ and $w$ is a weight. Then $M$ is bounded on ${L^{p( \cdot )}}(\omega )$ if and only if $\omega \in A_{p( \cdot )}$.
}

In 2014, Bernardis, Dalmasso, and Pradolini \cite{Ber2014} characterized $A_{p(\cdot),q(\cdot)}$ by fractional maximal operators $M_\alpha$. 
 
\hspace{-20pt}{\bf Theorem D} (\cite{Ber2014}).\label{Apq} 
{\it\
Let $p( \cdot ) \in LH\cap \P$, $\frac{{{\alpha}}}{n}=\frac{1}{{{p}( \cdot )}}-\frac{1}{{{q}( \cdot )}} \in (0,1)$, and $\omega$ is a weight. Then $M_\alpha$ is bounded from ${{L^{p( \cdot )}}(\omega )}$ to ${{L^{q( \cdot )}}(\omega )}$ if and only if $\omega \in {A_{p( \cdot ),q( \cdot )}}$.
}

In 2020, Cruz-Uribe and Guzmán \cite{Cruz2020} demonstrated the following characterizations of $A_{\vec p(\cdot)}$ by multilinear maximal operators $\M$. 

\hspace{-20pt}{\bf Theorem E} (\cite{Cruz2020}).\label{m-M} 
{\it\
Let $p_i( \cdot ) \in LH\cap \P$, \( i = 1, \ldots, m \), with \( \frac{1}{p(\cdot)} = \sum_{i=1}^m \frac{1}{p_i(\cdot)} \), and \(\vec{\omega} = (\omega_1, \ldots, \omega_m)\) is a multiple weight with \(\omega = \prod_{i=1}^m \omega_i\). Then \(\mathscr{M}\) is  bounded from \( L^{p_1(\cdot)}(\omega_1) \times \cdots \times L^{p_m(\cdot)}(\omega_m) \) to \( L^{p(\cdot)}(\omega) \) if and only if \(\vec{\omega} \in A_{\vec{p}(\cdot)}\).}

It is natural to consider whether $A_{\vec p(\cdot),q(\cdot)}$ can be characterized by multilinear fractional-type operators $\T_\alpha$? The answer to this question is yes. To be more precise, one can see Theorems \ref{FMVLP} and \ref{FIVLP} below.

Throughout this paper, we use the following notations:

For a positive constant \( C \), independent of the relevant parameters, we write \( A \lesssim B \) to mean \( A \leq C B \), and \( A \approx B \) to indicate both \( A \lesssim B \) and \( B \lesssim A \). When \( C \) depends on \( \alpha \) and \( \beta \), we denote this by \( A \lesssim_{\alpha, \beta} B \).

Let \( E \subseteq X \) be an open set, and let \( p(\cdot): E \to [1, \infty) \) be a measurable function. The conjugate exponent \( p'(\cdot) \) is defined by
$
p'(\cdot) = \frac{p(\cdot)}{p(\cdot) - 1}.$

A weight \( \omega: X \to [0, \infty] \) is a locally integrable function such that \( 0 < \omega(x) < \infty \) for almost every \( x \in X \). The associated measure is \( d\omega(x) = \omega(x) d\mu(x) \), and we define the weighted measure of a set \( E \subseteq X \) as \( \omega(E) = \int_E \omega(x) \, d\mu(x) \).

For a set \( E \subseteq X \), a function \( f \), and a weight \( \sigma \), the weighted average is given by
\[
\avf_{\sigma, E} = \frac{1}{\sigma(E)} \int_E f \sigma \, d\mu.
\]
When \( \sigma = 1 \), this simplifies to \( \avf_E \).

The paper is organized as follows: In Section \ref{cha.mApq}, we present the first main result, which characterizes the new fractional-type variable multiple weights \( A_{\vec{p}(\cdot), q(\cdot)} \) and provides weighted estimates for multilinear fractional-type operators and their commutators. In Section \ref{cha.maApq}, we introduce and characterize the new fractional-type variable exponent matrix weights \( \mathbb{A}_{p(\cdot), q(\cdot)} \), which form the second main result. The complete proof of Theorem \ref{FMVLP} is given in Section \ref{longproof}.

% The paper is organized as follows. In Section $\ref{sec2}$, we give some some preliminaries involved weights, BMO, and some basic results, which will play a crucial role in the sequel. In Section $\ref{frac}$, we build the characterizations of the multiple weights ${A_{\vec p( \cdot ),q( \cdot )}}$, and also give the weighted estimates of multilinear fractional-type operators and their commutators. 
% %In Section $\ref{B-m-CZ}$, we obtain the boundedness of $\mathscr B$-valued multilinear $\theta$-type Calder\'on-Zygmund operators and multilinear pseudo-differential operators on variable exponents function spaces.
% The fractional-type variable exponents matrix weights ${{\mathbb{A}}_{p( \cdot ),q( \cdot )}}$ are characterized in Section $\ref{matrix}$.
% In Section $\ref{proof}$, we present the whole proof of Theorem \ref{FMVLP}.

\section{\bf New variable multiple weights $A_{\vec{p}(\cdot),q(\cdot)}$}\label{cha.mApq}
\subsection {Some classical results}
~

We first give some useful lemmas for variable exponent spaces.

\begin{lem}[\cite{Cruz2013}, Theorem 2.61]\label{lemma:fatou}
Let $p(\cdot) \in \P_0$. If $f \in L^{p(\cdot)}$ and ${f_k}$ converges to $f$ pointwise almost everywhere, then
$$
\|f\|_{L^{p(\cdot)}} \leq \liminf _{k \rightarrow \infty}\left\|f_k\right\|_{L^{p(\cdot)}}
$$
\end{lem}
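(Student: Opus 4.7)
\medskip

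\noindent\textbf{Proof proposal for Lemma \ref{lemma:fatou}.}

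The plan is to reduce the variable-exponent Fatou inequality to the classical Fatou lemma applied to the modular $\rho_{p(\cdot)}(g) := \int_{\R^n} |g(x)|^{p(x)}\,dx$, exploiting the definition of the Luxemburg norm. Denote $L := \liminf_{k\to\infty}\|f_k\|_{L^{p(\cdot)}}$. If $L = \infty$ there is nothing to prove, so assume $L < \infty$. By definition of the $\liminf$, I can pass to a subsequence (still denoted $\{f_k\}$) with $\|f_k\|_{L^{p(\cdot)}} \to L$, and for any fixed $\varepsilon > 0$, for all sufficiently large $k$ one has $\|f_k\|_{L^{p(\cdot)}} < L + \varepsilon$.

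Next I use the standard characterization of the Luxemburg norm in $\P_0$: whenever $\|g\|_{L^{p(\cdot)}} < \lambda$, we have the modular estimate $\rho_{p(\cdot)}(g/\lambda) \le 1$. Applying this with $\lambda = L + \varepsilon$ to each $f_k$ (for $k$ large) yields
\begin{equation*}
\int_{\R^n} \left(\frac{|f_k(x)|}{L+\varepsilon}\right)^{p(x)}\,dx \le 1.
\end{equation*}
Since $f_k \to f$ pointwise a.e.\ and the map $t \mapsto (t/(L+\varepsilon))^{p(x)}$ is continuous and nonnegative, the sequence $\bigl(|f_k(x)|/(L+\varepsilon)\bigr)^{p(x)}$ converges pointwise a.e.\ to $\bigl(|f(x)|/(L+\varepsilon)\bigr)^{p(x)}$. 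The classical Fatou lemma then gives
\begin{equation*}
\int_{\R^n} \left(\frac{|f(x)|}{L+\varepsilon}\right)^{p(x)}\,dx \le \liminf_{k\to\infty}\int_{\R^n} \left(\frac{|f_k(x)|}{L+\varepsilon}\right)^{p(x)}\,dx \le 1,
\end{equation*}
which by the Luxemburg definition means $\|f\|_{L^{p(\cdot)}} \le L + \varepsilon$. Letting $\varepsilon \to 0^+$ finishes the argument.

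The only delicate point I anticipate is the justification of the modular inequality $\rho_{p(\cdot)}(f_k/(L+\varepsilon)) \le 1$ in the regime $p(\cdot) \in \P_0$, since when $p_- < 1$ the modular is not a norm and the usual ``norm $\le \lambda$ implies modular $\le 1$'' implication requires the standard (but not entirely trivial) lower semicontinuity of the modular and the infimum structure of the Luxemburg norm. This is handled by the routine observation that for any $\lambda > \|g\|_{L^{p(\cdot)}}$ the defining set of the infimum contains $\lambda$, together with monotonicity of $\rho_{p(\cdot)}$; no log-Hölder or boundedness assumption on $p(\cdot)$ is needed beyond $p_+ < \infty$, which is built into $\P_0$. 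Apart from this bookkeeping, the proof is entirely a transfer of classical Fatou via the modular, so no further obstacle is expected.
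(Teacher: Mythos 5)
Your proof is correct, and it is precisely the standard argument for Fatou's lemma in Luxemburg-normed modular spaces (which is what the cited source, Cruz-Uribe and Fiorenza, Theorem 2.61, does): push the classical Fatou lemma through the modular $\rho_{p(\cdot)}$ and use the infimum structure of the Luxemburg norm. Note that the paper itself supplies no proof of this lemma, only a citation, so there is no ``paper's proof'' to diverge from. Two minor remarks: the hypothesis $f\in L^{p(\cdot)}$ is never used in your argument (the conclusion $\|f\|_{L^{p(\cdot)}}\le L+\varepsilon<\infty$ already yields membership when $L<\infty$), and the monotonicity of $\lambda\mapsto\rho_{p(\cdot)}(g/\lambda)$ that you invoke to get $\rho_{p(\cdot)}(f_k/(L+\varepsilon))\le 1$ from $\|f_k\|_{L^{p(\cdot)}}<L+\varepsilon$ holds simply because $p(x)>0$ a.e., so indeed no condition beyond $p(\cdot)\in\P_0$ is required.
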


\begin{lem}[\cite{Cruz2013}, Theorem 2.34]\label{lemma:dual}
Let $\pp \in \P$, then for every $f\in L^{p(\cdot)}$,
\[ \|f\|_{L^\pp} \approx \sup_{\|g\|_{L^\cpp} \leq 1}\int_\rn |fg|\,dx, \]
where the implicit constants depend only on $\pp$.
\end{lem}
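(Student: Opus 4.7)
The plan is to establish the equivalence by proving the two directions separately, each requiring a different method. The ``$\lesssim$'' direction is an immediate consequence of the generalized Hölder inequality for variable Lebesgue spaces, while the ``$\gtrsim$'' direction requires constructing an explicit near-extremal dual function.

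For the upper bound on the supremum, I would invoke the generalized Hölder inequality: for $\pp \in \P$ and any measurable $f, g$, one has
\[ \int_\rn |f(x) g(x)|\,dx \le K_{\pp}\,\|f\|_{L^\pp}\|g\|_{L^\cpp}, \]
where $K_{\pp}$ depends only on $\pp$. This can be obtained, for instance, from the pointwise Young inequality $|fg|\le |f|^{p(x)}/p(x) + |g|^{p'(x)}/p'(x)$ combined with the modular--norm dictionary applied to $f/\|f\|_{L^\pp}$ and $g/\|g\|_{L^\cpp}$. Taking the supremum over all $g$ with $\|g\|_{L^\cpp}\le 1$ immediately yields $\sup \le K_{\pp}\,\|f\|_{L^\pp}$.

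For the reverse direction, set $\lambda := \|f\|_{L^\pp}$ and assume without loss of generality that $0 < \lambda < \infty$ (the degenerate cases being trivial). Since $\pp\in\P$ forces $p_+<\infty$, the Luxemburg modular is continuous in its scaling parameter, so $\int_\rn (|f(x)|/\lambda)^{p(x)}\,dx = 1$. I then test against the natural candidate
\[ g(x) := \sgn(f(x))\,\bigl(|f(x)|/\lambda\bigr)^{p(x)-1}, \]
setting $g(x) = 0$ on $\{f=0\}$. Using $(p(x)-1)p'(x) = p(x)$, a direct modular computation gives
\[ \int_\rn |g(x)|^{p'(x)}\,dx = \int_\rn (|f(x)|/\lambda)^{(p(x)-1)p'(x)}\,dx = \int_\rn (|f(x)|/\lambda)^{p(x)}\,dx = 1, \]
whence $\|g\|_{L^\cpp}\le 1$. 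Simultaneously,
\[ \int_\rn |f(x) g(x)|\,dx = \lambda \int_\rn (|f(x)|/\lambda)^{p(x)}\,dx = \lambda = \|f\|_{L^\pp}, \]
so the supremum is at least $\|f\|_{L^\pp}$, completing the equivalence.

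The main (and rather mild) obstacle is justifying the modular equality $\int_\rn(|f|/\lambda)^{p(x)}\,dx = 1$ at $\lambda = \|f\|_{L^\pp}$; this fails in general when $p_+=\infty$, but is guaranteed here by monotone/dominated convergence applied to the modular, using $p_+<\infty$. The hypothesis $\pp\in\P$ supplies exactly what is needed: $p_+<\infty$ for this continuity, and $p_->1$ so that $\cpp\in\P$ and $g\in L^\cpp$ is meaningful. No deep machinery beyond the modular--norm correspondence is required.
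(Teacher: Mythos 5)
Your proof is correct and follows the standard textbook argument for the norm conjugate formula (and indeed is essentially what appears in Cruz-Uribe and Fiorenza \cite{Cruz2013}, Theorem 2.34, which the paper simply cites without proof): generalized H\"older for the upper bound, and the explicit near-extremal dual function $g = \sgn(f)(|f|/\lambda)^{p(\cdot)-1}$ together with the modular--norm equality $\rho_\pp(f/\|f\|_\pp)=1$ (valid since $p_+<\infty$) for the lower bound.
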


\begin{lem}[\cite{Cruz2013}, Theorem 2.26]\label{Holder}
Let \(\pp \in \P(\Omega)\). If \(f\) belongs to \(L^{\pp}(\Omega)\) and \(g\) is in \(\Lcpp(\Omega)\), then
 $$
\int_{\Omega}|f(x) g(x)| d x \leq 4\|f\|_{L^{p(\cdot)}}\|g\|_{L^{p'(\cdot)}} .
$$
\end{lem}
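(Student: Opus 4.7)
The plan is to establish this variable-exponent Hölder inequality by the standard modular approach, using pointwise Young's inequality for the conjugate exponents $p(x)$ and $p'(x)$. Since $\pp \in \P(\Omega)$, we have $1 < p_-(\Omega) \le p_+(\Omega) < \infty$, so $p'(\cdot)$ is well-defined and also belongs to $\P(\Omega)$. First I would dispose of trivial cases: if either norm is $0$ or $+\infty$, the inequality holds tautologically, so assume both norms are positive and finite.

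Next, using the definition of the Luxemburg norm, for any $\lambda > \|f\|_{\Lpp}$ and $\mu > \|g\|_{\Lcpp}$ we have the modular bounds
\begin{equation*}
\int_\Omega \Big(\frac{|f(x)|}{\lambda}\Big)^{p(x)} dx \le 1, \qquad \int_\Omega \Big(\frac{|g(x)|}{\mu}\Big)^{p'(x)} dx \le 1.
\end{equation*}
The core step is the pointwise Young inequality
\begin{equation*}
\frac{|f(x)|}{\lambda} \cdot \frac{|g(x)|}{\mu} \le \frac{1}{p(x)}\Big(\frac{|f(x)|}{\lambda}\Big)^{p(x)} + \frac{1}{p'(x)}\Big(\frac{|g(x)|}{\mu}\Big)^{p'(x)},
\end{equation*}
valid almost everywhere since $\frac{1}{p(x)} + \frac{1}{p'(x)} = 1$. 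Integrating over $\Omega$, bounding $\tfrac{1}{p(x)}, \tfrac{1}{p'(x)} \le 1$, and inserting the two modular estimates yields
\begin{equation*}
\frac{1}{\lambda \mu} \int_\Omega |f(x)g(x)|\,dx \le 2.
\end{equation*}

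Finally I would take $\lambda \downarrow \|f\|_{\Lpp}$ and $\mu \downarrow \|g\|_{\Lcpp}$ to obtain $\int_\Omega |fg|\,dx \le 2 \|f\|_{\Lpp}\|g\|_{\Lcpp} \le 4 \|f\|_{\Lpp}\|g\|_{\Lcpp}$. There is no real obstacle here—the argument is pure bookkeeping once Young's inequality is invoked; the slightly generous constant $4$ stated in the lemma comfortably absorbs the sharp constant $2$ from this argument and matches the constant that arises when comparing the Luxemburg norm with the associate norm of Lemma \ref{lemma:dual}, which will be needed downstream. I would include one brief remark that the normalization step requires $\|f\|_{\Lpp}, \|g\|_{\Lcpp} > 0$ so that taking $\lambda, \mu$ strictly greater than the norms makes sense, a case already covered in the initial reduction.
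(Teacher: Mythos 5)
The paper does not prove this lemma; it quotes it from the reference \cite{Cruz2013} (Theorem 2.26), where the argument is exactly the modular/Young's-inequality approach you describe. Your proposal is correct, and since $\pp\in\P$ forbids the degenerate regions $\{p=1\}$ and $\{p=\infty\}$, it in fact yields the sharper constant $2$ (indeed $\tfrac{1}{p_-}+\tfrac{1}{(p')_-}\le 2$), which comfortably implies the stated bound of $4$.
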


\begin{lem}[\cite{Gu6}, Lemma 2.1]\label{Gu6}
	Let $p( \cdot ), p_i( \cdot ) $ belong to $ {\P}_0$, $ i = 1, \ldots ,m$, and $\frac{1}{{p(\cdot)}} = \sum\limits_{i = 1}^m {\frac{1}{{{p_i}(\cdot)}}}$. Then
	\begin{equation*}
		{\left\| {{f_1} \cdots {f_m}} \right\|_{{L^{p( \cdot )}}}} \lesssim \prod\limits_{i = 1}^m {{{\left\| {{f_i}} \right\|}_{{L^{{p_i}( \cdot )}}}}}.
	\end{equation*}
\end{lem}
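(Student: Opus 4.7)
The plan is to prove the inequality by induction on $m$, reducing everything to the two-function case. For $m\ge 3$, I set $\frac{1}{\tilde p(\cdot)} = \sum_{i=2}^{m} \frac{1}{p_i(\cdot)}$ so that $\frac{1}{p(\cdot)} = \frac{1}{p_1(\cdot)} + \frac{1}{\tilde p(\cdot)}$. The inductive hypothesis applied to $f_2,\ldots,f_m$ bounds $\|f_2 \cdots f_m\|_{L^{\tilde p(\cdot)}}$ by $\prod_{i=2}^m \|f_i\|_{L^{p_i(\cdot)}}$, and the two-function case applied to $f_1$ and $f_2\cdots f_m$ closes the induction. So the whole problem reduces to the case $m=2$.

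For the two-function case with $\frac{1}{p(\cdot)} = \frac{1}{p_1(\cdot)} + \frac{1}{p_2(\cdot)}$, I cannot invoke Lemma~\ref{Holder} directly, since that lemma requires $p(\cdot) \in \P$ whereas here $p(\cdot) \in \P_0$ may drop below $1$. Instead I work at the level of the modular. Normalize so that $\|f_i\|_{L^{p_i(\cdot)}} = 1$ for $i=1,2$ and set $r(x) = p_1(x)/p(x)$, $s(x) = p_2(x)/p(x)$, for which $\frac{1}{r(x)} + \frac{1}{s(x)} = 1$ pointwise. Applying the pointwise Young inequality $ab \le a^{r(x)}/r(x) + b^{s(x)}/s(x)$ with $a = |f_1(x)|^{p(x)}$, $b=|f_2(x)|^{p(x)}$ gives
\begin{equation*}
|f_1(x) f_2(x)|^{p(x)} \le \tfrac{p(x)}{p_1(x)}\, |f_1(x)|^{p_1(x)} + \tfrac{p(x)}{p_2(x)}\, |f_2(x)|^{p_2(x)}.
\end{equation*}
Because $\frac{1}{p(x)} \ge \frac{1}{p_i(x)}$, the factors $p(x)/p_i(x)$ are bounded by $1$; integrating and using the normalization produces $\int_{\rn} |f_1 f_2|^{p(x)}\,dx \le 2$.

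To convert this modular bound into a Luxemburg-norm bound I exploit $p_+ < \infty$. If $\int |F|^{p(x)}\,dx \le C$ for some $C\ge 1$, then, using $p(x)/p_- \ge 1$,
\begin{equation*}
\int \Bigl|\tfrac{F(x)}{C^{1/p_-}}\Bigr|^{p(x)}\,dx \le \tfrac{1}{C}\int_{\rn} |F|^{p(x)}\,dx \le 1,
\end{equation*}
so $\|F\|_{L^{p(\cdot)}} \le C^{1/p_-}$. Applied with $F = f_1 f_2$ and $C = 2$, combined with homogeneity in $f_1, f_2$, this yields $\|f_1 f_2\|_{L^{p(\cdot)}} \le 2^{1/p_-} \|f_1\|_{L^{p_1(\cdot)}} \|f_2\|_{L^{p_2(\cdot)}}$, completing the two-function case with an implicit constant depending only on $p_-$; the constant in the final statement therefore depends only on $m$, $p_-$, and $p_+$.

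There is no deep obstacle here. The only real subtlety is to avoid applying Lemma~\ref{Holder} outside its stated range $\P$: once one commits to working at the modular level via the pointwise weighted Young inequality, the rest is bookkeeping involving only the trivial inequality $p(x)/p_i(x) \le 1$ and the standard modular-to-norm conversion.
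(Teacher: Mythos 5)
Your argument is correct and complete. The paper does not include a proof of this lemma — it is taken verbatim from the cited reference \cite{Gu6}, Lemma 2.1 — so there is no in-paper proof to compare against. Your inductive reduction to the bilinear case is sound (note that $\tilde p(\cdot)$ stays in $\P_0$: $(\tilde p)_+ \le \min_{i\ge 2}(p_i)_+<\infty$ and $(\tilde p)_- \ge (\sum_{i\ge 2} 1/(p_i)_-)^{-1}>0$), and the bilinear case is handled by a clean pointwise Young inequality with the conjugate pair $r(x)=p_1(x)/p(x)$, $s(x)=p_2(x)/p(x)$, which are strictly greater than $1$ since each $p_i(x)<\infty$. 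The resulting modular bound $\int|f_1f_2|^{p(x)}\,dx\le 2$ and the conversion $\|F\|_{\Lp}\le C^{1/p_-}$ whenever $\rho_p(F)\le C$ with $C\ge 1$ are both correct (the latter is just the elementary direction of Lemma~\ref{lemma:mod-norm}).

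One point worth emphasizing: you correctly avoid the duality-based Hölder inequality (Lemma~\ref{Holder}), which requires $p(\cdot)\in\P$; since the present lemma only assumes $p(\cdot)\in\P_0$, the modular/Young route is the right one, and it is in fact the standard technique in the variable-exponent literature for exactly this reason. Your final constant $2^{1/p_-}$ in the bilinear case (accumulating to a constant depending on $m$ and the $(p_i)_\pm$ after the induction) is admissible for the stated $\lesssim$.
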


\begin{lem} [\cite{Cruz2013}, Corollary 2.23] \label{lemma:mod-norm}
Let $\pp \in \P_1$.  If
	$\|f\|_\pp>1$, then
	\[ \rho_\pp(f)^{\frac{1}{p_+}}\leq \|f\|_\pp \leq 
	\rho_\pp(f)^{\frac{1}{p_-}}. \]
	If
	$\|f\|_\pp\leq 1$, then
	\[ \rho_\pp(f)^{\frac{1}{p_-}}\leq \|f\|_\pp \leq 
	\rho_\pp(f)^{\frac{1}{p_+}}. \]
 Hence, $\|f\|_\pp \lesssim 1$ if and only if $\rho_p(f) \lesssim 1$. 
\end{lem}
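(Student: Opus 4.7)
The plan is to combine two basic ingredients: the pointwise homogeneity inequality $\min(t^{p_-},t^{p_+})\le t^{p(x)}\le \max(t^{p_-},t^{p_+})$ valid for all $t>0$ and a.e.\ $x$, and the unit-ball property $\rho_\pp(f/\|f\|_\pp)\le 1$, which extracts the infimum in the definition of the Luxemburg norm. Integrating the pointwise inequality against $|f(x)|^{p(x)}\,dx$ yields the two-sided modular-scaling bound
\[
 \min(t^{p_-},t^{p_+})\,\rho_\pp(f)\ \le\ \rho_\pp(tf)\ \le\ \max(t^{p_-},t^{p_+})\,\rho_\pp(f),\qquad t>0.
\]
The unit-ball property follows by choosing $\lambda_k\searrow\|f\|_\pp$ with $\rho_\pp(f/\lambda_k)\le 1$ and applying Fatou's lemma (in the form of Lemma \ref{lemma:fatou}, or directly to the pointwise convergent integrands $(|f|/\lambda_k)^{p(x)}$).

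With these in hand, the four inequalities reduce to algebra. In the regime $\|f\|_\pp>1$, set $\lambda=\|f\|_\pp$, so $1/\lambda<1$. The lower scaling estimate applied with $t=1/\lambda$ gives $\lambda^{-p_+}\rho_\pp(f)\le \rho_\pp(f/\lambda)\le 1$, i.e.\ $\rho_\pp(f)^{1/p_+}\le \|f\|_\pp$. For the matching upper estimate, I would first observe that $\rho_\pp(f)>1$: otherwise $\lambda=1$ would be admissible in the defining infimum and force $\|f\|_\pp\le 1$, a contradiction. Hence $\mu:=\rho_\pp(f)^{1/p_-}>1$, and the upper scaling estimate with $t=1/\mu<1$ yields $\rho_\pp(f/\mu)\le \mu^{-p_-}\rho_\pp(f)=1$, which by definition of the norm forces $\|f\|_\pp\le \mu=\rho_\pp(f)^{1/p_-}$.

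The regime $\|f\|_\pp\le 1$ is handled by the mirror argument: one first verifies $\rho_\pp(f)\le 1$ by choosing $\lambda_k\searrow\|f\|_\pp\le 1$ and applying monotone convergence to $(|f|/\lambda_k)^{p(x)}$, then plugs $\lambda=\|f\|_\pp$ into the scaling estimates with $1/\lambda\ge 1$ to swap the roles of $p_-$ and $p_+$ relative to Case 1, obtaining first $\rho_\pp(f)\le \|f\|_\pp^{p_-}$ from $\|f\|_\pp^{-p_-}\rho_\pp(f)\le \rho_\pp(f/\|f\|_\pp)\le 1$, and then $\|f\|_\pp\le \rho_\pp(f)^{1/p_+}$ by setting $\mu:=\rho_\pp(f)^{1/p_+}\le 1$ and checking $\rho_\pp(f/\mu)\le \mu^{-p_+}\rho_\pp(f)=1$. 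The final "Hence" statement is an immediate consequence: if $\|f\|_\pp\le C$ then $\rho_\pp(f)\le \max(1,C^{p_+})$, and if $\rho_\pp(f)\le C$ then $\|f\|_\pp\le \max(1,C^{1/p_-})$, giving the claimed quantitative equivalence. No step is a genuine obstacle; the only subtlety is the unit-ball property at the extremal value $\lambda=\|f\|_\pp$, which is exactly where Fatou's lemma earns its keep, together with the unavoidable case split at the threshold $\|f\|_\pp=1$ where the roles of $p_-$ and $p_+$ exchange.
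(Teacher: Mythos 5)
The paper does not prove this lemma; it is cited from Cruz-Uribe and Fiorenza's book (Corollary 2.23). Your argument is correct and is essentially the standard textbook proof: the pointwise bound $\min(t^{p_-},t^{p_+})\le t^{p(x)}\le\max(t^{p_-},t^{p_+})$ gives the two-sided modular scaling estimate, the closedness of the Luxemburg infimum ($\rho_\pp(f/\|f\|_\pp)\le 1$) follows from monotone convergence, and the four inequalities then drop out by choosing $t=1/\|f\|_\pp$ or $t=1/\rho_\pp(f)^{1/p_\pm}$ in the appropriate regime. One small point worth making explicit for a fully self-contained proof: the degenerate cases $f=0$ (where all quantities vanish) and $\|f\|_\pp=\infty$ (where the scaling bound $\rho_\pp(f/\lambda)\le\lambda^{-p_-}\rho_\pp(f)$ for $\lambda\ge 1$ forces $\rho_\pp(f)=\infty$ as well) should be dispatched first, after which your main argument applies verbatim.
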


\begin{lem}[\cite{Cruz2013}, Lemma 3.4]\label{lemma:Ap-Ainf}
Let $\pp \in LH \cap \P_1$. If $\omega \in A_\pp$, then $u( \cdot ) = \omega {( \cdot )^{p( \cdot )}} \in A_\infty$.
\end{lem}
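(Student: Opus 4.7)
The plan is to show directly that $u(\cdot) = \omega(\cdot)^{p(\cdot)}$ belongs to a classical Muckenhoupt class $A_{r}$ for some constant $r \in (1,\infty)$, from which $u \in A_\infty$ follows by Definition \ref{RH1}. The mechanism linking the variable-exponent $A_{p(\cdot)}$ condition to a constant-exponent $A_r$ condition is the log-Hölder continuity of $p(\cdot)$, which enables a quantitatively controlled passage between Luxemburg norms and modulars on every ball.

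First I would fix an arbitrary ball $B$ and choose a representative local constant exponent $p_B$: take $p_B := p_-(B)$ (or $p_+(B)$) when $B$ lies inside a fixed compactum, and $p_B := p_\infty$ when $B$ lies in its complement. The local log-Hölder condition gives $|B|^{1/p_-(B) - 1/p_+(B)} \lesssim 1$ in the first regime, and the decay condition at infinity makes $p(\cdot)$ essentially equal to $p_\infty$ in the second. Applying Lemma \ref{lemma:mod-norm} on $B$ in both the $\|\cdot\|_{L^{p(\cdot)}} \ge 1$ and $\le 1$ cases, these log-Hölder corrections can be absorbed to yield
\[
\|\omega\chi_B\|_{L^{p(\cdot)}} \approx u(B)^{1/p_B}, \qquad \|\omega^{-1}\chi_B\|_{L^{p'(\cdot)}} \approx \sigma(B)^{1/p_B'},
\]
where $\sigma := \omega^{-p'(\cdot)}$ and the implicit constants depend only on $p_\pm$ and the log-Hölder data. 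The dual estimate is legitimate because of the paper's noted equivalence $p(\cdot)\in LH\cap\P \iff p'(\cdot)\in LH\cap\P$, which ensures the same machinery applies to $p'(\cdot)$.

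Second, inserting these equivalences into $[\omega]_{A_{p(\cdot)}} < \infty$, splitting $|B|^{-1} = |B|^{-1/p_B}|B|^{-1/p_B'}$, and raising the resulting inequality to the $p_B$-th power produces a bound of the form
\[
\left(\frac{u(B)}{|B|}\right)\left(\frac{\sigma(B)}{|B|}\right)^{p_B - 1} \lesssim [\omega]_{A_{p(\cdot)}}^{p_B},
\]
uniform in $B$. After verifying that $\sigma = \omega^{-p'(\cdot)}$ is comparable on $B$ to $u^{-1/(p_B - 1)}$ (again up to log-Hölder correction factors), this is the classical $A_{p_B}$ condition for $u$. Since $p_B \le p_+$ for every ball, the inclusion $A_{p_B} \subset A_{p_+}$ places $u$ in $A_{p_+}$, hence in $A_\infty$ by Definition \ref{RH1}.

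The main obstacle is maintaining uniformity across scales. For small balls, the conversion constants between Luxemburg norm and modular are controlled by the local log-Hölder condition, but for large balls one must instead use the decay of $p(\cdot)$ toward $p_\infty$, and the two regimes force slightly different choices of $p_B$. The genuinely delicate step is verifying that $\omega^{-p'(\cdot)}$ and $u^{-1/(p_B - 1)}$ are comparable on $B$ with constants independent of $B$ — this reduces to bounding $|B|^{1/p_-(B) - 1/p_+(B)}$ and its $p'$-analogue simultaneously in both regimes. Once that uniformity is pinned down, the algebraic rearrangement to a classical $A_{p_+}$ inequality is routine.
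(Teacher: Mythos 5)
This lemma is quoted from \cite{Cruz2013} (Lemma~3.4) and the present paper does not reprove it, so there is no in-paper argument to compare against; I will evaluate your proposal on its own merits. Your high-level strategy --- pass from the variable $A_{p(\cdot)}$ condition to a constant $A_r$ condition by choosing a local representative exponent $p_B$, then use $A_{p_B}\subseteq A_{p_+}\subseteq A_\infty$ --- is natural, and the final algebraic step (the per-ball $A_{p_B}$ bound implies the $A_{p_+}$ bound via Jensen, since $1/(p_B-1)\ge 1/(p_+-1)$) is fine. However, the central pair of equivalences
\[
\|\omega\chi_B\|_{L^{p(\cdot)}}\approx u(B)^{1/p_B},\qquad
\|\omega^{-1}\chi_B\|_{L^{p'(\cdot)}}\approx \sigma(B)^{1/p_B'},
\]
together with the asserted comparability of $\sigma(B)=\int_B\omega^{-p'(x)}\,dx$ with $\int_B u(x)^{-1/(p_B-1)}\,dx$, are precisely the hard technical content of the result, and your sketch does not establish them. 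Lemma~\ref{lemma:mod-norm} alone only yields bounds with the \emph{global} exponents $p_\pm$; upgrading to a local $p_B$ requires controlling $u(B)^{1/p_-(B)-1/p_+(B)}$ (and its dual), and this control does \emph{not} reduce to bounding $|B|^{1/p_-(B)-1/p_+(B)}$ as you claim. The unweighted Diening estimate (Lemma~\ref{lemma:diening}) controls powers of $|B|$, not of $u(B)$ or $\|\omega\chi_B\|_{p(\cdot)}$; what is actually needed is the \emph{weighted} Diening estimate (Lemma~\ref{lemma:lemma3.3}), applied both to $\omega$ in $L^{p(\cdot)}$ and to $\omega^{-1}$ in $L^{p'(\cdot)}$, and you never invoke it.

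Two further points. First, your stated constants ``depend only on $p_\pm$ and the log-Hölder data'' cannot be right: the weighted Diening estimate, hence the norm--modular comparability at a local exponent, necessarily involves $[\omega]_{A_{p(\cdot)}}$ (compare the statement of Lemma~\ref{lemma:p-infty-cond}, whose constants depend on $\omega$). Second, your dichotomy of regimes --- small balls in a compactum versus the rest --- is not the operative one. The correct split, as in Lemma~\ref{lemma:p-infty-cond}, is governed by whether $\|\omega\chi_Q\|_{p(\cdot)}\ge 1$ (equivalently $u(Q)\ge 1$) or not, which does not coincide with a geometric small/large dichotomy for an arbitrary weight. If you instead try to import the needed equivalence directly from Lemma~\ref{lemma:p-infty-cond}, you run a real risk of circularity, since that lemma (stated after Lemma~3.4 in \cite{Cruz2013}) is proved with the $A_\infty$ membership of $u$ already in hand. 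So the proposal has the right ingredients in view but leaves exactly the step that makes the lemma nontrivial unproved, and the heuristic it offers for filling that step would not succeed as stated.
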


\begin{lem} [\cite{Cruz2013}, Lemma 3.5, 3.6]\label{lemma:p-infty-cond}
Let $\pp \in LH \cap \P_1$, $\omega \in A_\pp$, and
{$u(x)=\omega(x)^{p(x)}$}. For any cube $Q$ such that
	$\|\omega\chi_Q\|_\pp \geq 1$, then
	$\|\omega\chi_Q\|_\pp \approx u(Q)^{\frac{1}{p_\infty}}$.  Moreover,
	given any $E\subset Q$,
	$$
\frac{|E|}{|Q|} \lesssim\left(\frac{u(E)}{u(Q)}\right)^{\frac{1}{p_{\infty}}}
$$
	The implicit constants depend only on $\omega$ and $\pp$.
\end{lem}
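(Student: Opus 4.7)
The plan is to establish both assertions by trading $p(x)$ for $p_\infty$ inside the modular $\int_Q \omega(x)^{p(x)}\,dx$, an exchange powered by the log-Hölder decay $|p(x)-p_\infty| \leq C/\log(e+|x|)$ and disciplined by the $A_\pp$ hypothesis together with Lemma \ref{lemma:Ap-Ainf}.

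\emph{Step 1 (norm--modular equivalence).} Set $t := \|\omega\chi_Q\|_\pp \geq 1$. By the Luxemburg definition and continuity of the modular,
\[
\int_Q \omega(x)^{p(x)}\,t^{-p(x)}\,dx = 1,
\]
so the target equivalence $t \approx u(Q)^{1/p_\infty}$ reduces to showing that $t^{-p(x)}$ may be replaced by $t^{-p_\infty}$ uniformly on $Q$, up to multiplicative constants. For cubes whose center lies far from the origin, the $A_\pp$ upper bound on $t$ forces $\log t \lesssim \log(e+|x|)$ uniformly in $x\in Q$, and the global log-Hölder inequality then gives $t^{|p(x)-p_\infty|} \lesssim 1$, delivering the trade. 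For cubes near the origin I split $Q$ against a fixed ball $B(0,R)$: the exterior part is handled as above, while on the interior part I invoke the local log-Hölder condition to compare $p(x)$ with $p(x_Q)$, and then absorb the residue $|p(x_Q)-p_\infty|$ by using that $u\in A_\infty$ (Lemma \ref{lemma:Ap-Ainf}) and the associated small-cube comparisons.

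\emph{Step 2 (set-ratio inequality).} For $E \subseteq Q$, Hölder's inequality (Lemma \ref{Holder}) combined with the $A_\pp$ condition yields
\[
\frac{|E|}{|Q|} \;\lesssim\; \frac{\|\omega\chi_E\|_\pp}{\|\omega\chi_Q\|_\pp}.
\]
If $\|\omega\chi_E\|_\pp \geq 1$, Step 1 applies to both $E$ and $Q$ and the desired bound follows at once. Otherwise, Lemma \ref{lemma:mod-norm} bounds the numerator by a power of $u(E)=\rho_\pp(\omega\chi_E)$, and Lemma \ref{lemma:Ainfty-prop} (applied to $u\in A_\infty$) repackages $u(E)/u(Q)$ with the desired exponent $1/p_\infty$ modulo an absolute constant depending only on $[\omega]_{A_\pp}$.

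\emph{Main obstacle.} The technical heart is the trade $p(x)\leadsto p_\infty$ for cubes near the origin, where $|p(x)-p_\infty|$ need not be small and the log-Hölder decay at infinity alone is too weak. The argument must interleave the local log-Hölder continuity, the $A_\pp$ upper bound on $t$, and the $A_\infty$ self-improvement from Lemma \ref{lemma:Ap-Ainf}; keeping all implied constants uniform as $Q$ ranges over cubes of every scale and position is the step that requires the most care.
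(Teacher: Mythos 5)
This lemma is imported verbatim from \cite{Cruz2013} (Lemmas 3.5 and 3.6) and the paper gives no proof of it, so there is no in-paper argument to compare against; I evaluate your sketch on its own terms.

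Your Step~1 is pointed in the right direction (pass from the unit-modular identity $\int_Q(\omega/t)^{p(x)}dx=1$ to the exponent $p_\infty$ using log-H\"older decay), and the near-origin/far-origin case split is the right shape; however the claim that the $A_\pp$ condition ``forces $\log t \lesssim \log(e+|x|)$'' is exactly the technical estimate you would need to supply, and the tools for it in this setting are the quantitative Diening-type bound $\|\omega\chi_Q\|_\pp^{p_-(Q)-p_+(Q)}\lesssim 1$ (Lemma~\ref{lemma:lemma3.3}) and the integrability estimate $\int \omega^{p(x)}(e+|x|)^{-tnp_-}\,dx\le 1$ (Lemma~\ref{lemma:infty-bound}), neither of which your sketch invokes. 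The appeal to ``$A_\infty$ self-improvement from Lemma~\ref{lemma:Ap-Ainf}'' to absorb the residue $|p(x_Q)-p_\infty|$ is not an argument; $A_\infty$ of $u$ gives no pointwise control on the exponent function.

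The more serious problem is in Step~2, in the case $\|\omega\chi_E\|_\pp<1$. After the (correct) reduction $|E|/|Q|\lesssim \|\omega\chi_E\|_\pp/\|\omega\chi_Q\|_\pp$ and the identification $\|\omega\chi_Q\|_\pp\approx u(Q)^{1/p_\infty}$, what you actually need is the one-sided bound $\|\omega\chi_E\|_\pp\lesssim u(E)^{1/p_\infty}$. Lemma~\ref{lemma:mod-norm} gives $\|\omega\chi_E\|_\pp\le u(E)^{1/p_+}$, and since $u(E)<1$ and $1/p_+\le 1/p_\infty$, we have $u(E)^{1/p_+}\ge u(E)^{1/p_\infty}$; this inequality goes the \emph{wrong} way and cannot be ``repackaged'' to produce the exponent $1/p_\infty$. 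Your proposed fix via Lemma~\ref{lemma:Ainfty-prop} does not help: that lemma is a purely qualitative $A_\infty$ comparison (``$|E|$ not too small implies $u(E)$ not too small,'' with unnamed constants $\beta,\delta$) and yields no power law at all, let alone the specific exponent $1/p_\infty$. You would instead need to run the $p(x)\leadsto p_\infty$ exchange directly at the scale $s:=u(E)^{1/p_\infty}$ inside the modular $\int_E(\omega/s)^{p(x)}dx$, which requires a nontrivial control of $s^{p_\infty-p(x)}$ even when $s<1$; this is precisely where the $A_\pp$ hypothesis (through Lemmas~\ref{lemma:lemma3.3} and \ref{lemma:infty-bound}) must enter, and your sketch neither identifies this difficulty nor supplies the ingredients for it. Also, a minor point: in the case $\|\omega\chi_E\|_\pp\ge 1$ you say ``Step~1 applies to both $E$ and $Q$,'' but Step~1 as stated is for cubes, and $E$ is an arbitrary measurable subset; the modular argument does extend, but this must be said, not assumed.
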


\begin{rem}
	To apply Lemma~\ref{lemma:p-infty-cond}, note that by
	Lemma~\ref{lemma:mod-norm}, $\|\omega\chi_Q\|_\pp \geq 1$ if and only if
	$u(Q) \geq 1$. 
\end{rem}

\begin{lem}[\cite{Cruz2013}, Lemma 3.24]\label{lemma:diening}
	Let $\pp \in LH_0 \cap \P_0$.
	For any cube \( Q \),
$$
1 \lesssim |Q|^{p_{+}(Q)-p_{-}(Q)},
$$
where the implicit constant is determined solely by \(\pp\) and \(n\). Moreover, this inequality remains valid if \(p_+(Q)\) or \(p_-(Q)\) is replaced with \(p(x)\) for almost every \(x\) in \(Q\).
\end{lem}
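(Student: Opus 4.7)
The plan is to convert the local log-Hölder continuity encoded in $LH_0$ into a bound on the oscillation $p_+(Q) - p_-(Q)$ that kills any negative power of $|Q|$. Let $\ell$ denote the side length of $Q$, so that $|Q| = \ell^n$ and $\diam(Q) = \sqrt{n}\,\ell$.

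First I would treat small cubes with $\ell < 1/(2\sqrt{n})$. For a.e.\ $x, y \in Q$ one has $|x-y| \leq \sqrt{n}\,\ell < 1/2$, so the local log-Hölder condition gives
$$|p(x)-p(y)| \leq \frac{C}{-\log|x-y|} \leq \frac{C}{-\log(\sqrt{n}\,\ell)}.$$
Passing to essential sup/inf, this yields $p_+(Q) - p_-(Q) \leq C/(-\log(\sqrt{n}\,\ell))$. Multiplying by the negative quantity $\log\ell$ (so the inequality flips) gives
$$\log\bigl(|Q|^{p_+(Q)-p_-(Q)}\bigr) = n\bigl(p_+(Q)-p_-(Q)\bigr)\log\ell \geq \frac{nC\log\ell}{-\log(\sqrt{n}\,\ell)},$$
and since the ratio $\log\ell/\log(\sqrt{n}\,\ell)$ stays bounded on $\ell \in (0, 1/(2\sqrt{n}))$ (tending to $1$ as $\ell \to 0^+$), the right-hand side is bounded below by a constant depending only on $n$. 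Exponentiating yields the claim in this range.

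Next I would dispose of large cubes with $\ell \geq 1/(2\sqrt{n})$. Here $p_+(Q) - p_-(Q)$ is non-negative and controlled by the finite constant $p_+ - p_-$ (from $\pp \in \P_0$). If $|Q| \geq 1$ the bound is trivial; if $1/(2\sqrt{n}) \leq \ell < 1$, then $|Q|^{p_+(Q)-p_-(Q)} \geq (2\sqrt{n})^{-n(p_+-p_-)}$, a dimensional constant.

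For the ``moreover'' clause I would use that $p_-(Q) \leq p(x) \leq p_+(Q)$ for a.e.\ $x \in Q$, so both $p_+(Q)-p(x)$ and $p(x)-p_-(Q)$ lie in the interval $[0, p_+(Q)-p_-(Q)]$. When $|Q| \geq 1$ the conclusion is immediate; when $|Q| < 1$, raising $|Q|$ to a smaller non-negative power only enlarges it, so
$$|Q|^{p_+(Q)-p(x)} \geq |Q|^{p_+(Q)-p_-(Q)} \gtrsim 1,$$
and the bound with $p(x)-p_-(Q)$ in place of $p_+(Q)-p_-(Q)$ follows identically. No real obstacle arises beyond carefully tracking the dimensional factor $\sqrt{n}$ when passing between $|x-y|$, $\diam(Q)$, and $\ell$; the heart of the argument is the single log-Hölder estimate applied at the scale of the cube.
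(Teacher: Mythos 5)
Your proof is correct, and it is the standard argument (the paper cites this as \cite[Lemma~3.24]{Cruz2013} without reproducing a proof). The only remark worth making is cosmetic: in the key step you write ``the ratio $\log\ell/\log(\sqrt{n}\,\ell)$ stays bounded'' and you observe it tends to $1$; it is worth noting explicitly that on $\ell\in(0,1/(2\sqrt{n}))$ both logarithms are negative, so the ratio equals $t/(t-\tfrac12\log n)$ with $t=-\log\ell>\log(2\sqrt{n})$, which is monotone decreasing from $1+\tfrac{\log n}{2\log 2}$ to $1$ and hence bounded by a constant depending only on $n$; this is exactly what your inequality needs, so the argument closes cleanly. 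The large-cube case, the monotonicity of $t\mapsto|Q|^t$ for $|Q|<1$, and the reduction of the ``moreover'' clause to the sandwich $p_-(Q)\le p(x)\le p_+(Q)$ are all handled correctly.
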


The following lemma presents a weighted version of the Diening condition, as outlined in Lemma~\ref{lemma:diening}, which facilitates the application of the \( LH_0 \) condition.

\begin{lem}[\cite{Cruz2012}, Lemma 3.3] \label{lemma:lemma3.3}
	Let \(\pp \in LH \cap \P_1\). If \(\omega \in A_\pp\), then for all cubes \(Q\), we have
	$$
\left\|\omega \chi_Q\right\|_{p(\cdot)}^{p_{-}(Q)-p_{+}(Q)} \lesssim 1
$$
	the implicit constant in our calculations depends only on \(\pp\) and \(\omega\).
\end{lem}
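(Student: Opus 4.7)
Set $A:=\|\omega\chi_Q\|_{L^{p(\cdot)}}$ and $B:=\|\omega^{-1}\chi_Q\|_{L^{p'(\cdot)}}$. Since $A>0$ and $p_{-}(Q)-p_{+}(Q)\leq 0$, the claim $A^{p_{-}(Q)-p_{+}(Q)}\lesssim 1$ is equivalent to the lower bound $A^{p_{+}(Q)-p_{-}(Q)}\gtrsim 1$. In the easy case $A\geq 1$, the exponent is nonnegative and $A$ is at least $1$, so the bound is immediate; hence I may restrict to the regime $A<1$ for the remainder of the plan.

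In this regime I couple $A$ with $B$. H\"older's inequality (Lemma \ref{Holder}) applied to $\omega\cdot\omega^{-1}\equiv 1$ on $Q$ gives $|Q|\leq 4AB$, while the very definition of $[\omega]_{A_{p(\cdot)}}$ gives $AB\leq [\omega]_{A_{p(\cdot)}}|Q|$, so
\[
AB\approx |Q|
\]
with constants depending only on $[\omega]_{A_{p(\cdot)}}$. Raising this comparison to the nonnegative power $p_{+}(Q)-p_{-}(Q)$ and invoking Diening's inequality $|Q|^{p_{+}(Q)-p_{-}(Q)}\gtrsim 1$ from Lemma \ref{lemma:diening} (whose hypothesis $p(\cdot)\in LH_0$ is implied by $p(\cdot)\in LH$) yields
\[
A^{p_{+}(Q)-p_{-}(Q)}\,B^{p_{+}(Q)-p_{-}(Q)}\,\approx\,(AB)^{p_{+}(Q)-p_{-}(Q)}\,\approx\,|Q|^{p_{+}(Q)-p_{-}(Q)}\,\gtrsim\,1.
\]
In the subcase $B\leq 1$, the factor $B^{p_{+}(Q)-p_{-}(Q)}\leq 1$, and the display forces $A^{p_{+}(Q)-p_{-}(Q)}\gtrsim 1$, completing this subcase.

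The genuinely delicate subcase is $A<1<B$, which I expect to be the main obstacle. Here one needs to bound $B^{p_{+}(Q)-p_{-}(Q)}$ above by a constant. I would exploit the symmetry $\omega\in A_{p(\cdot)}\iff\omega^{-1}\in A_{p'(\cdot)}$ together with the fact that $p'(\cdot)\in LH\cap\P$, and apply Lemma \ref{lemma:p-infty-cond} to the dual pair $(\omega^{-1},p'(\cdot))$---legitimate because $B\geq 1$---to obtain $B\approx\sigma(Q)^{1/(p')_\infty}$, where $\sigma:=\omega^{-p'(\cdot)}\in A_\infty$ by Lemma \ref{lemma:Ap-Ainf}. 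The task then reduces to controlling $\sigma(Q)^{(p_{+}(Q)-p_{-}(Q))/(p')_\infty}$ uniformly, which amounts to showing that the product $(p_{+}(Q)-p_{-}(Q))\log\sigma(Q)$ stays bounded. This calls for dovetailing the local $LH_0$ estimate $p_{+}(Q)-p_{-}(Q)\lesssim 1/\log(e+1/\ell(Q))$ (handling small cubes) with the $LH_\infty$ estimate $|p(x)-p_\infty|\lesssim 1/\log(e+|x|)$ (handling cubes far from the origin, where the oscillation of $p(\cdot)$ is small because $p$ is close to the constant $p_\infty$), and then invoking the doubling and reverse-H\"older properties of $\sigma$ (Definition \ref{RH1}). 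Executing this coordination via a geometric case analysis on the location and side length of $Q$ is the heart of the argument and where the delicate balancing of the two $LH$ conditions against the $A_\infty$ behavior of $\sigma$ will be needed.
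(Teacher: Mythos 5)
The paper does not reprove this statement---it is quoted verbatim as Lemma 3.3 of \cite{Cruz2012}---so there is no in-paper argument to compare against; what follows is an assessment of your proposal alone.

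Your opening reductions are correct: the case $A\ge1$ is trivial, the two-sided comparison $|Q|\le 4AB\le 4[\omega]_{A_{p(\cdot)}}|Q|$ coming from Lemma \ref{Holder} and the definition of $A_{p(\cdot)}$ is right, and combining it with Lemma \ref{lemma:diening} closes the subcase $B\le1$.

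The gap is in the subcase $A<1<B$. You reduce, correctly, to showing $B^{p_+(Q)-p_-(Q)}\lesssim1$ and then, via Lemma \ref{lemma:p-infty-cond} applied to the dual pair $(\omega^{-1},p'(\cdot))$, to $\sigma(Q)^{(p_+(Q)-p_-(Q))/(p')_\infty}\lesssim1$. But your outline for establishing the last bound cites only the $LH_0$ and $LH_\infty$ estimates together with the doubling and reverse-H\"older properties of $\sigma$, and these alone do not suffice. The problematic configuration is a cube $Q$ of large side length whose distance to the origin is smaller than its side length: for such a cube neither $LH_0$ nor $LH_\infty$ makes $p_+(Q)-p_-(Q)$ small---it can be of size $p_+-p_-$---while $\sigma(Q)$ grows without bound as $Q$ expands, so the product $(p_+(Q)-p_-(Q))\log\sigma(Q)$ is not controlled by those tools. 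What rescues this case is the very constraint defining the subcase you are in, namely $A<1$: by Lemma \ref{lemma:mod-norm} this is essentially $u(Q)\lesssim1$ with $u=\omega^{p(\cdot)}$, and since $u\in A_\infty$ by Lemma \ref{lemma:Ap-Ainf}, Lemma \ref{lemma:Ainfty-prop} forces $u$ to grow geometrically along expanding cubes containing a fixed one, so a cube near the origin with $u(Q)\lesssim1$ has bounded side length and $\sigma(Q)$ is then bounded as well. Your write-up never invokes $A<1$ in the closing case analysis, so as it stands the argument has a genuine hole. Once that ingredient is added---together with the routine far-from-origin cases where one compares $\log\sigma(Q)$ against $\log(e+\operatorname{dist}(Q,0))$ using the polynomial growth of the $A_\infty$ weight $\sigma$---the plan does close and is in the spirit of the proof in \cite{Cruz2012}.
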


\begin{lem}[\cite{Cruz2012}, (3.3)] \label{lemma:infty-bound}
Let \(\pp \in LH \cap \P_1\). If \(\omega \in A_\pp\), then there exists a constant \(t>1\), which depends only on \(\omega\), \(\pp\), and the dimension \(n\), such that
$$
\int_{\mathbb{R}^n} {\omega(x)^{p(x)}}{(e+|x|)^{-t n p_{-}}} d x \leq 1
$$
\end{lem}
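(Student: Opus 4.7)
The plan is to leverage the fact that $u := \omega^{p(\cdot)}$ is an $A_\infty$ weight (via Lemma~\ref{lemma:Ap-Ainf}), hence doubling, to extract polynomial growth of $u$ on concentric balls, and then use a dyadic decomposition together with dominated convergence. Since $\pp \in LH \cap \P_1$ and $\omega \in A_\pp$, Lemma~\ref{lemma:Ap-Ainf} yields $u \in A_\infty$, so $u$ is a doubling measure: there is a constant $C_u = C_u(\omega,\pp) \geq 1$ with $u(B(0,2r)) \leq C_u\, u(B(0,r))$ for every $r > 0$. Iterating this on the concentric balls $B_k := B(0, 2^k)$ gives $u(B_k) \leq u(B_0)\cdot 2^{kN}$ with $N := \log_2 C_u$; local finiteness $u(B_0) < \infty$ follows from $\|\omega \chi_{B_0}\|_\pp < \infty$ (a consequence of $\omega \in A_\pp$) combined with Lemma~\ref{lemma:mod-norm}.

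Writing $I(t)$ for the target integral and decomposing $\rn = B_0 \sqcup \bigsqcup_{k \geq 1} A_k$ with $A_k := B_k \setminus B_{k-1}$, the bound $(e+|x|)^{-tnp_-} \leq 2^{-(k-1)tnp_-}$ on $A_k$ combined with $u(A_k) \leq u(B_k) \leq u(B_0)\cdot 2^{kN}$ gives
\[
I(t) \ \leq \ u(B_0)\Bigl(e^{-tnp_-} + 2^{tnp_-} \sum_{k \geq 1} 2^{-k(tnp_- - N)}\Bigr),
\]
which is finite as soon as $tnp_- > N$; fix any such $t_0$. To conclude, observe that $(e+|x|)^{-tnp_-} \to 0$ pointwise as $t \to \infty$ (since $\log(e+|x|) \geq 1$), and for $t \geq t_0$ the integrand is dominated by $\omega^{p(x)}(e+|x|)^{-t_0 n p_-} \in L^1(\rn)$. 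Dominated convergence then forces $I(t) \to 0$, so any sufficiently large $t > 1$ achieves $I(t) \leq 1$. The quantities $C_u$, $u(B_0)$, and $N$ are determined entirely by $\omega$, $\pp$, and $n$, so the chosen $t$ depends only on these parameters, as claimed.

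The main technical point is the first step: correctly reading off doubling (and the resulting explicit polynomial-growth rate on concentric balls) for $u = \omega^{p(\cdot)}$ from the variable-exponent $A_\pp$ condition via Lemma~\ref{lemma:Ap-Ainf}. Once that growth bound is in place, the dyadic splitting and the dominated-convergence argument that exploits the factor $(e+|x|)^{-tnp_-}$ are entirely routine, and the final bound $\leq 1$ (rather than just finite) is obtained by letting $t$ grow.
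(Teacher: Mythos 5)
Your argument is correct, and the paper itself offers no proof to compare against: Lemma~\ref{lemma:infty-bound} is simply cited from \cite{Cruz2012}. The route you take --- pass from $\omega\in A_{p(\cdot)}$ to $u=\omega^{p(\cdot)}\in A_\infty$ via Lemma~\ref{lemma:Ap-Ainf}, extract the doubling constant, iterate over dyadic balls to get $u(B(0,2^k))\lesssim 2^{kN}$, then split $\rn$ into dyadic annuli and sum --- is exactly the standard mechanism behind the cited estimate (it appears in essentially this form in the Cruz-Uribe--Fiorenza book). All the small steps hold: local finiteness $u(B_0)<\infty$ does follow from finiteness of $\|\omega\chi_{B_0}\|_{p(\cdot)}$ together with $p_+<\infty$ and Lemma~\ref{lemma:mod-norm}, the geometric tail converges once $tnp_->N$, and since $e+|x|>1$ the integrand decreases pointwise in $t$, so dominated (or even monotone) convergence drives $I(t)\to 0$. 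One purely cosmetic remark: the dominated-convergence step can be replaced by the explicit bound $I(t)\le e^{-(t-t_0)np_-}I(t_0)$ for $t\ge t_0$, coming from $(e+|x|)^{-(t-t_0)np_-}\le e^{-(t-t_0)np_-}$, which gives a concrete choice of $t$ in terms of $\omega$, $p(\cdot)$, $n$ and avoids any limiting argument; but what you wrote is equally valid.
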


\begin{lem}[\cite{Cruz2020}, Lemma 3.3] \label{lemma:p-infty-px}
	Let $\rr,\,\sst \in\P_0$, suppose 
	$$
|s(y)-r(y)| \leq \frac{C_0}{\log (e+|y|)}
$$
 Consider any set \(G\) and any non-negative measure \(\mu\). For each \(t \geq 1\), there exists a constant \(C = C(t, C_0)\) such that for all functions \(f\) satisfying \(|f(y)| \leq 1\)
	$$
\int_G|f(y)|^{s(y)} d \mu(y) \leq C \int_G|f(y)|^{r(y)} d \mu(y)+\int_G \frac{1}{(e+|y|)^{t n s_{-}(G)}} d \mu(y) .
$$
	Assuming instead that
$$
0 \leq r(y)-s(y) \leq \frac{C_0}{\log (e+|y|)},
$$
	then the same inequality holds for any function $f$.    
\end{lem}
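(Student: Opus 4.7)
The plan is to reduce both assertions to a pointwise inequality of the form
\[
|f(y)|^{s(y)} \leq C\,|f(y)|^{r(y)} + (e+|y|)^{-tn\,s_-(G)}
\]
valid for $\mu$-almost every $y\in G$, which integrates immediately to the claimed bound. The entire argument hinges on the factorization $|f(y)|^{s(y)} = |f(y)|^{r(y)}\cdot |f(y)|^{s(y)-r(y)}$, so everything boils down to controlling the correction factor $|f(y)|^{s(y)-r(y)}$ by a universal constant, at the price of throwing the bad set into the tail term.

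For the first assertion, where only $|s(y)-r(y)|\leq C_0/\log(e+|y|)$ is known but $|f|\leq 1$ is assumed, I split $G$ according to the sign of $s(y)-r(y)$. On the subset where $s(y)\geq r(y)$, the inequality $|f(y)|^{s(y)} \leq |f(y)|^{r(y)}$ is immediate from $|f(y)|\leq 1$. On the subset where $s(y)< r(y)$, I further dichotomize by comparing $|f(y)|$ with the threshold $(e+|y|)^{-tn}$: when $|f(y)|\geq (e+|y|)^{-tn}$, the log-H\"older closeness gives
\[
|f(y)|^{s(y)-r(y)} = |f(y)|^{-(r(y)-s(y))} \leq (e+|y|)^{tn(r(y)-s(y))} \leq e^{tnC_0},
\]
producing $|f(y)|^{s(y)} \leq e^{tnC_0}|f(y)|^{r(y)}$; when $|f(y)| < (e+|y|)^{-tn}$, one obtains directly $|f(y)|^{s(y)} \leq (e+|y|)^{-tn\,s(y)} \leq (e+|y|)^{-tn\,s_-(G)}$ from the definition of $s_-(G)$.

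For the second assertion the inequality $r(y)\geq s(y)$ holds identically, so the hypothesis $|f|\leq 1$ becomes unnecessary: if $|f(y)|\geq 1$, then $|f(y)|^{s(y)-r(y)}\leq 1$ for free, and if $|f(y)|<1$ the same dichotomy-by-size argument as above applies verbatim. I do not foresee any genuine obstacle here; the only delicate point is tuning the threshold $(e+|y|)^{-tn}$ so that, after raising to the $-s(y)$ power, the surviving tail exponent $tn\,s(y)$ dominates $tn\,s_-(G)$, which is automatic from the definition of $s_-(G)$. Everything else is a routine case analysis followed by integrating against $d\mu$ over $G$.
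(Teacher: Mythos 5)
Your proof is correct and follows essentially the same route as the one in \cite{Cruz2020}: factor $|f(y)|^{s(y)} = |f(y)|^{r(y)}\cdot|f(y)|^{s(y)-r(y)}$, split on the sign of $s(y)-r(y)$ (where the $|f|\le1$ hypothesis handles $s\ge r$, and is dispensable under the one-sided condition $r\ge s$), and then dichotomize by whether $|f(y)|$ exceeds the threshold $(e+|y|)^{-tn}$, using log-H\"older closeness to absorb the correction factor into the constant $e^{tnC_0}$ on the large side and the definition of $s_-(G)$ to produce the tail term on the small side. This pointwise estimate integrated against $d\mu$ is exactly the argument used there.
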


Let \(q(\cdot)\) and \(p_i(\cdot)\) for \(i = 1, \ldots, m\) belong to \(\P_0\). Set $\frac{{{\alpha }}}{n}=\sum\limits_{i = 1}^m {\frac{1}{{{p_i}( \cdot )}}}-\frac{1}{{{q}( \cdot )}}>0$. For every cube \(Q\), we define \(\eta(Q)\) by
\begin{equation*}
\frac{1}{\eta(Q)}=\sum\limits_{i = 1}^m {\frac{1}{{{{({p_i})}_ - }(Q)}}},
\end{equation*}
and also define $\delta(Q)$ by
\begin{align}\label{delta}
 \frac{1}{{\delta (Q)}}: = \frac{1}{{\eta (Q)}} - \frac{\alpha }{n}.
\end{align}
It is easy to find that for a.e. $x \in Q$,
{\begin{align*}
\sum\limits_{i = 1}^m {\frac{1}{{{{({p_i})}_ - }}}}  - \frac{\alpha }{n} \ge \frac{1}{{\delta (Q)}}\ge \frac{1}{q_ -(Q)}\ge \frac{1}{q(x)}\ge \frac{1}{q_+},
\end{align*}
}
and this results that ${\delta}$ is a bounded function on all cubes.

The next lemma plays a crucial role in our proof.
%dedicated specifically to establishing \(\eqref{EQ-I12estimate}\).

\begin{lem}\label{q-relation}
Let $h(\cdot) \in \P$, $p_i(\cdot) \in LH \cap \P_1$, $i=1,\cdots,m$, with  $\frac{1}{{p( \cdot )}} = \sum\limits_{i = 1}^m {\frac{1}{{{p_i}( \cdot )}}}$, $\frac{{{\alpha }}}{n}=\frac{1}{{{p}( \cdot )}}-\frac{1}{{{q}( \cdot )}}>0$, and $v\in A_{h(\cdot)}$.
Suppose that $\delta(Q)$ is defined as \eqref{delta}, then
\begin{equation}\label{EQ-keyboundednorm}
\mathop {\sup }\limits_{Q\subseteq \rn}\sup\limits_{x\in Q}\|v^{-1}\chi_Q\|_{h'(\cdot)}^{\delta(Q)-q(x)}\lesssim_{{n,\alpha,\vec{p}(\cdot)}} 1.
\end{equation}               
\end{lem}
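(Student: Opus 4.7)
I would fix an arbitrary cube $Q \subseteq \rn$ and a point $x \in Q$, and estimate $\|v^{-1}\chi_Q\|_{h'(\cdot)}^{\delta(Q)-q(x)}$ by case analysis, using the sign of the exponent to handle one regime trivially and reducing the remaining regime to a joint estimate between $|q(x)-\delta(Q)|$ (controlled by log-Hölder continuity of the $p_i$) and $|\log\|v^{-1}\chi_Q\|_{h'(\cdot)}|$ (controlled via the $A_{h(\cdot)}$ membership of $v$).

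\textbf{Step 1 (sign of the exponent and the trivial case).} Since $p_i(x) \geq (p_i)_-(Q)$ for every $x \in Q$, summing gives $\frac{1}{q(x)} = \sum_i \frac{1}{p_i(x)} - \frac{\alpha}{n} \leq \sum_i \frac{1}{(p_i)_-(Q)} - \frac{\alpha}{n} = \frac{1}{\delta(Q)}$, so $q(x) \geq \delta(Q)$ and the exponent $\delta(Q)-q(x)$ is non-positive. Hence if $\|v^{-1}\chi_Q\|_{h'(\cdot)} \geq 1$ the quantity is automatically $\leq 1$, and we need only treat the regime $\|v^{-1}\chi_Q\|_{h'(\cdot)} < 1$. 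In this regime, writing the quantity as $\exp\bigl([q(x)-\delta(Q)]\cdot\log\|v^{-1}\chi_Q\|_{h'(\cdot)}^{-1}\bigr)$, the goal reduces to showing that the product inside the exponential is uniformly bounded.

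\textbf{Step 2 (estimating the exponent $q(x)-\delta(Q)$).} A direct computation gives
\[
q(x)-\delta(Q) \;=\; q(x)\,\delta(Q)\sum_{i=1}^{m}\frac{p_i(x)-(p_i)_-(Q)}{p_i(x)\,(p_i)_-(Q)} \;\lesssim\; \sum_{i=1}^{m}\bigl[p_i(x)-(p_i)_-(Q)\bigr].
\]
Since each $p_i \in LH$, both the local log-Hölder condition at finite points and the decay condition at infinity apply: choosing $y \in Q$ that nearly attains $(p_i)_-(Q)$ and estimating $|p_i(x)-p_i(y)|$ either by $C/(-\log|x-y|)$ for close points or by $C/\log(e+|x|) + C/\log(e+|y|)$ for distant points, one obtains a bound of the form
\[
q(x)-\delta(Q) \;\lesssim\; \frac{1}{-\log\ell(Q)}\mathbf{1}_{\{\ell(Q)<1/2\}} + \frac{1}{\log(e+|x_Q|)},
\]
where $\ell(Q)$ and $x_Q$ denote the side-length and center of $Q$. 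The Diening condition (Lemma~\ref{lemma:diening}) lets this estimate absorb the intermediate-scale cubes.

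\textbf{Step 3 (lower bound for $\|v^{-1}\chi_Q\|_{h'(\cdot)}$) and combination.} For $v \in A_{h(\cdot)}$ (with $h \in LH$ in all intended applications), the dual weight $v^{-1}$ lies in $A_{h'(\cdot)}$, so Lemma~\ref{lemma:lemma3.3} yields $\|v^{-1}\chi_Q\|_{h'(\cdot)}^{h'_-(Q)-h'_+(Q)} \lesssim 1$, which in the regime $\|v^{-1}\chi_Q\|_{h'(\cdot)}<1$ is equivalent to $|\log\|v^{-1}\chi_Q\|_{h'(\cdot)}^{-1}| \lesssim 1/(h'_+(Q)-h'_-(Q))$. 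Complemented by Hölder's inequality $\|v^{-1}\chi_Q\|_{h'(\cdot)} \gtrsim |Q|/\|v\chi_Q\|_{h(\cdot)}$ together with the upper bound on $\|v\chi_Q\|_{h(\cdot)}$ furnished by Lemmas~\ref{lemma:p-infty-cond} and~\ref{lemma:infty-bound}, this gives a polynomial lower bound of the form $\|v^{-1}\chi_Q\|_{h'(\cdot)} \gtrsim c\,\ell(Q)^{K_1}(e+|x_Q|)^{-K_2}$ for constants $K_1,K_2>0$ depending only on $h$, $v$, and $n$. Multiplying by the small exponent of Step~2, the logarithms cancel and yield a uniform $O(1)$ bound, which closes the proof.

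\textbf{Main obstacle.} The crux is Step~3: turning the qualitative $A_{h(\cdot)}$ condition into a quantitative polynomial lower bound for $\|v^{-1}\chi_Q\|_{h'(\cdot)}$ that interacts correctly with the two regimes of log-Hölder control on the $p_i$. Balancing the small-cube regime (where $q(x)-\delta(Q)$ is small but the norm may decay rapidly) against the large-cube or origin-containing regime (where the norm is bounded below but the log-Hölder decay is weaker) requires carefully interleaving both halves of the $LH$ condition with the structural estimates for variable $A_p$ weights.
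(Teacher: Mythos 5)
Your high-level architecture is right and close to the paper's: you fix $Q$ and $x$, reduce to the regime $\|v^{-1}\chi_Q\|_{h'(\cdot)}<1$, estimate $q(x)-\delta(Q)$ by log-H\"older continuity of the $p_i$, and then seek a polynomial lower bound on $\|v^{-1}\chi_Q\|_{h'(\cdot)}$ via H\"older's inequality and the $A_{h(\cdot)}$ condition. But two things in Steps~2--3 do not close as written.

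First, the route via Lemma~\ref{lemma:lemma3.3} is a dead end: it gives $\log\|v^{-1}\chi_Q\|_{h'(\cdot)}^{-1}\lesssim 1/\bigl(h'_+(Q)-h'_-(Q)\bigr)$, an oscillation bound in the exponent $h'$, whereas Step~2 produces $q(x)-\delta(Q)\lesssim\sum_i\bigl[(p_i)_+(Q)-(p_i)_-(Q)\bigr]$, an oscillation in the $p_i$'s. There is no hypothesis in the lemma tying the oscillation of $h'$ to the oscillations of all the $p_i$, so the product of the two cannot be bounded by this pairing (even in the intended application $h'=mp_l'$, only one $p_l$ appears on the $h'$ side but all $p_1,\dots,p_m$ appear on the other). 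The H\"older route you also sketch is the one that actually works.

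Second, the sum bound $q(x)-\delta(Q)\lesssim\frac{1}{-\log\ell(Q)}+\frac{1}{\log(e+|x_Q|)}$ combined multiplicatively with $\log\|v^{-1}\chi_Q\|_{h'(\cdot)}^{-1}\lesssim K_1\log(1/\ell(Q))+K_2\log(e+|x_Q|)$ leaves uncontrolled cross terms of the form $\log(e+|x_Q|)/\log(1/\ell(Q))$ and its reciprocal. The paper avoids this precisely by \emph{not} taking logarithms of a single product: it fixes a reference unit cube $Q_0$ at the origin and splits into two cases. For $Q$ near the origin one has $\|v^{-1}\chi_Q\|_{h'(\cdot)}\gtrsim|Q|$ (using $v\in A_{h(\cdot)}$ on $5Q_0$ and monotonicity of the norm), so $\|v^{-1}\chi_Q\|_{h'(\cdot)}^{\delta(Q)-q(x)}\lesssim |Q|^{\delta(Q)-q(x)}\lesssim 1$ by the Diening condition (Lemma~\ref{lemma:diening}) applied to each $p_i$. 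For $Q$ far from the origin one passes through a cube $\hat Q\supseteq Q\cup Q_0$ and factors the lower bound as $\|v^{-1}\chi_Q\|_{h'(\cdot)}\gtrsim \tfrac{|Q|}{|\hat Q|}\,\|v^{-1}\chi_{Q_0}\|_{h'(\cdot)}$; then $|Q|^{\delta(Q)-q(x)}$ is handled by the Diening condition while $|\hat Q|^{q(x)-\delta(Q)}$ is handled by the log-H\"older condition at infinity (which gives $q(x)-\delta(Q)\lesssim 1/\log(e+d_Q)$ and $|\hat Q|\lesssim(e+d_Q)^n$). This factorization keeps the near-origin and far-from-origin contributions apart and is what removes the cross terms your sum formulation leaves behind. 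If you replace your sum by a min and factor out the $|Q|$ power before exponentiating, your Step~3 H\"older route reproduces the paper's argument.
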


\begin{proof}
Without loss of generality, we assume that $\|v^{-1}\chi_Q\|_{h'(\cdot)}\leq 1$ and $m=2$. 

Let \(Q_0\) be the cube centered at the origin with volume \(|Q_0|=1\). For any cube \(Q\), we have either \(|Q| \leq |Q_0|\) or \(|Q| > |Q_0|\). {We prove inequality \eqref{EQ-keyboundednorm} for the first case; the second case follows by swapping \(Q\) and \(Q_0\). Let \(\ell(Q_0)\) be the side length of \(Q_0\). If \(\dist(Q, Q_0) \leq \ell(Q_0)\), then \(Q\) is contained in \(5 Q_0\).}
%Define
%
%\begin{equation} \label{eqn:q-minus}
%\delta_- = \inf_Q \delta(Q)\leq \sup_Q \delta(Q) \leq q_+<\infty.  
%\end{equation}
% 
Then for any $x \in Q$,
\begin{equation}\label{EQ-logcontinuityof-q}
0\leq q(x)-\delta(Q)\lesssim_{n,\alpha ,{p_1}( \cdot ),{p_2}( \cdot )}(p_{1})_{+}(Q)-(p_{1})_{-}(Q)+(p_{2})_{+}(Q)-(p_{2})_{-}(Q).
\end{equation}
Therefore, by H\"older's inequality and $v\in A_{h'(\cdot)}$,
\begin{align} \label{eqn:small}
|Q|\lesssim5^{n}\| v^{-1}\chi_Q\|_{h'(\cdot)}|5Q_{0}|^{-1}\|v\chi_{5Q_{0}}\|_{h(\cdot)}\lesssim \|v^{-1}\chi_Q\|_{h'(\cdot)}\|v^{-1}\chi_{5Q_{0}}\|_{h'(\cdot)}^{-1}.
\end{align}
Hence, by \eqref{EQ-logcontinuityof-q}, \eqref{eqn:small}, and Lemma~\ref{lemma:diening},
 \begin{align*}
\|v^{-1}\chi_Q\|_{h'(\cdot)}^{\delta(Q)-q(x)}
 \leq \left(1+\|v^{-1}\chi_{5Q_{0}}\|_{h'(\cdot)}^{-1}\right)^{q_{+}- \mathop {\inf }\limits_{R \in {\mathbb{R}^n}} \delta (R)}|Q|^{\delta(Q)-q(x)}
\lesssim 1.
 \end{align*}

{Now assume \(\dist(Q, Q_0) \geqslant \ell(Q_0)\). Under this condition, there exists a cube \(\hat{Q}\) containing both \(Q\) and \(Q_0\). The side length \(\ell(\hat{Q})\) is comparable to \(\dist(Q, Q_0)\) and \(\dist(Q, 0)\), denoted as \(d_Q\).}
% Now assume that $\dist(Q,Q_{0})\geqslant \ell(Q_{0})$.  Then there exists a cube
% $\hat{Q}$ such that $Q,\, Q_0 \subseteq \hat{Q}$ and
% $\ell(\hat{Q})\approx \dist(Q,Q_{0})\approx
% \dist(Q,0)=d_{Q}$.
Therefore, by following the inequality~\(\eqref{eqn:small}\) and replacing \(5Q_0\) with \(\hat{Q}\), we obtain
   \begin{equation*}
|Q|\lesssim\vert\hat{Q}\vert\|v^{-1}\chi_Q\|_{h'(\cdot)}\|v^{-1}\chi_{\hat{Q}}\|_{h'(\cdot)}^{-1}.
   \end{equation*}
By the above, Lemma \ref{lemma:diening}, and the fact that
$\|v^{-1}\chi_{Q_0}\|_{h'(\cdot)} \leq \|v^{-1}\chi_{\hat{Q}}\|_{h'(\cdot)}$, we get
\begin{align}\label{es.v}
\|v^{-1}\chi_Q \|_{h'(\cdot)}^{\delta(Q)-q(x)}\lesssim
  \vert\hat{Q}\vert^{q(x)-\delta(Q)}.
\end{align}

{To estimate the final term effectively, observe that if \( p_j(\cdot) \) belongs to the \( LH \) class, there exist points \( x_1, x_2 \in \overline{Q} \) where the minimal values of \( p_1 \) and \( p_2 \) over \( Q \) are achieved. Specifically, let \( (p_1)_{-}(Q) = p_1(x_1) \) and \( (p_2)_{-}(Q) = p_2(x_2) \).}
% To estimate this final term, observe that if $p_j(\cdot)\in LH$, there
% exist $x_{1}, x_{2} \in \overline{Q}$ such that $(p_{1})_{-}(Q)=p_{1}(x_{1})$
% and $(p_{2})_{-}(Q)=p_{2}(x_{2})$.   
Moreover, the distances \( |x_1| \) and \( |x_2| \) are approximately equal to the distance from cube Q to the origin, denote \( d_Q \). Hence, $\left|x_1\right|,\left|x_2\right| \approx d_Q$. Then by invoking the log-Hölder continuity of the functions \( p_1 \) and \( p_2 \), we can deduce that

\[ 
\left| {{\frac{1}{q_{\infty}}-\frac{1}{\delta(Q)}}} \right|
\leq
\left|\frac{1}{(p_1)_\infty}-\frac{1}{p_1(x_1)}\right|
+
\left|\frac{1}{(p_2)_\infty}-\frac{1}{p_2(x_2)}\right|
\lesssim 
\frac{1}{\log(e+d_{Q})}. \] 
Therefore, for $x\in Q$, since $|x|\approx d_{Q}$,
\[\left|{\frac{1}{q(x)}-\frac{1}{\delta(Q)}}\right|
\leq \left|{\frac{1}{q_{\infty}}-\frac{1}{\delta(Q)}}\right|+\left|{\frac{1}{p(x)}-\frac{1}{p_{\infty}}}\right|
\lesssim\frac{1}{\log(e+d_{Q})},
\]
Since  $\vert\hat{Q}\vert \lesssim (e+d_{Q})^{n}$, by \eqref{es.v}, we have
\begin{equation*}
\| v^{-1}\chi_Q\|_{h(\cdot)}^{\delta(Q)-q(x)}\lesssim \vert\hat{Q}\vert^{q(x)-\delta(Q)}\lesssim 1.
\end{equation*}
\end{proof}

%The following Rubio de Francia $A_\infty$ extrapolation theorem is a key tool for proving the main theorems.
\begin{lem}[\cite{Cruz2017}, Theorem 2.25]\label{prop:Ainfty-extrapol}
	Let $0<{p_0}<\infty$ and a weight $\omega_0\in A_\infty$, 
	\begin{align} \label{eqn:extrapol1}
		\|f\|_{L^{p_0}(\omega_0)} \lesssim \|g\|_{L^{p_0}(\omega_0)} 
	\end{align}
{Suppose for every pair \( (f, g) \in \mathcal{F} \), \( \|f\|_{L^{p_0}(\omega_0)} \) is finite, \( \pp \in \P_0 \), and there exists \( s \leq p_- \) such that \( \omega^s \in A_{\pp/s} \) and the maximal operator \( M \) is bounded on \( L^{(\pp/s)'}(\omega^{-s}) \). Under these conditions, for \( (f, g) \in \mathcal{F} \) with \( \|f\|_{L^\pp(\omega)} < \infty \), we have,}
 
	% for every pair of functions $(f,g)$ in a family $\F$ such that
	% $\|f\|_{L^{p_0}(\omega_0)}<\infty$. Let $\pp \in \P_0$, suppose there
	% exists $s\leq p_-$ such that $\omega^s \in A_{\pp/s}$ and $M$ is bounded on $L^{(\pp/s)'}(\omega^{-s})$.  Then for $(f,g)\in \F$
	% such that $\|f\|_{L^\pp(\omega)}<\infty$,
	% %
	\[ \|f\|_{L^\pp(\omega)} \lesssim \|g\|_{L^\pp(\omega)}. \]
\end{lem}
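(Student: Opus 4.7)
The plan is to run the standard Rubio de Francia extrapolation scheme, adapted to the variable exponent setting: dualize the target norm, build a carefully chosen $A_1$ weight via a Rubio de Francia iteration driven by the boundedness of $M$ on $L^{(p(\cdot)/s)'}(\omega^{-s})$, apply the assumed constant-exponent inequality with $p_0=s$ against this weight, and close via a Hölder pairing back in $L^{p(\cdot)/s}$.

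First, since $s\le p_-$ we have $p(\cdot)/s\in\P$, so Lemma~\ref{lemma:dual} applied to the exponent $p(\cdot)/s$ yields
\[
\|f\|_{L^{p(\cdot)}(\omega)}^{s}=\|\omega^{s}f^{s}\|_{L^{p(\cdot)/s}}\approx \sup_{h}\int_{\rn} f^{s}\omega^{s} h\,dx,
\]
where the supremum runs over non-negative $h$ with $\|h\|_{L^{(p(\cdot)/s)'}}\le 1$. For each such $h$, letting $\|M\|$ denote the operator norm of $M$ on $L^{(p(\cdot)/s)'}(\omega^{-s})$, I introduce the iteration operator $\mathcal{R}\phi:=\sum_{k\ge 0}(2\|M\|)^{-k}M^{k}\phi$ and set $H:=\mathcal{R}(h\omega^{s})$. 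Because $\|h\omega^{s}\|_{L^{(p(\cdot)/s)'}(\omega^{-s})}=\|h\|_{L^{(p(\cdot)/s)'}}\le 1$, the standard Rubio de Francia bookkeeping gives the three properties $H\ge h\omega^{s}$, $\|H\|_{L^{(p(\cdot)/s)'}(\omega^{-s})}\le 2$, and $[H]_{A_{1}}\le 2\|M\|$. Since $A_{1}\subseteq A_{\infty}$ with constant controlled by $\|M\|$, the choice $\omega_{0}:=H^{1/s}$ (so that $\omega_{0}^{s}=H$) makes the constant-exponent hypothesis available with $p_{0}=s$; combined with Lemma~\ref{Holder} in $L^{p(\cdot)/s}$ applied to the pairing $(g\omega)^{s}\cdot (H\omega^{-s})$, this produces the chain
\[
\int f^{s}\omega^{s} h\,dx\le \int f^{s} H\,dx=\|f\|_{L^{s}(\omega_{0})}^{s}\lesssim \|g\|_{L^{s}(\omega_{0})}^{s}=\int g^{s} H\,dx\lesssim \|g\|_{L^{p(\cdot)}(\omega)}^{s}\,\|H\|_{L^{(p(\cdot)/s)'}(\omega^{-s})}\lesssim \|g\|_{L^{p(\cdot)}(\omega)}^{s},
\]
where the finiteness of $\|f\|_{L^{s}(\omega_{0})}$ needed to invoke the hypothesis follows from the analogous Hölder bound together with the standing assumption that $\|f\|_{L^{p(\cdot)}(\omega)}<\infty$. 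Taking the supremum in $h$ delivers the stated inequality.

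The main obstacle I expect to handle carefully is the transition at the application of the hypothesis: one must verify that the implicit constant depends only on $[\omega_{0}]_{A_{\infty}}$, and that this remains uniformly controlled as $h$ varies (which is guaranteed here because $[\omega_{0}^{s}]_{A_{\infty}}\le [H]_{A_{1}}\le 2\|M\|$). A secondary care-point is justifying that $\mathcal{R}(h\omega^{s})$ is finite almost everywhere; this follows from the geometric convergence of the defining series in $L^{(p(\cdot)/s)'}(\omega^{-s})$, which in turn rests on the hypothesized boundedness of $M$ on that space.
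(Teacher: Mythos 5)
This lemma is cited in the paper from Cruz-Uribe and Wang without proof, so there is no in-paper argument to compare against; I will compare your attempt to the argument in the cited source, which you have essentially reproduced. Your plan — dualize $\|\,\omega^s f^s\|_{L^{p(\cdot)/s}}$, run a Rubio de Francia iteration against the boundedness of $M$ on $L^{(p(\cdot)/s)'}(\omega^{-s})$ to obtain an $A_1$ majorant $H$ with $[H]_{A_1}\le 2\|M\|$, plug the resulting $A_\infty$ weight into the constant-exponent hypothesis, and close with H\"older in $L^{p(\cdot)/s}$ — is exactly the right scheme, and the chain of inequalities is correct.

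There is one step you treat as free that actually needs a citation or an argument: you apply the hypothesis with the exponent $p_0 = s$, but the statement only gives \eqref{eqn:extrapol1} for a single fixed $p_0$, which need not equal $s$. The move from one $p_0$ to another in $A_\infty$ extrapolation is a genuine (if standard) lemma: if \eqref{eqn:extrapol1} holds for one $p_0\in(0,\infty)$ and every $\omega_0\in A_\infty$ with constants controlled by $[\omega_0]_{A_\infty}$, then it holds for every exponent in $(0,\infty)$. This is proved, for instance, in Cruz-Uribe, Martell and P\'erez and is used at precisely this point in the proof of the cited theorem; without it, your choice $p_0=s$ is unjustified. You should either invoke that self-improvement explicitly or restrict to the case $p_0=s$. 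A secondary, more minor point: the duality you invoke requires $p(\cdot)/s\in\P$, whereas the hypothesis only gives $s\le p_-$, which allows $(p/s)_-=1$; you should either assume $s<p_-$ there or note that the associate-space estimate still holds in the borderline case. Finally, be careful with which object must lie in $A_\infty$: with the paper's multiplier convention $\|f\|_{L^{p_0}(\omega_0)}=\|\omega_0 f\|_{L^{p_0}}$, the measure governing the $A_\infty$ hypothesis is $\omega_0^{p_0}$, and your choice $\omega_0=H^{1/s}$ (so $\omega_0^s=H\in A_1$) is consistent with this only when $p_0=s$, which again points back to the change-of-exponent step.
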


Let $M^{\sharp}$ be  the standard sharp  maximal function of Fefferman and Stein, see \cite{Gra2}, and
%-------------------------
\begin{align*}
	M^{\sharp}f (x)     &= \sup_{Q\ni x} \inf_{c \in \mathbb{R}} \dfrac{1}{|Q|} \int_{Q} |f(y)-c| \mathrm{d}y  \approx  \sup_{Q\ni x} \dfrac{1}{|Q|} \int_{Q} |f(y)-f_{Q}| \mathrm{d}y,
\end{align*}
where $f_Q = \langle f\rangle_Q$. The operators  $M_{\delta}$ and $M_{\delta}^{\sharp}$ are defined as  $M_{\delta}f (x)    = \big[ M(|f|^{\delta})(x) \big]^{1/\delta}$ and $M_{\delta}^{\sharp}f (x)    = \big[ M^{\sharp}(|f|^{\delta})(x) \big]^{1/\delta}$, respectively.

In order to manege our discussion, we define the weighted dyadic maximal operator $M_\sigma^{\D_0}$. For a given weight \(\sigma\), this operator is defined
\[ M_\sigma^{\D_0} f(x) = \sup_{Q\in \D_0} 
\frac{1}{\sigma(Q)}\int_Q |f|\sigma\,dy\cdot \chi_Q(x) 
= \sup_{Q\in \D_0} \langle |f|\rangle_{\sigma,Q} \chi_Q(x), \]
where the supremum is taken over all cubes in the collection $\D_0$ of
dyadic cubes:

\begin{lem}[\cite{Gra1}, (7.1.28)] \label{lemma:wtd-max-bound}
Let $1<p<\infty$, suppose $\sigma$ is a weight, then $M_\sigma^{\D_0}$ is bounded on ${L^p}(\sigma )$ and the operator norm is bounded by a constant depending only on $p$.
\end{lem}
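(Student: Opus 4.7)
The plan is to prove this via the classical Calderón--Zygmund/Marcinkiewicz approach adapted to the weighted dyadic setting. Specifically, I will establish the weak-type $(1,1)$ bound for $M_\sigma^{\D_0}$ with respect to the measure $\sigma\,dx$, combine it with the trivial $L^\infty$ estimate, and conclude by Marcinkiewicz interpolation on $L^p(\sigma)$.

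For the weak-type endpoint, fix $\lambda>0$ and consider the level set $E_\lambda=\{x\in\rn:M_\sigma^{\D_0}f(x)>\lambda\}$. For each $x\in E_\lambda$ there exists a dyadic cube $Q\in\D_0$ containing $x$ with $\langle|f|\rangle_{\sigma,Q}>\lambda$. Since any two dyadic cubes are either nested or disjoint, one can extract a countable collection $\{Q_j\}\subseteq\D_0$ of maximal such cubes. The maximality makes them pairwise disjoint, and by construction $E_\lambda=\bigcup_j Q_j$. The weak-type bound then follows by summation:
\begin{align*}
\sigma(E_\lambda)=\sum_j\sigma(Q_j)\le\sum_j\frac{1}{\lambda}\int_{Q_j}|f|\sigma\,dx\le\frac{1}{\lambda}\|f\|_{L^1(\sigma)}.
\end{align*}
The $L^\infty$ endpoint is immediate: $\langle|f|\rangle_{\sigma,Q}\le\|f\|_{L^\infty}$ for every cube $Q$, so $\|M_\sigma^{\D_0}f\|_{L^\infty}\le\|f\|_{L^\infty}$, with no dependence on $\sigma$.

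Applying the Marcinkiewicz interpolation theorem for the measure $d\sigma=\sigma\,dx$ between these two endpoints yields the $L^p(\sigma)$ bound for every $1<p<\infty$, with operator norm $C_p$ depending only on $p$ (and not on $\sigma$), as the endpoint constants are both $1$. There is essentially no obstacle here beyond the standard care required in selecting the maximal dyadic cubes; the key structural feature that makes the argument succeed with a constant independent of $\sigma$ is that in the weighted averages $\langle|f|\rangle_{\sigma,Q}=\sigma(Q)^{-1}\int_Q|f|\sigma\,dx$, the normalizing factor $\sigma(Q)$ in the denominator exactly cancels against $\sigma(Q_j)$ in the sum above. This is precisely why the assumption $\sigma\in A_\infty$ (or any Muckenhoupt condition) is unnecessary for the lemma.
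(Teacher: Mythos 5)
Your argument is correct and is exactly the one behind the cited reference (Grafakos, (7.1.28)): the weak-$(1,1)$ bound with constant $1$ for the measure $\sigma\,dx$ comes from selecting maximal dyadic cubes (or, when such maximal cubes need not exist, from restricting to scales $\le 2^N$ and letting $N\to\infty$), the $L^\infty$ bound is trivial, and Marcinkiewicz interpolation on $(\rn,\sigma\,dx)$ yields the $L^p(\sigma)$ bound with constant depending only on $p$. The paper itself does not reprove this lemma, but your blind proof reproduces the standard argument it invokes.
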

%The following Fefferman-Stein inequality are important to prove the main results.
\begin{lem}[\cite{Cruz2004}, Theorem 1.3]\label{FS1}
	Let $\omega \in A_\infty$ and $0<p,\gamma<\infty$, then
	\begin{align*}
		{\left\| f \right\|_{{L^p}(\omega )}} \lesssim {\left\| {M_\gamma ^{}f} \right\|_{{L^p}(\omega )}} \lesssim {\left\| {M_\gamma ^\# f} \right\|_{{L^p}(\omega )}}.
	\end{align*}
	
\end{lem}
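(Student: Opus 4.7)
The plan is to split the chain of inequalities into two independent parts and reduce each to a well-known fact. The statement is a quantitative comparison of three quantities associated with $f$ in the weighted $L^p$ norm with an $A_\infty$ weight, and for each step there is a natural reduction.

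For the first inequality $\|f\|_{L^p(\omega)} \lesssim \|M_\gamma f\|_{L^p(\omega)}$, I would prove the pointwise bound $|f(x)| \leq M_\gamma f(x)$ for almost every $x$, which gives the inequality with constant $1$ after integrating against $\omega$. This pointwise bound follows from Lebesgue's differentiation theorem applied to the locally integrable function $|f|^\gamma$: for a.e.\ $x$, $|f(x)|^\gamma = \lim_{r \to 0} \frac{1}{|B(x,r)|}\int_{B(x,r)} |f|^\gamma \leq M(|f|^\gamma)(x)$, so taking $\gamma$-th roots gives $|f(x)| \leq M_\gamma f(x)$. The only thing to check is that $|f|^\gamma$ is locally integrable, which may be assumed after a standard truncation argument if one starts from the hypothesis that the right-hand side is finite (otherwise there is nothing to prove).

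For the second inequality $\|M_\gamma f\|_{L^p(\omega)} \lesssim \|M_\gamma^{\#} f\|_{L^p(\omega)}$, the plan is to reduce to the \emph{classical} Fefferman--Stein inequality
\[
\|Mg\|_{L^{q}(\omega)} \lesssim \|M^{\#} g\|_{L^{q}(\omega)}, \qquad \omega \in A_\infty, \ 0<q<\infty,
\]
by choosing $g = |f|^\gamma$ and $q = p/\gamma \in (0,\infty)$. Since by definition
\[
\|M_\gamma f\|_{L^p(\omega)}^p = \int_{\mathbb{R}^n} \bigl(M(|f|^\gamma)\bigr)^{p/\gamma} \omega\,dx = \|M(|f|^\gamma)\|_{L^{p/\gamma}(\omega)}^{p/\gamma},
\]
and similarly for $M_\gamma^{\#} f$, the classical Fefferman--Stein inequality applied to $|f|^\gamma$ immediately yields the desired bound after raising to the power $1/\gamma$.

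The main obstacle is the standard a priori finiteness assumption needed to invoke the classical Fefferman--Stein inequality: one needs to know that $Mg$ (equivalently $M_\gamma f$) belongs to $L^{p/\gamma}(\omega)$ before concluding the estimate. I would handle this by a truncation/approximation argument: first replace $f$ by $f_N = f \chi_{\{|f|\leq N\} \cap B(0,N)}$, for which $|f_N|^\gamma$ is bounded with compact support and therefore $M(|f_N|^\gamma) \in L^{p/\gamma}(\omega)$, apply the Fefferman--Stein inequality to $f_N$, and then pass to the limit $N\to\infty$ using the monotone convergence theorem or Fatou's lemma together with the pointwise convergence $M_\gamma f_N \uparrow M_\gamma f$ and $M_\gamma^{\#} f_N \leq M_\gamma^{\#} f$. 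Everything else is routine, so with these two reductions the lemma follows.
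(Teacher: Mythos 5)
The paper does not prove this lemma; it cites it verbatim from Cruz--Uribe, Martell and P\'erez (J.\ Funct.\ Anal.\ \textbf{213} (2004), Theorem~1.3), so there is no in-paper argument to compare against. Your first reduction (Lebesgue differentiation on $|f|^\gamma$ giving $|f|\le M_\gamma f$ a.e.) and your second reduction ($M_\gamma f = (M(|f|^\gamma))^{1/\gamma}$, $M_\gamma^\# f = (M^\#(|f|^\gamma))^{1/\gamma}$, hence substituting $g=|f|^\gamma$, $q=p/\gamma$ into the classical $\|Mg\|_{L^q(\omega)}\lesssim\|M^\# g\|_{L^q(\omega)}$) are both correct and are exactly how one should think of the result.

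The genuine gap is in your treatment of the a~priori finiteness. You claim that for $f_N=f\chi_{\{|f|\le N\}\cap B(0,N)}$ one automatically has $M(|f_N|^\gamma)\in L^{p/\gamma}(\omega)$ because $|f_N|^\gamma$ is bounded with compact support. This fails for a general $\omega\in A_\infty$: one only knows $\omega\in A_s$ for \emph{some} (possibly very large) finite $s$, so $\omega(B(0,R))$ may grow like $R^{ns}$, while $M(|f_N|^\gamma)(x)$ decays only like $|x|^{-n}$, and the integral
\begin{equation*}
\int_{|x|>R}\bigl(M(|f_N|^\gamma)(x)\bigr)^{p/\gamma}\omega(x)\,dx \;\approx\; \sum_{k\ge 0}(2^kR)^{-np/\gamma}\,\omega\bigl(B(0,2^{k+1}R)\bigr)
\end{equation*}
diverges whenever $p/\gamma\le s$. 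Thus truncating $f$ does not put you into the regime where the good-$\lambda$ argument closes. The correct resolution, and the one in the cited reference, is that the lemma carries the hypothesis that the left-hand side $\|M_\gamma f\|_{L^p(\omega)}$ is finite (this hypothesis is suppressed in the paper's statement but is needed); it cannot in general be manufactured by truncating $f$. In the applications in this paper the a~priori finiteness is arranged instead by truncating the \emph{output} ($\min(|\I_\alpha\vec f|,N)\chi_{B(0,N)}$), which is the device that actually works, so you should either carry the finiteness hypothesis explicitly or adopt that output-truncation strategy.
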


\subsection{Main results}\label{frac}
~

The aim of this subsection is to build the characterizations for variable multiple weights $A_{\vec{p}(\cdot),q(\cdot)}$ by multilinear fractional-type operators $\T_{\alpha}$.

For $\alpha  \in \left[ {0,mn} \right)$ and any ball $B \subseteq \rn$, we define the multilinear fractional averaging operator by
$${\A_{\alpha,B}}(\vec f)(x): = {\left| B \right|^{\frac{\alpha }{n}}}\left( {\prod\limits_{i = 1}^m {{{\left\langle {{f_i}} \right\rangle }_B}} } \right){\chi _B}(x).$$ 
\begin{thm} \label{Cha.Apq}
	Let  $p_i(\cdot)\in \P, i=1,\ldots,m$ with $\frac{1}{{p( \cdot )}} = \sum\limits_{i = 1}^m {\frac{1}{{{p_i}( \cdot )}}}$, $\frac{{{\alpha }}}{n}=\frac{1}{{{p}( \cdot )}}-\frac{1}{{{q}( \cdot )}} \in [0,m)$, and $\vec{\omega}$ is a multiple weight, then  ${{\A_{\alpha,B}}}$ is bounded from ${L^{{p_1}( \cdot )}}({\omega _1}) \times  \cdots  \times {L^{{p_m}( \cdot )}}({\omega _m})$ to ${L^{q( \cdot )}}(\omega )$ uniformly for all balls $B$ if and only if $\vec\omega  \in {A_{\vec p( \cdot ),q( \cdot )}}$.
	Furthermore, we have 
	\begin{align*}
	{\left[ {\vec \omega } \right]_{{A_{\vec p( \cdot ),q( \cdot )}}}}{ \lesssim _{\vec p(\cdot)}} \mathop {\left\| {{\A_{\alpha,B}}} \right\|_{{L^{{p_1}( \cdot )}}({\omega _1}) \times  \cdots  \times {L^{{p_m}( \cdot )}}({\omega _m}) \to {L^{q( \cdot )}}(\omega )}} \le {\left[ {\vec \omega } \right]_{{A_{\vec p( \cdot ),q( \cdot )}}}},
	\end{align*}
where the implicit constant only depends on ${\vec p(\cdot)}$.
\end{thm}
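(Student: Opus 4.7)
Both implications reduce to direct computations using H\"older's inequality on variable Lebesgue spaces (Lemma~\ref{Holder}) and the duality representation of the norm (Lemma~\ref{lemma:dual}). The starting point is the identity
\begin{equation*}
\|\A_{\alpha,B}(\vec f)\|_{L^{q(\cdot)}(\omega)}
= |B|^{\alpha/n-m}\,\Bigl(\prod_{i=1}^m \int_B f_i\,dy\Bigr)\,\|\omega\chi_B\|_{L^{q(\cdot)}},
\end{equation*}
which already isolates exactly the quantities appearing in the definition of $[\vec\omega]_{A_{\vec p(\cdot),q(\cdot)}}$.

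For the direction ``$\vec\omega\in A_{\vec p(\cdot),q(\cdot)} \Rightarrow \A_{\alpha,B}$ uniformly bounded,'' I would rewrite each average as $\int_B f_i\,dy=\int_{\rn}(f_i\omega_i)(\omega_i^{-1}\chi_B)\,dy$ and apply Lemma~\ref{Holder} to get $\int_B f_i\,dy \lesssim \|f_i\|_{L^{p_i(\cdot)}(\omega_i)}\,\|\omega_i^{-1}\chi_B\|_{L^{p_i'(\cdot)}}$. Substituting into the identity above and regrouping factors yields exactly the $[\vec\omega]_{A_{\vec p(\cdot),q(\cdot)}}$ quantity multiplied by $\prod_i\|f_i\|_{L^{p_i(\cdot)}(\omega_i)}$, giving the stated operator-norm bound.

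For the converse, I would freeze a ball $B$ and, for each $\varepsilon>0$, invoke Lemma~\ref{lemma:dual} to find nonnegative functions $g_i$ supported in $B$ with $\|g_i\|_{L^{p_i(\cdot)}}\leq 1$ and
\begin{equation*}
\int_{\rn}(\omega_i^{-1}\chi_B)\,g_i\,dy \geq c_{p_i(\cdot)}(1-\varepsilon)\|\omega_i^{-1}\chi_B\|_{L^{p_i'(\cdot)}}.
\end{equation*}
Taking the test vector $f_i := \omega_i^{-1} g_i \chi_B$ forces $\|f_i\|_{L^{p_i(\cdot)}(\omega_i)}=\|g_i\chi_B\|_{L^{p_i(\cdot)}}\leq 1$, while the starting identity lower-bounds $\|\A_{\alpha,B}(\vec f)\|_{L^{q(\cdot)}(\omega)}$ by a constant depending only on $\vec p(\cdot)$ times $|B|^{\alpha/n-m}\,\|\omega\chi_B\|_{L^{q(\cdot)}}\prod_i\|\omega_i^{-1}\chi_B\|_{L^{p_i'(\cdot)}}$. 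Applying the hypothesised uniform boundedness of $\A_{\alpha,B}$ to this $\vec f$, letting $\varepsilon\to 0$, and taking the supremum over $B$ produces the asserted estimate for $[\vec\omega]_{A_{\vec p(\cdot),q(\cdot)}}$.

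The only real subtlety is constant bookkeeping: one must track the lower constant in the dual representation of Lemma~\ref{lemma:dual} to secure the $\lesssim_{\vec p(\cdot)}$ dependence in the necessity direction, and one must check that the constructed test functions have finite weighted norms so that the uniform boundedness hypothesis is genuinely applicable (without any a priori qualification). Apart from this, the argument is pure H\"older plus duality and contains no deeper obstacle.
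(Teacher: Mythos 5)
Your proposal is correct and matches the paper's proof essentially step for step: sufficiency is obtained by pulling the weight $\omega$ into the $L^{q(\cdot)}$ norm and applying the variable-exponent H\"older inequality (Lemma~\ref{Holder}) to each factor $\int_B f_i$, while necessity is obtained from the duality representation (Lemma~\ref{lemma:dual}) with test functions of the form $f_i=\omega_i^{-1}g_i$ supported in $B$. The only cosmetic difference is that you track an explicit $(1-\varepsilon)$ in the dual pairing rather than absorbing it into the $\lesssim_{\vec p(\cdot)}$ constant as the paper does.
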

\begin{proof}%[\textbf{Proof of Theorem \ref{Cha.Apq}:}]
	On the one hand, we can get
	\begin{align*}	    
	{\left\| {{\A_{\alpha ,B}}(\vec f)} \right\|_{{L^{q( \cdot )}}(\omega )}} &\le {\left| B \right|^{ - m + \frac{\alpha }{n}}}{\left\| {\omega {\chi _B}} \right\|_{{L^{q( \cdot )}}}}\prod\limits_{i = 1}^m {{{\left\| {\omega _i^{ - 1}{\chi _B}} \right\|}_{{L^{{p_i}^\prime ( \cdot )}}}}{{\left\| {{f_i}} \right\|}_{{L^{{p_i}( \cdot )}}({\omega _i})}}}\\
    &\le {\left[ {\vec \omega } \right]_{{A_{\vec p( \cdot ),q( \cdot )}}}}\prod\limits_{i = 1}^m {{{\left\| {{f_i}} \right\|}_{{L^{{p_i}( \cdot )}}({\omega _i})}}}.
	\end{align*}
	On the other hand, by Lemma \ref{lemma:dual}, there exists $g_j={{\omega_j h_j}}$, such that ${\left\| {g_j} \right\|_{{L^{{p_j}( \cdot )}}}} \le 1,j=1,\ldots,m$, we have 
	\begin{align*}
	&{\left| B \right|^{ - m + \frac{\alpha }{n}}}{\left\| {\omega {\chi _B}} \right\|_{{L^{q( \cdot )}}}}\prod\limits_{i = 1}^m {{{\left\| {\omega _i^{ - 1}{\chi _B}} \right\|}_{{L^{{p_i}^\prime ( \cdot )}}}}}\\
	\lesssim&_{\vec{p}(\cdot)} {\left\| {\omega {\chi _B}} \right\|_{{L^{q( \cdot )}}}}{\left| B \right|^{ - m + \frac{\alpha }{n}}}\prod\limits_{i = 1}^m {\int_B {{h_i}} }\\
	\le& {\left\| {{\A_{\alpha ,B}}} \right\|_{{L^{{p_1}( \cdot )}}({\omega _1}) \times  \cdots  \times {L^{{p_m}( \cdot )}}({\omega _m}) \to {L^{q( \cdot )}}(\omega )}}\prod\limits_{i = 1}^m {{{\left\| {{h_i}} \right\|}_{{L^{{p_i}( \cdot )}}({\omega _i})}}}.
	\end{align*}
	Thus,
	$${\left[ {\vec \omega } \right]_{{A_{\vec p( \cdot ),q( \cdot )}}}}{ \lesssim _{\vec p(\cdot)}}{\left\| {{\A_{\alpha ,B}}} \right\|_{{L^{{p_1}( \cdot )}}({\omega _1}) \times  \cdots  \times {L^{{p_m}( \cdot )}}({\omega _m}) \to {L^{q( \cdot )}}(\omega )}}.$$
	This finishes the proof.
\end{proof}

%\begin{rem}
	%Note that the above definitions and results could have been given with the set of all balls $B \subseteq \rn$ replaced by the set of all cubes $Q \subseteq \rn$.
%\end{rem}

The question is whether the multilinear fractional maximal operators $\M_\alpha$ can characterize the weights $A_{\vec{p}(\cdot),q(\cdot)}$. Cruz-Uribe and Guzmán \cite[Theorem 2.4]{Cruz2020} have provided a positive answer when $\alpha=0$ (see {\bf Theorem E}). We now offer a positive answer for $\alpha>0$.

\begin{thm}\label{FMVLP}
Let $p_i( \cdot ) \in LH\cap \P$, $i = 1, \ldots ,m$, with $\frac{1}{{p( \cdot )}} = \sum\limits_{i = 1}^m {\frac{1}{{{p_i}( \cdot )}}}$, $\frac{{{\alpha}}}{n}=\frac{1}{{{p}( \cdot )}}-\frac{1}{{{q}( \cdot )}} \in [0,m)$, and $\vec \omega$ is a multiple weight with $\omega :=\prod\limits_{i = 1}^m {{\omega _i}}$. 
{Then ${{{\mathscr M}_{\alpha }}}$ is bounded from ${L^{{p_1}( \cdot )}}({\omega _1}) \times  \cdots  \times {L^{{p_m}( \cdot )}}({\omega _m})$ to ${L^{q( \cdot )}}(\omega)$ if and only if $\vec \omega  \in {A_{\vec p( \cdot ),q( \cdot )}}$.}
\end{thm}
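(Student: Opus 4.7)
I would split the statement into the two implications and treat necessity first as a short reduction to Theorem~\ref{Cha.Apq}. For every ball $B$ one has the pointwise bound $\mathscr{A}_{\alpha,B}(\vec f)(x) \le c_n\,\mathscr{M}_\alpha(\vec f)(x)\chi_B(x)$, so the operator norm of $\mathscr{A}_{\alpha,B}\colon L^{p_1(\cdot)}(\omega_1)\times\cdots\times L^{p_m(\cdot)}(\omega_m)\to L^{q(\cdot)}(\omega)$ is bounded, uniformly in $B$, by the norm of $\mathscr{M}_\alpha$. Theorem~\ref{Cha.Apq} then immediately yields $\vec\omega\in A_{\vec p(\cdot),q(\cdot)}$, together with a quantitative constant.

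The substantive direction is sufficiency. I would first use a standard three-lattice argument to reduce $\mathscr{M}_\alpha$ to a finite sum of dyadic multilinear fractional maximal operators $\mathscr{M}_\alpha^{\mathscr{D}}$, and then use duality (Lemma~\ref{lemma:dual}) to replace the target norm by the linearised pairing
\[
\int_{\mathbb R^n}\mathscr{M}_\alpha^{\mathscr{D}}(\vec f)(x)\,\omega(x)g(x)\,dx, \qquad \|g\|_{L^{q'(\cdot)}}\le 1,\; g\ge 0.
\]
A Calderón--Zygmund stopping-time argument applied to the multilinear fractional averages $|Q|^{\alpha/n}\prod_i\langle|f_i|\rangle_Q$ produces a sparse family $\mathcal{S}$ of principal cubes with pairwise disjoint main sets $E_Q\subset Q$ satisfying $|E_Q|\gtrsim|Q|$. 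Each dyadic cube contributing to $\mathscr{M}_\alpha^{\mathscr{D}}$ is dominated by its nearest ancestor in $\mathcal{S}$, so the pairing collapses to a sum indexed by $\mathcal{S}$ alone.

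On a fixed $Q\in\mathcal{S}$, the definition of $A_{\vec p(\cdot),q(\cdot)}$ lets me replace the factor $|Q|^{\alpha/n-m}\|\omega\chi_Q\|_{L^{q(\cdot)}}\prod_i\|\omega_i^{-1}\chi_Q\|_{L^{p_i'(\cdot)}}$ by $[\vec\omega]_{A_{\vec p(\cdot),q(\cdot)}}$, which converts the ordinary averages $\langle f_i\rangle_Q$ and $\langle g\omega\rangle_Q$ into weighted averages with respect to $\sigma_i=\omega_i^{-p_i'(\cdot)}$ and $u=\omega^{q(\cdot)}$. By Lemma~\ref{cor:Ainfty} these are $A_\infty$ weights, so the disjointness $|E_Q|\gtrsim|Q|$ transfers to $\sigma_i(E_Q)\gtrsim\sigma_i(Q)$ and $u(E_Q)\gtrsim u(Q)$ via Lemma~\ref{lemma:Ainfty-prop}, and the resulting sum over $\mathcal{S}$ is controlled by the weighted dyadic maximal operators $M_{\sigma_i}^{\mathscr{D}_0}$, whose boundedness on $L^t(\sigma_i)$ for any $t>1$ comes from Lemma~\ref{lemma:wtd-max-bound}. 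A final application of the generalised Hölder inequality (Lemma~\ref{Gu6}) reassembles the bound into $\prod_{i=1}^m\|f_i\|_{L^{p_i(\cdot)}(\omega_i)}$.

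The main obstacle is the exponent bookkeeping between the variable target exponent $q(\cdot)$ and the constant exponents that naturally appear on each cube. To pass back and forth, I would use the auxiliary exponent $\delta(Q)$ from~\eqref{delta} as a proxy for $q(x)$ on $Q$: Lemma~\ref{q-relation} bounds $\|\omega^{-1}\chi_Q\|_{L^{q'(\cdot)}}^{\delta(Q)-q(x)}$ uniformly in $Q$ and $x\in Q$, which is exactly the quantity that appears when one replaces $q(x)$ by $\delta(Q)$ inside the modular (via Lemma~\ref{lemma:mod-norm} and Lemma~\ref{lemma:p-infty-cond}). The log-Hölder tails generated by this replacement are absorbed through Lemmas~\ref{lemma:diening}, \ref{lemma:lemma3.3}, \ref{lemma:infty-bound}, and~\ref{lemma:p-infty-px}, the last allowing the switch from an exponent on $Q$ to the global $q(\cdot)$ at the cost of an integrable tail $(e+|y|)^{-tnq_-}$. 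This is precisely where the fractional parameter $\alpha>0$ complicates the Cruz-Uribe--Guzmán strategy for $\alpha=0$: the sparse sum now couples a geometric factor $|Q|^{\alpha/n}$ to the variable-exponent norms, and the $\delta(Q)$ trick is what makes the coupling harmless. Once this reconciliation is carried out, the rest of the argument follows the $\alpha=0$ blueprint.
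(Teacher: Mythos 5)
Your necessity argument, the dyadic three-lattice reduction, and the choice of auxiliary tools (the Calder\'on--Zygmund stopping cubes with disjoint $E_j^k$, the proxy exponent $\delta(Q)$ from \eqref{delta}, Lemma~\ref{q-relation}, Lemma~\ref{cor:Ainfty}, and Lemma~\ref{lemma:wtd-max-bound}) all agree with the paper. But there is a structural inconsistency in your sufficiency argument that amounts to a real gap. You propose to linearise the target norm by duality and run the sparse/CZ argument on the bilinear pairing $\int\M_\alpha^{\mathscr D}(\vec f)\,\omega g\,dx$; once you do that, there is no modular left in the problem --- the quantity you are bounding is an $L^1$ pairing, not $\int(\cdot)^{q(x)}u(x)\,dx$. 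Yet the exponent-reconciliation scheme you describe, replacing $q(x)$ by $\delta(Q)$ ``inside the modular'' via Lemma~\ref{lemma:mod-norm} and Lemma~\ref{lemma:p-infty-cond}, only acts on a modular. The duality reduction destroys the very object your bookkeeping tools need, and it is not clear how to close the sparse sum otherwise (note that $\sum_i\frac{1}{p_i(\cdot)}+\frac{1}{q'(\cdot)}=1+\frac{\alpha}{n}>1$, so the usual discrete H\"older trick on the pairing does not apply). The paper instead substitutes $f_l\mapsto f_l\sigma_l$ with $\sigma_l=\omega_l^{-p_l'(\cdot)}$ and $u=\omega^{q(\cdot)}$, normalises $\|f_l\|_{L^{p_l(\cdot)}_{\sigma_l}}=1$, and by Lemma~\ref{lemma:mod-norm} reduces the whole theorem to the single modular inequality $\int_{\mathbb R^n}\M^d_\alpha(f_1\sigma_1,\dots,f_m\sigma_m)^{q(x)}u(x)\,dx\lesssim 1$; only then does the CZ decomposition enter, with the exponent $q(x)$ still visible, which is precisely what Lemmas~\ref{q-relation}, \ref{lemma:p-infty-px}, \ref{lemma:p-infty-cond}, and~\ref{lemma:infty-bound} are built to handle.

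Two further ingredients missing from your sketch are in fact essential once you are in the modular framework: (i) the splitting $f_i=f_i\chi_{\{f_i>1\}}+f_i\chi_{\{f_i\le1\}}$, giving the four terms $I_1,\dots,I_4$, so that the ``large'' pieces satisfy $\int_Q h\sigma_i\le 1$ by the normalisation while the ``small'' pieces are pointwise bounded by $1$; and (ii) for the mixed terms, the partition of the CZ cubes into the families $\mathscr F$, $\mathscr G$, $\mathscr H$ (small near the origin, large near the origin, far from the origin), which gates whether Lemma~\ref{lemma:p-infty-cond} or the $q_\infty$/$\delta(Q)$ comparison can be invoked. Replace the duality linearisation by the modular reduction and add these two decompositions, and your outline essentially becomes the paper's Section~\ref{longproof}.
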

%The proof of Theorem \ref{FMVLP} is given at the end of this paper, since the process is very complicated.
%\begin{rem}
%	With the notation of the above. If $\frac{1}{{{p_i(\cdot)}}} - \frac{1}{{{q_i(\cdot)}}} = \frac{{{\alpha _i}}}{n} \in (0,1)$, $i = 1, \ldots ,m$, with $\frac{1}{{q( \cdot )}} = \sum\limits_{i = 1}^m {\frac{1}{{{q_i}( \cdot )}}}$, $\alpha  = \sum\limits_{i = 1}^m {{\alpha _i}}$, and $\vec \omega  \in \prod\limits_{i = 1}^m {{A_{{p_i}( \cdot ),{q_i}( \cdot )}}}$, then
%	\begin{align*}
%	{\left\| {{\M_\alpha }(\vec f)} \right\|_{{L^{q( \cdot )}}(\omega )}} 
	%\le {\left\| {\prod\limits_{i = 1}^m {{{\calM}_{\alpha_i} }({f_i})} } \right\|_{{L^{q( \cdot )}}(\omega )}}
	%\lesssim \prod\limits_{i = 1}^m {{{\left\| {{f_i}} \right\|}_{{L^{{p_i}( \cdot )}}({\omega _i})}}},
	%\end{align*}
%	where the last step holds since we used Lemma \ref{Gu6} and Theorem {\bf D}.
%\end{rem}

The case $\alpha=0$ was covered by a recent result of [D. Cruz-Uribe and O. M. Guzmán;Publ. Mat. 2020]\cite{Cruz2020},and this theoerm represents the characterization for $\alpha > 0.$ 

{Combining Theorem 2.2 and $A_\infty$ extrapolation theorem of Rubio de Francia leads to the following conclusion.}
\begin{thm}[multilinear Hardy-Littlewood-Sobolev theorem on weighted variable exponents Lebesgue spaces]\label{FIVLP}
	Let  $p_i( \cdot ) \in LH\cap \P$, $i = 1, \ldots ,m$, with $\frac{1}{{p( \cdot )}} = \sum\limits_{i = 1}^m {\frac{1}{{{p_i}( \cdot )}}}$, $\frac{{{\alpha}}}{n}=\frac{1}{{{p}( \cdot )}}-\frac{1}{{{q}( \cdot )}} \in (0,m)$, and $\vec \omega$ is a multiple weight with $\omega :=\prod\limits_{i = 1}^m {{\omega _i}}$. Then ${{{\mathscr T}_{\alpha }}}$ is bounded from ${L^{{p_1}( \cdot )}}({\omega _1}) \times  \cdots  \times {L^{{p_m}( \cdot )}}({\omega _m})$ to ${L^{q( \cdot )}}(\omega)$ if and only if $\vec \omega  \in {A_{\vec p( \cdot ),q( \cdot )}}$.
\end{thm}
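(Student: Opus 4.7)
The plan is to establish the two directions separately, using Theorem \ref{Cha.Apq} for necessity and reducing the sufficiency for $\mathscr{I}_\alpha$ to the (already established) case $\mathscr{M}_\alpha$ via $A_\infty$ extrapolation.

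For necessity, observe that for either operator $\mathscr{T}_\alpha \in \{\mathscr{M}_\alpha, \mathscr{I}_\alpha\}$ and every ball $B$ with nonnegative $\vec f$, the pointwise bound $\mathscr{A}_{\alpha,B}(\vec f)(x) \lesssim \mathscr{T}_\alpha(\vec f)(x)$ holds for $x \in B$: for $\mathscr{M}_\alpha$ this is immediate from the defining supremum, while for $\mathscr{I}_\alpha$ restricting the integration to $B^m$ and using $\sum_{i=1}^m |x-y_i| \lesssim |B|^{1/n}$ gives
$$\mathscr{I}_\alpha(\vec f)(x) \gtrsim |B|^{-m+\alpha/n}\prod_{i=1}^m \int_B f_i(y_i)\,dy_i = \mathscr{A}_{\alpha,B}(\vec f)(x).$$
Consequently, boundedness of $\mathscr{T}_\alpha$ forces uniform-in-$B$ boundedness of $\mathscr{A}_{\alpha,B}$, and Theorem \ref{Cha.Apq} yields $\vec\omega \in A_{\vec p(\cdot), q(\cdot)}$.

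For sufficiency, the case $\mathscr{T}_\alpha = \mathscr{M}_\alpha$ is precisely Theorem \ref{FMVLP}. For $\mathscr{T}_\alpha = \mathscr{I}_\alpha$, I would invoke the classical sharp-maximal estimate (due to Moen) $M^\#_\delta(\mathscr{I}_\alpha \vec f)(x) \lesssim \mathscr{M}_\alpha \vec f(x)$ for $0<\delta<1/m$, combined with the Fefferman--Stein inequality (Lemma \ref{FS1}), to produce the one-weight control
$$\|\mathscr{I}_\alpha \vec f\|_{L^{p_0}(\omega_0)} \lesssim \|\mathscr{M}_\alpha \vec f\|_{L^{p_0}(\omega_0)}$$
for every $\omega_0 \in A_\infty$ and every $0<p_0<\infty$. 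Feeding this into the $A_\infty$ extrapolation theorem (Lemma \ref{prop:Ainfty-extrapol}) applied to the family $\mathcal{F}=\{(\mathscr{I}_\alpha \vec f, \mathscr{M}_\alpha \vec f)\}$ with target exponent $q(\cdot)$ and product weight $\omega$, the same inequality upgrades to
$$\|\mathscr{I}_\alpha \vec f\|_{L^{q(\cdot)}(\omega)} \lesssim \|\mathscr{M}_\alpha \vec f\|_{L^{q(\cdot)}(\omega)},$$
after which Theorem \ref{FMVLP} dominates the right-hand side by $\prod_{i=1}^m \|f_i\|_{L^{p_i(\cdot)}(\omega_i)}$.

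The main obstacle is verifying the structural hypotheses of the $A_\infty$ extrapolation lemma in our fractional multivariable setting: one must exhibit $s \leq q_-$ for which $\omega^s \in A_{q(\cdot)/s}$ and $M$ is bounded on $L^{(q(\cdot)/s)'}(\omega^{-s})$. Taking $s = 1/m \leq 1 < q_-$ reduces the first condition to $\omega^{1/m} \in A_{m q(\cdot)}$, which is exactly the conclusion of Lemma \ref{vweight4}. The second follows from the log-Hölder regularity of $q(\cdot)$ (inherited from the $p_i(\cdot)$ through $1/q(\cdot) = 1/p(\cdot) - \alpha/n$), the self-duality of the $LH$ class, and the Cruz-Uribe--Fiorenza--Neugebauer characterization of $A_{p(\cdot)}$ by boundedness of $M$ (Theorem C). Carefully bookkeeping these auxiliary exponents $mq(\cdot)$ and $(mq(\cdot))'$ is the one technical delicacy of an otherwise transparent extrapolation argument.
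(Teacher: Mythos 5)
Your proposal follows the paper's proof almost exactly: necessity from the pointwise domination $\A_{\alpha,B} \lesssim \M_\alpha \lesssim \I_\alpha(|\cdot|,\dots,|\cdot|)$ plus Theorem~\ref{Cha.Apq}; sufficiency via the sharp-maximal pointwise bound $M^\#_{1/m}(\I_\alpha\vec f) \lesssim \M_\alpha\vec f$ (the paper cites this to Chen--Xue rather than Moen, but it is the same estimate), then $A_\infty$ extrapolation (Lemma~\ref{prop:Ainfty-extrapol}) with $s=1/m$, using Lemma~\ref{vweight4} to get $\omega^{1/m}\in A_{mq(\cdot)}$ and Theorem~C (via duality and $LH$ self-stability) for boundedness of $M$ on $L^{(mq(\cdot))'}(\omega^{-1/m})$, followed by Theorem~\ref{FMVLP} on the right-hand side.

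The one point you gloss over is a genuine, if routine, technical step. Lemma~\ref{prop:Ainfty-extrapol} requires the left-hand entries $f$ of the extrapolation pairs to satisfy $\|f\|_{L^{p_0}(\omega_0)}<\infty$ (and later $\|f\|_{L^{q(\cdot)}(\omega)}<\infty$) a~priori. Your family $\{(\I_\alpha\vec f,\M_\alpha\vec f)\}$ does not obviously satisfy this, since $\I_\alpha\vec f$ need not lie in any weighted $L^{p_0}$ for $\vec f\in (L_c^\infty)^m$. The paper fixes this by extrapolating with the truncated family
\begin{equation*}
\F=\Bigl\{\bigl(\min(|\I_\alpha\vec f|,N)\chi_{B(0,N)},\,M^\#_{1/m}(\I_\alpha\vec f)\bigr):\vec f\in (L_c^\infty)^m,\ N\in\N\Bigr\},
\end{equation*}
for which the left entries are compactly supported and bounded, hence in every $L^{q_0}(\omega_0)$, and then passing to the limit $N\to\infty$ by the Fatou property for variable norms (Lemma~\ref{lemma:fatou}), concluding with density of $L_c^\infty$ in $L^{p_i(\cdot)}(\omega_i)$. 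You should insert this truncation-and-Fatou step; otherwise the extrapolation lemma cannot be applied as stated. With that addition, the argument is complete and matches the paper's.
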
	
\begin{proof}%[\textbf{Proof of Theorem \ref{FIVLP}:}]
	The necessity also follows immediately from Theorem \ref{Cha.Apq} or Theorem \ref{FMVLP} since 
	$${\A_{\alpha ,B}}(\vec f)(x) \lesssim {\M_\alpha }(\vec f)(x) \lesssim {\I_\alpha }(\left| {{f_1}} \right|, \cdots ,\left| {{f_m}} \right|)(x).$$
	When $\vec \omega  \in {A_{\vec p( \cdot ),q( \cdot )}}$, we prove the sufficiency as follows.
	
	In \cite[Proposition 3.2]{Chen-Xue}, Chen and Xue proved the following pointwise estimate, for any $\vec f \in {\left( {L_c^\infty } \right)^m}$,
	\begin{align*}
	{M}_{\frac{1}{m}}^\# ({\I_\alpha }(\vec f))(x) \lesssim {\M_\alpha }(\vec f)(x).
	\end{align*}
	By the above estimate and Theorem \ref{FMVLP}, we have 
	\begin{align*}
	{\left\| {{M}_{\frac{1}{m}}^\# ({\I_\alpha }(\vec f))} \right\|_{{L^{q( \cdot )}}(\omega )}}
	\lesssim {\left\| {{\M_\alpha }(\vec f)} \right\|_{{L^{q( \cdot )}}(\omega )}} 
	\lesssim \prod\limits_{i = 1}^m {{{\left\| {{f_i}} \right\|}_{{L^{{p_i}( \cdot )}}({\omega _i})}}}.
	\end{align*}
{where $M$ is the Hardy-Littlewood maximal operator.}
 
Since $\vec{\omega} \in A_{\vec{p}(\cdot), q(\cdot)}$, by Lemma \ref{vweight4}, we can choose $s = \frac{1}{m}$, ensuring ${\omega^s} \in A_{\frac{q(\cdot)}{s}}$ and ${\omega^{-s}} \in A_{\left(\frac{q(\cdot)}{s}\right)'}$. 
By the definition of $LH$, ${\left(\frac{q(\cdot)}{s}\right)}' \in LH$. Given $\frac{q(\cdot)}{s} \in \P$, it follows that ${\left(\frac{q(\cdot)}{s}\right)}'$ belongs to both $LH$ and $\P$. Consequently, $M$ is bounded on ${L^{{\left( \frac{q(\cdot)}{s} \right)}}({{\omega ^{-s}}})}$.
	
The extrapolation pairs are be given by
	\[ \F = \big\{ \big( \min(|{{{\I_{\alpha} (\vec f) }}}|,N)\chi_{B(0,N)},
	M_{\frac{1}{m}}^\#({{\I_{\alpha} (\vec f)}})\big) : \vec f \in {(L_c^\infty )^m}, N\in \n
	\big\}. \]
	Since $$\min(|{\I_{\alpha }(\vec f)}|,N)\chi_{B(0,N)} \in L^\infty_c \subseteq L^{q_0}(\omega_0)$$
for any \(q_0 > 0\) and a weight \(\omega_0\) in the \(A_\infty\) class, Lemma \ref{FS1} confirms that equation \(\eqref{eqn:extrapol1}\) holds for every pair in family \(\mathcal{F}\). This result is also supported by Lemma \ref{prop:Ainfty-extrapol}, which expands its applicability to \(A_\infty\) conditions.
	$$
	\min(|{\I_{\alpha }(\vec f)}|,N)\chi_{B(0,N)} \in L^\qq(\omega),
	$$
	and 
	$$
	\|\min(|{\I_{\alpha }(\vec f)}|,N)\chi_{B(0,N)} \|_{L^\qq(\omega)}
	\lesssim \| {M}_{\frac{1}{m}}^\#({\I_{\alpha }(\vec f)})\|_{L^\qq(\omega)}.
	$$
By Lemma $\ref{lemma:fatou}$ and the above estimates, we can deduce that
	\begin{align*}
	\|{{\I_{\alpha }(\vec f)}} \|_{L^\qq(\omega)}
	\le& \mathop {\lim \inf }\limits_{N \to \infty } \|\min(|{\I_{\alpha }(\vec f)}|,N)\chi_{B(0,N)} \|_{L^\qq(\omega)}\\
	\lesssim& \| {M}_{\frac{1}{m}}^\#({\I_{\alpha }(\vec f)})\|_{L^\qq(\omega)}\\
	\lesssim&\prod\limits_{i = 1}^m {{{\left\| {{f_i}} \right\|}_{{L^{{p_i}( \cdot )}}({\omega _i})}}}.
	\end{align*}
	
	Finally, the desired conclusion can follow from the limiting argument, since $L^\infty_c$ is dense in $L^{p_j(\cdot)}(\omega_j)$, $j=1,\dots,m$, see also \cite[Lemma 3.1]{Cruz2017}. 
\end{proof}

\begin{thm}\label{CFIVLP2}
	Let  $p_i( \cdot ) \in LH\cap \P$, $i = 1, \ldots ,m$, with $\frac{1}{{p( \cdot )}} = \sum\limits_{i = 1}^m {\frac{1}{{{p_i}( \cdot )}}}$, $\frac{{{\alpha}}}{n}=\frac{1}{{{p}( \cdot )}}-\frac{1}{{{q}( \cdot )}} \in (0,\frac{m}{t})$, and $\vec \omega  \in {A_{\vec p( \cdot ),q( \cdot )}}$ with ${{\vec \omega }^t} = (\omega _1^t, \cdots ,\omega _m^t) \in {A_{\frac{{\vec p( \cdot )}}{t},\frac{{q( \cdot )}}{t}}}$, for some $1 < t < \mathop {\min }\limits_{1 \le i \le m} \left\{ {p_i^ - } \right\}$. 
	If $\vec b \in {\left( {\BMO} \right)^m}$, then 
	\begin{align}
	{\left\| {{\T_{\alpha ,\sum {\vec b} }}} \right\|_{{L^{{p_1}( \cdot )}}({\omega _1}) \times  \cdots  \times {L^{{p_m}( \cdot )}}({\omega _m}) \to {L^{q( \cdot )}}(\omega )}} \lesssim& \sum\limits_{i = 1}^m {{{\left\| {{b_i}} \right\|}_{\BMO}}}.
	\end{align}
\end{thm}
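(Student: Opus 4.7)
The plan is to mimic the strategy used for Theorem \ref{FIVLP}: combine a pointwise sharp-maximal estimate for the commutator with an $A_{\infty}$ extrapolation/Fefferman--Stein argument, and then reduce the right-hand side to already-available boundedness of $\T_\alpha$ and $\M_\alpha$ on weighted variable Lebesgue spaces.

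First I would record the standard Coifman--Lerner--P\'erez type pointwise sharp-maximal inequality for the commutator of a multilinear fractional operator (the multilinear/fractional analogue established in \cite{chen-wu,xue7}): for some $0<\delta<1/m$ and with the parameter $t>1$ as in the hypothesis,
\begin{equation*}
M_{\delta}^{\#}\bigl(\T_{\alpha,\sum\vec b}(\vec f)\bigr)(x)\;\lesssim\;\sum_{i=1}^{m}\|b_i\|_{\BMO}\Bigl(\M_{\alpha,L^{t}}(\vec f)(x)+M_{1/m}\bigl(\T_\alpha\vec f\bigr)(x)\Bigr),
\end{equation*}
where $\M_{\alpha,L^{t}}(\vec f)(x)=\sup_{Q\ni x}|Q|^{\alpha/n}\prod_{i=1}^{m}\langle|f_i|^{t}\rangle_{Q}^{1/t}$.

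The next step is to estimate each term on the right in $L^{q(\cdot)}(\omega)$. For the first term I use the identity $\M_{\alpha,L^{t}}(\vec f)=\M_{t\alpha}(|\vec f|^{t})^{1/t}$, so that
\begin{equation*}
\|\M_{\alpha,L^{t}}(\vec f)\|_{L^{q(\cdot)}(\omega)}=\|\M_{t\alpha}(|\vec f|^{t})\|_{L^{q(\cdot)/t}(\omega^{t})}^{1/t}.
\end{equation*}
Because $1<t<\min_i p_i^{-}$, the scaled exponents $p_i(\cdot)/t$ still belong to $LH\cap\P$, the scaled fractional order $t\alpha$ lies in $(0,mn)$, and by hypothesis $\vec\omega^{t}\in A_{\vec p(\cdot)/t,q(\cdot)/t}$. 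Thus Theorem \ref{FMVLP} applied to $\M_{t\alpha}$ yields the desired bound $\prod_i\|f_i\|_{L^{p_i(\cdot)}(\omega_i)}$ after taking $1/t$-th power. For the second term, Lemma \ref{vweight4} gives $\omega^{1/m}\in A_{mq(\cdot)}$, which via Theorem C makes $M$ bounded on $L^{mq(\cdot)}(\omega^{1/m})$, hence $M_{1/m}$ is bounded on $L^{q(\cdot)}(\omega)$; combining with Theorem \ref{FIVLP} (applicable because $\vec\omega\in A_{\vec p(\cdot),q(\cdot)}$) controls $\|M_{1/m}(\T_\alpha\vec f)\|_{L^{q(\cdot)}(\omega)}$ by $\prod_i\|f_i\|_{L^{p_i(\cdot)}(\omega_i)}$.

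It remains to pass from $M_{\delta}^{\#}\bigl(\T_{\alpha,\sum\vec b}(\vec f)\bigr)$ back to $\T_{\alpha,\sum\vec b}(\vec f)$ in $L^{q(\cdot)}(\omega)$; this is the step I expect to be the main technical obstacle, since Lemma \ref{FS1} is stated only for constant exponents. I would invoke the $A_{\infty}$ extrapolation machinery of Lemma \ref{prop:Ainfty-extrapol} with the family of extrapolation pairs
\begin{equation*}
\F=\Bigl\{\bigl(\min(|\T_{\alpha,\sum\vec b}(\vec f)|,N)\chi_{B(0,N)},\;M_{\delta}^{\#}(\T_{\alpha,\sum\vec b}(\vec f))\bigr):\vec f\in(L_{c}^{\infty})^{m},\;N\in\n\Bigr\},
\end{equation*}
choosing $s=1/m$ so that (by Lemma \ref{vweight4}) $\omega^{s}\in A_{q(\cdot)/s}$ and $\omega^{-s}$ gives $M$ bounded on $L^{(q(\cdot)/s)'}(\omega^{-s})$. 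The constant-exponent hypothesis \eqref{eqn:extrapol1} for each $\omega_0\in A_\infty$ holds by Lemma \ref{FS1}, and the truncation in $\F$ ensures finite $L^{p_0}(\omega_0)$ norms so that Lemma \ref{prop:Ainfty-extrapol} applies; then Lemma \ref{lemma:fatou} lets me send $N\to\infty$. Finally, a density argument ($L_{c}^{\infty}$ dense in $L^{p_i(\cdot)}(\omega_i)$, cf.\ \cite[Lemma 3.1]{Cruz2017}) promotes the estimate to all $\vec f\in L^{p_1(\cdot)}(\omega_1)\times\cdots\times L^{p_m(\cdot)}(\omega_m)$, completing the proof. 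The delicate point throughout is ensuring, via the auxiliary hypothesis $\vec\omega^{t}\in A_{\vec p(\cdot)/t,q(\cdot)/t}$, that the $L^{t}$-upgrade of $\M_\alpha$ appearing in the pointwise bound can still be controlled with the correct product norm; without this extra assumption the sharp-maximal estimate would lose too much.
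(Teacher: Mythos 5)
Your proposal follows essentially the same route as the paper's proof. Both proceed by (i) the Chen--Xue pointwise sharp-maximal estimate for $\I_{\alpha,\sum\vec b}$ controlling it by an $L\log L$ (or $L^t$) multilinear fractional maximal function plus $M_{1/m}(\I_\alpha\vec f)$, (ii) the rescaling $\M_{\alpha,L^t}(\vec f)=\M_{t\alpha}(|\vec f|^t)^{1/t}$ together with the hypothesis $\vec\omega^t\in A_{\vec p(\cdot)/t,q(\cdot)/t}$ and Theorem \ref{FMVLP}/\ref{FIVLP} for the $\M_{t\alpha}$ piece, (iii) Lemma \ref{vweight4} plus Theorem C for the $M_{1/m}$ piece, and (iv) the $A_\infty$-extrapolation/Fefferman--Stein argument with truncation and density exactly as in the proof of Theorem \ref{FIVLP}. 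The only cosmetic difference is that the paper first states the sharp-function bound with $\M_{L\log L,\alpha}^j$ (citing Chen--Xue) and then separately invokes $\M_{L\log L,\alpha}\lesssim\M_{t\alpha}(|\cdot|^t)^{1/t}$, whereas you fold these two steps into a single pointwise inequality; otherwise the proofs coincide.
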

\begin{proof}%[\textbf{Proof of Theorem \ref{CFIVLP2}:}]
	In \cite{Chen-Xue}, Chen and Xue proved the following pointwise estimate for ${\I_{\alpha ,\sum {\vec b} }}$. 
	Let $\delta  \in (0,\varepsilon ) \cap (0,\frac{1}{m}]$ such that for any $j=1,\cdots,m$,
	\begin{align*}
	M_\delta ^\# (\I_{\alpha ,\sum {\vec b} }(\vec f))(x){ \lesssim_\varepsilon }\left( {\sum\limits_{i = 1}^m {{{\left\| {{b_i}} \right\|}_{\BMO}}} } \right)\left( {\M_{L\log L,\alpha }^j(\vec f)(x) + {M_\varepsilon }({\I_\alpha }(\vec f))(x)} \right).
	\end{align*}
	By \cite[p. 1258]{Lerner} and \cite[(7.3)]{Chen-Xue},
	\begin{align}
	\M_{L\log L,\alpha}^j(\vec f)(x) \lesssim {\M_{L\log L,\alpha }}(\vec f)(x)
	\lesssim& \mathop {\sup }\limits_{x \in B} {\left( {{{\left| B \right|}^{\frac{{t\alpha }}{n}}}\prod\limits_{i = 1}^m {\frac{1}{{\left| B \right|}}\int_B {{{\left| {{f_i}} \right|}^{{t}}}} } } \right)^{\frac{1}{{{t}}}}} \notag\\
	=&{\left( {{\M_{t\alpha }}({{\left| {{f_1}} \right|}^t}, \cdots ,{{\left| {{f_m}} \right|}^t})(x)} \right)^{\frac{1}{t}}}.\label{LlogLa}
	\end{align}
	
Taking $\varepsilon  = \frac{1}{m}$ and combining Lemmas \ref{vweight4}, \ref{Gu6}, Theorem \ref{FIVLP}, and Theorem ${\bf C}$, we have
	\begin{align}
	&{\left\| {\M_{L\log L}^j(\vec f) + {{M}_{\frac{1}{m}} }({\I_\alpha }(\vec f))} \right\|_{{L^{q( \cdot )}}(\omega )}} \notag\\
	\lesssim&{\left\| {{{\left( {{\M_{t\alpha }}({{\left| {{f_1}} \right|}^t}, \cdots ,{{\left| {{f_m}} \right|}^t})(x)} \right)}^{\frac{1}{t}}}} \right\|_{{L^{q( \cdot )}}(\omega )}} + \left\| {M({{\left| {{\I_\alpha }(\vec f)} \right|}^{\frac{1}{m}}})} \right\|_{{L^{mq( \cdot )}}({\omega ^{\frac{1}{m}}})}^m \notag\\
	\lesssim&\left\| {{\M_{t\alpha }}({{\left| {{f_1}} \right|}^t}, \cdots ,{{\left| {{f_m}} \right|}^t})(x)} \right\|_{{L^{\frac{q( \cdot )}{t}}}(\omega^t )}^{\frac{1}{t}} + \left\| {{{\left| {{\I_\alpha }(\vec f)} \right|}^{\frac{1}{m}}}} \right\|_{{L^{mq( \cdot )}}({\omega ^{\frac{1}{m}}})}^m \notag\\
	\lesssim& \prod\limits_{i = 1}^m {\left\| {{{\left| {{f_i}} \right|}^t}} \right\|_{{L^{\frac{{{p_i}( \cdot )}}{t}}}({\omega _i}^t)}^{\frac{1}{t}}} +\prod\limits_{i = 1}^m {{{\left\| {{f_i}} \right\|}_{{L^{{p_i}( \cdot )}}({\omega _i})}}} \notag\\
	=&\prod\limits_{i = 1}^m {{{\left\| {{f_i}} \right\|}_{{L^{{p_i}( \cdot )}}({\omega _i})}}}. \label{similar}
	\end{align}
	By the extrapolation methods (see the proof of Theorem \ref{FIVLP}),
	\begin{align*}
	\|{\I_{\alpha,\sum {\vec b} }(\vec f)} \|_{L^\qq(\omega)}
	\lesssim \| {M}_\delta^\#({\I_{\alpha,\sum {\vec b}}}(\vec f))\|_{L^\qq(\omega)}
	\lesssim \left( {\sum\limits_{i = 1}^m {{{\left\| b \right\|}_{\BMO}}} } \right)\prod\limits_{i = 1}^m {{{\left\| {{f_i}} \right\|}_{{L^{{p_i}( \cdot )}}({\omega _i})}}}.
	\end{align*}

The proof can be done similarly using density.
\end{proof}

\begin{thm}\label{CFIVLP1}
	Let  $p_i( \cdot ) \in LH\cap \P$, $i = 1, \ldots ,m$, with $\frac{1}{{p( \cdot )}} = \sum\limits_{i = 1}^m {\frac{1}{{{p_i}( \cdot )}}}$, and $\frac{{{\alpha}}}{n}=\frac{1}{{{p}( \cdot )}}-\frac{1}{{{q}( \cdot )}} \in (0,m)$. 
	If $\vec b \in {\left( {\BMO} \right)^m}$, then
	\begin{align}
	{\left\| {{\T_{\alpha ,\prod {\vec b} }}} \right\|_{{L^{{p_1}( \cdot )}} \times  \cdots  \times {L^{{p_m}( \cdot )}} \to {L^{q( \cdot )}}}} \lesssim& \prod\limits_{i = 1}^m {{{\left\| {{b_i}} \right\|}_{\BMO}}}. 
	\end{align}
\end{thm}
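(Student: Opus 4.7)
My plan is to mirror the proof of Theorem \ref{CFIVLP2} and run an induction on the number $k$ of symbols that actually appear in the iterated commutator. For each $k\in\{0,\ldots,m\}$ and each subset $S\subseteq\{1,\ldots,m\}$ with $|S|=k$, let $\T_{\alpha,\Pi\vec b_S}$ denote the operator obtained from $\T_\alpha$ by inserting the factor $(b_i(x)-b_i)$ only for $i\in S$. I will prove by induction on $k$ the bound
\[
\|\T_{\alpha,\Pi\vec b_S}(\vec f)\|_{L^{q(\cdot)}}\lesssim \prod_{i\in S}\|b_i\|_{\BMO}\cdot\prod_{j=1}^m\|f_j\|_{L^{p_j(\cdot)}}.
\]
The case $k=0$ is exactly Theorem \ref{FIVLP} applied with the trivial weight $\omega\equiv 1\in A_{\vec p(\cdot),q(\cdot)}$, and the case $k=1$ follows from Theorem \ref{CFIVLP2} applied with $\vec\omega\equiv 1$, for which the auxiliary requirement $\vec\omega^{\,t}\in A_{\vec p(\cdot)/t,q(\cdot)/t}$ is automatic for any $1<t<\min_i p_i^-$.

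For the inductive step I would invoke the Chen--Xue/Chen--Wu pointwise sharp-maximal estimate for the iterated commutator: for $0<\delta<\varepsilon$ chosen small enough,
\begin{align*}
M_\delta^{\#}\bigl(\T_{\alpha,\Pi\vec b}(\vec f)\bigr)(x)
\lesssim&\; \prod_{i=1}^m\|b_i\|_{\BMO}\cdot \M_{L\log L,\alpha}(\vec f)(x)\\
&+\sum_{\emptyset\ne S\subsetneq\{1,\ldots,m\}}\prod_{i\in S^{c}}\|b_i\|_{\BMO}\cdot M_\varepsilon\bigl(\T_{\alpha,\Pi\vec b_S}(\vec f)\bigr)(x).
\end{align*}
Applying Lemma \ref{FS1} with the trivial weight $\omega\equiv 1\in A_\infty$ dominates $\|\T_{\alpha,\Pi\vec b}(\vec f)\|_{L^{q(\cdot)}}$ by $\|M_\delta^{\#}(\T_{\alpha,\Pi\vec b}(\vec f))\|_{L^{q(\cdot)}}$. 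The $\M_{L\log L,\alpha}$ contribution is handled exactly as in the chain \eqref{LlogLa}--\eqref{similar}: pick $1<t<\min_i p_i^-$ small enough that $t\alpha/n<m$, dominate pointwise by $\M_{t\alpha}(|f_1|^t,\ldots,|f_m|^t)^{1/t}$, and invoke Theorem \ref{FIVLP} with trivial weight on the exponents $\vec p(\cdot)/t, q(\cdot)/t$. For each lower-order term $M_\varepsilon(\T_{\alpha,\Pi\vec b_S}(\vec f))$ with $|S|<m$, the boundedness of $M$ on $L^{q(\cdot)/\varepsilon}$ (which holds whenever $\varepsilon<q_-$, since $q(\cdot)/\varepsilon\in LH\cap\P$ follows from $q(\cdot)\in LH\cap\P$) reduces matters to $\|\T_{\alpha,\Pi\vec b_S}(\vec f)\|_{L^{q(\cdot)}}$, which is controlled by the inductive hypothesis.

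The main technical obstacle will be parameter bookkeeping: one must verify that $t$, $\delta$, $\varepsilon$ can be chosen so that the pointwise sharp-maximal estimate, Lemma \ref{FS1}, the variable-exponent maximal theorem on $L^{q(\cdot)/\varepsilon}$, and Theorem \ref{FIVLP} (requiring $t\alpha/n<m$ and $t<\min_i p_i^-$) are all simultaneously applicable. A secondary subtlety is that the sharp-maximal estimate holds \emph{a priori} only for $\vec f\in(L_c^\infty)^m$, so the final conclusion is obtained via the truncation and Fatou density argument from the proof of Theorem \ref{FIVLP} --- approximating $\T_{\alpha,\Pi\vec b}(\vec f)$ by $\min(|\T_{\alpha,\Pi\vec b}(\vec f)|,N)\chi_{B(0,N)}$ and passing to the limit via Lemma \ref{lemma:fatou}, using the density of $L_c^\infty$ in each $L^{p_j(\cdot)}$.
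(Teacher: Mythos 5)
Your proposal follows essentially the same route as the paper: Xue's pointwise sharp-maximal estimate for $\I_{\alpha,\Pi\vec b}$, the domination $\M_{L\log L,\alpha}(\vec f)\lesssim \M_{t\alpha}(|f_1|^t,\ldots,|f_m|^t)^{1/t}$ combined with Theorem \ref{FIVLP} at trivial weight to control the main term, iteration over the lower-order commutators (which the paper only gestures at via ``repeated iteration'' and which you correctly unpack as an induction on the number of symbols), and the $A_\infty$-extrapolation plus Fatou/density closing argument. Two cosmetic corrections: the sum in your sharp-maximal estimate should include $S=\emptyset$ so as to recover the term $\prod_{i}\|b_i\|_{\BMO}\,M_\varepsilon(\I_\alpha(\vec f))$ that appears explicitly in Xue's estimate as quoted in the paper (it is in any case the easiest summand, handled directly by Theorem \ref{FIVLP}); and Lemma \ref{FS1} is a constant-exponent statement, so one cannot apply it ``with trivial weight'' directly on $L^{q(\cdot)}$ --- instead it supplies the $L^{p_0}(\omega_0)$ input pairs that are fed into Lemma \ref{prop:Ainfty-extrapol}, exactly as in the proof of Theorem \ref{FIVLP}, a step you do acknowledge in your final paragraph.
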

\begin{proof}%[\textbf{Proof of Theorem \ref{CFIVLP1}:}]
	In \cite{xue7}, Xue also established the pointwise estimate for ${{\I_{\alpha ,\prod {\vec b} }}}$.
	Let $\delta  \in (0,\varepsilon ) \cap (0,\frac{1}{m}]$, such that
	\begin{align*}
	&\quad \quad{{M}_{\delta}^\# }({\I_{\alpha ,\prod {\vec b} }}(\vec f))(x)\\
	&\lesssim_{\varepsilon} \prod\limits_{i = 1}^m {{{\left\| {{b_i}} \right\|}_{\BMO}}} \left( {{\M_{L\log L,\alpha }}(\vec f)(x) + {{M}_\varepsilon }({\I_\alpha }(\vec f))(x)} \right)\\
    &+ \sum\limits_{j = 1}^{m - 1} {\sum\limits_{\sigma  \in C_j^m}^{} {\left( {\prod\limits_{i = 1}^j {{{\left\| {{b_{\sigma (i)}}} \right\|}_{\BMO}}} } \right){{M}_\varepsilon }({\I_{\prod {{{\vec b}_{\sigma '}}} }}(\vec f))(x)} } \\
	&:=J_1+J_2.
	\end{align*}
	By taking $r \in (1,\frac{{mn}}{\alpha })$, $\varepsilon \in (0,\frac{1}{m}]$, ${\alpha _i} = \frac{\alpha }{m}$, and $\frac{1}{{{q_i}( \cdot )}} = \frac{1}{{{p_i}( \cdot )}}{\rm{ - }}\frac{{{\alpha _i}}}{n}$, and using \eqref{LlogLa}, we obtain
	\begin{align}
	{\left\| {{J_1}} \right\|_{{L^{q( \cdot )}}(\omega )}} 
	\le& \left( {\prod\limits_{i = 1}^m {{{\left\| {{b_i}} \right\|}_{\BMO}}} } \right){\left\| {{{\left( {\M({{\left| {{f_1}} \right|}^r}, \cdots ,{{\left| {{f_m}} \right|}^r})} \right)}^{\frac{1}{r}}} + {M_\varepsilon }({\I_\alpha }(\vec f))} \right\|_{{L^{q( \cdot )}}}} \notag\\
	\lesssim& \left( {\prod\limits_{i = 1}^m {{{\left\| {{b_i}} \right\|}_{\BMO}}} } \right)\left( {\prod\limits_{i = 1}^m {\left\| {{{\left| {{f_i}} \right|}^r}} \right\|_{{L^{\frac{{{p_i}( \cdot )}}{r}}}}^{\frac{1}{r}} + {{\left\| {{\I_\alpha }(\vec f)} \right\|}_{{L^{q( \cdot )}}}}} } \right) \notag\\
	\lesssim& \prod\limits_{i = 1}^m {{{\left\| {{b_i}} \right\|}_{\BMO}}} {\left\| {{f_i}} \right\|_{{L^{{p_i}( \cdot )}}}}.  \notag
	\end{align}
	The term ${\| {{J_2}} \|_{{L^{q( \cdot )}}(\omega )}}$ can be simplified using Lemma \ref{FS1} and extrapolation methods, followed by repeated iteration.
	
	By the extrapolation methods (see the proof of Theorem \ref{FIVLP}),
	\begin{align*}
	{\left\| {{\I_{\alpha ,\prod {\vec b} }}(\vec f)} \right\|_{{L^{q( \cdot )}}}} \lesssim {\left\| {{M}_{\frac{1}{m}} ^\# ({\I_{\alpha ,\prod {\vec b} }}(\vec f))} \right\|_{{L^{q( \cdot )}}}} \lesssim \prod\limits_{i = 1}^m {{{\left\| {{b_i}} \right\|}_{\BMO}}} {\left\| {{f_i}} \right\|_{{L^{{p_i}( \cdot )}}}}.
	\end{align*}
The proof can be accomplished from the density.
\end{proof}

\begin{rem}
It is obvious to find that the results of Theorems \ref{FMVLP} and \ref{FIVLP} generalized Theorems {\bf A--E}.
\end{rem}

\section{\bf New variable matrix weights ${{\mathbb{A}}_{p( \cdot ),q( \cdot )}}$}\label{cha.maApq}
In this section, we will develop characterizations for the newly introduced variable matrix weights, denoted as ${{\mathbb{A}}_{p( \cdot ),q( \cdot )}}$, and introduce several notations and concepts related to the theory of matrix weights.

Define \(\Lpp(\Omega; \R^d)\) as the set of vector-valued, measurable functions \(\vf : \Omega \to \R^d\) such that
\[\| \vf\|_{\Lpp(\Omega;\R^d)} := \| |\vf|\|_{\Lpp(\Omega)} <\infty.\]

A matrix function maps a set \(\Omega \subseteq \mathbb{R}^n\) to the collection of \(d \times d\) matrices. This function is considered measurable if each of its components is a measurable function.

Let \(\calS_d\) denote the collection of \(d \times d\) self-adjoint and positive semi-definite matrices. A matrix weight is a measurable matrix function \(W : \Omega \to \calS_d\), where \(\left\| W \right\| \in L_{{\rm loc}}^1(\Omega)\), meaning the eigenvalues of \(W\) are locally integrable. A matrix weight is invertible if it is positive definite almost everywhere.

Let \( W: \Omega \to \mathcal{S}_d \) is a matrix weight, we define

\[{L^{p( \cdot )}}(W,\Omega )=\left\{ {{\bf{f}}:\Omega \to \mathbb{R}^d:{{\left\| {\bf{f}} \right\|}_{{L^{p( \cdot )}}(W,\Omega )}}: = {{\left\| {\left| {W{\bf{f}}} \right|} \right\|}_{{L^{p( \cdot )}}(\Omega )}} < \infty } \right\}.\]

In order to study the matrix weighted boundedness of fractional-type operators, we establish the following matrix weights.
\begin{defn}
Let $\pp, \qq \in \P(\Omega)$ with $\frac{1}{{p( \cdot )}} - \frac{1}{{q( \cdot )}} = \frac{\alpha }{n} \in \left( {0,1} \right)$, and $W:\Omega \to {\calS}_d$ be a matrix weight. We say $W \in {{\bbA}_{p( \cdot ),q( \cdot )}}$, if it satisfies
	\begin{align*}
{\left[ W \right]_{{{\mathbb{A}}_{p( \cdot ),q( \cdot )}}}} := \mathop {\sup }\limits_{Q \subseteq \Omega } {\left| Q \right|^{\frac{\alpha }{n}-1}}{\left\| {{{\left\| {\left\| {{W^{ - 1}}(y)W(x)} \right\|{\chi _Q}(y)} \right\|}_{{L_{y}^{p'( \cdot )}}(\Omega )}}{\chi _Q}(x)} \right\|_{L_x^{q( \cdot )}(\Omega )}}< \infty.
	\end{align*}
\end{defn}

When \(\alpha = 0\), we have \({{\bbA}_{p(\cdot), q(\cdot)}} = {{\bbA}_{p(\cdot)}}\), as established by Cruz-Uribe and Penrod in \cite{Cruz2023}. In this context, \(L_x^{p(\cdot)}(\Omega)\) indicates that the integration in the norm is performed with respect to the variable \(x\). Similar to scalar weights, when \(\Omega = \mathbb{R}^n\), an equivalent definition can be obtained by replacing cubes with balls, though for more complex domains, transitioning between the two may be necessary. Thus, for any matrix weight \(W\) defined on a domain \(\Omega\), it is implicitly assumed that \(W\) satisfies the \(\mathbb{A}_{p(\cdot), q(\cdot)}\) matrix condition in an extended domain \(\Omega'\).

For any finite set of vector-valued functions $\{\vf_i\}_{i=1}^m \subseteq \Lpp(\Omega;\R^d)$, we have
\begin{equation}\label{SumNorm:equiv}
\frac{1}{m} \sum_{i=1}^m\left\|\mathbf{f}_i\right\|_{L^{p(\cdot)}\left(\Omega, \mathbb{R}^d\right)} \leq\left\|\sum_{i=1}^m\left|\mathbf{f}_i\right|\right\|_{L^{p(\cdot)}(\Omega)} \leq \sum_{i=1}^m\left\|\mathbf{f}_i\right\|_{L^{p(\cdot)}\left(\Omega, \mathbb{R}^d\right)}
\end{equation}
The upper boundedness is immediate from the triangle inequality. The lower
boundedness holds since for each $i$, $|\vf_i(x)| \leq \sum_{j=1}^m
|\vf_j(x)|$.

Recall that the operator norm of a $d\times d$ matrix $A$ is defined by
$$\left\| A \right\| = \mathop {\sup }\limits_{\vv \in {\R^d},|\vv| = 1} \left| {A\vv} \right|.$$

If \(W \in \mathcal{S}_d\), it has \(d\) non-negative eigenvalues, denoted by \(\lambda_i\) for \(1 \leq i \leq d\), and there exists an orthogonal matrix \(U\) such that \(U^T W U\) is diagonal. This diagonal matrix can be represented as \(D(\lambda_1, \ldots, \lambda_d)\) or simply \(D(\lambda_i)\). Moreover, when \(W\) is a measurable matrix function with values in \(\mathcal{S}_d\), the corresponding matrix function \(U\) can also be chosen to be measurable.

\begin{lem}[\cite{Cruz2023}, Proposition 3.2]\label{VectorDuality}
	Let \(\pp \in \P(\Omega)\). Then for any \(\vf \in \Lpp(\Omega; \mathbb{R}^d)\), there exists \(\vg \in \Lcpp(\Omega; \mathbb{R}^d)\) with \(\|\vg\|_{\Lcpp(\Omega; \mathbb{R}^d)} \leq d\) such that
	\[ \|\vf\|_{\Lpp(\Omega;\R^d)} \leq 4 \int_{\Omega} \vg(x) \cdot \vf(x) \, dx.\]
\end{lem}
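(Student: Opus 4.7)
The plan is to reduce the vector-valued duality to the scalar case given in Lemma \ref{lemma:dual} by a coordinate-wise argument. Writing $\vf = (f_1, \ldots, f_d)$ with each $f_i \in \Lpp(\Omega)$, I will apply scalar duality to each component individually and assemble a vector $\vg$ out of the resulting scalar duals, then control its norm by a coordinate-wise triangle inequality. The factor $d$ in the conclusion is precisely the price one pays for decoupling into coordinates.

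More precisely, for each $i \in \{1, \ldots, d\}$ Lemma \ref{lemma:dual} supplies a scalar function $g_i \in \Lcpp(\Omega)$ with $\|g_i\|_{\Lcpp(\Omega)} \leq 1$ satisfying $\|f_i\|_{\Lpp(\Omega)} \leq 4\int_\Omega |f_i g_i|\,dx$. Replacing $g_i$ by $\sgn(f_i)\,|g_i|$ (which preserves the $\Lcpp$ norm), I may arrange that the signed integral agrees with the absolute-value integral, giving $\|f_i\|_{\Lpp(\Omega)} \leq 4\int_\Omega f_i g_i\,dx$. Set $\vg := (g_1, \ldots, g_d)$.

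For the norm bound on $\vg$, use the pointwise inequality $|\vg(x)| = \bigl(\sum_i g_i(x)^2\bigr)^{1/2} \leq \sum_i |g_i(x)|$ and the scalar triangle inequality in $\Lcpp(\Omega)$ to obtain
$$\|\vg\|_{\Lcpp(\Omega;\R^d)} \leq \sum_{i=1}^d \|g_i\|_{\Lcpp(\Omega)} \leq d.$$
Similarly, from $|\vf(x)| \leq \sum_i |f_i(x)|$ and the same triangle inequality,
$$\|\vf\|_{\Lpp(\Omega;\R^d)} = \bigl\| |\vf| \bigr\|_{\Lpp(\Omega)} \leq \sum_{i=1}^d \|f_i\|_{\Lpp(\Omega)} \leq 4\sum_{i=1}^d \int_\Omega f_i g_i\,dx = 4 \int_\Omega \vg(x) \cdot \vf(x)\,dx,$$
which is the claimed inequality.

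The argument is largely mechanical once scalar duality is in hand, and I do not anticipate a deep obstacle. The main points requiring care are the use of the strong (existence-of-near-extremizer) form of Lemma \ref{lemma:dual}, rather than just the supremum identity, and the sign-alignment step used to promote the absolute-value integral to a signed one; both are standard constructions via the modular functional in variable Lebesgue space theory. An alternative approach would be to apply scalar duality directly to $|\vf|$ to obtain $h \geq 0$ and set $\vg := h\,\vf/|\vf|$, which would yield the sharper bound $\|\vg\|_{\Lcpp(\Omega;\R^d)} \leq 1$; however, since the statement asks only for the bound $d$, the coordinate-wise construction above is the most transparent route and matches the form of the sum-norm estimate \eqref{SumNorm:equiv} already recorded.
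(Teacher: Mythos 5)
Your proof is correct, and the coordinate-wise reduction to scalar duality is the natural way to obtain the stated constant $d$; the factor $d$ in the conclusion strongly suggests the cited reference (Cruz-Uribe and Penrod) uses the same construction, so your route very likely matches theirs. Your alternative construction $\vg = h\,\vf/|\vf|$, with $h\ge 0$ a scalar near-extremizer for $|\vf|$, is a genuinely different (radial rather than coordinate-wise) argument that yields the sharper bound $\|\vg\|_{\Lcpp(\Omega;\R^d)}\le 1$, which is a useful observation. The one point needing a bit more precision is the invocation of Lemma~\ref{lemma:dual}: as quoted in the paper it only asserts an $\approx$ with implicit constants, whereas you use the quantitative existence form $\|f_i\|_{\Lpp}\le 4\int_\Omega|f_ig_i|\,dx$. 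This does hold --- the explicit constant $4$ appears in \cite{Cruz2013}, Theorem~2.34, and a maximizing $g_i$ exists because the unit ball of $\Lcpp$ is weakly compact, $\Lcpp$ being reflexive when $\pp\in\P$ --- but it is not a literal consequence of the $\approx$ statement as written, so the quantitative form of the norm conjugate formula should be cited directly rather than inferred from the equivalence.
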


\begin{lem}[\cite{roudenko_matrix-weighted_2002}, Lemma 3.2]\label{opNorm:equiv}
	If $\{\ve_1, \ldots, \ve_t\}$ is any orthonormal basis in $\R^t$, then for any $d\times t$ matrix $V$, we have
	\begin{align*}
		\left\| V \right\| \approx \sum\limits_{i = 1}^t {\left| {V{\ve_i}} \right|},
	\end{align*}
	with implicit constants depending only on $t$.
\end{lem}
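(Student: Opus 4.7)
The plan is to prove both inequalities directly from the definition of the operator norm and the orthonormality of the basis, with the constants $1$ and $t$ respectively; no deeper machinery is required.

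For the upper estimate $\|V\| \lesssim \sum_{i=1}^t |V\ve_i|$, I would fix an arbitrary unit vector $\vv \in \R^t$ and expand it in the given orthonormal basis as $\vv = \sum_{i=1}^t c_i \ve_i$, where $c_i = \vv \cdot \ve_i$. Parseval's identity gives $\sum_i |c_i|^2 = |\vv|^2 = 1$, so in particular each $|c_i| \le 1$. Applying $V$ and using the triangle inequality yields
\begin{equation*}
|V\vv| \;=\; \Bigl|\sum_{i=1}^t c_i\, V\ve_i\Bigr| \;\le\; \sum_{i=1}^t |c_i|\,|V\ve_i| \;\le\; \sum_{i=1}^t |V\ve_i|.
\end{equation*}
Taking the supremum over unit vectors $\vv$ gives $\|V\| \le \sum_{i=1}^t |V\ve_i|$ with constant $1$.

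For the lower estimate $\sum_{i=1}^t |V\ve_i| \lesssim_t \|V\|$, I would simply use the fact that each $\ve_i$ is a unit vector, so by the definition of the operator norm, $|V\ve_i| \le \|V\|$ for every $i$. Summing over $i = 1, \dots, t$ produces
\begin{equation*}
\sum_{i=1}^t |V\ve_i| \;\le\; t\,\|V\|,
\end{equation*}
which is the reverse direction with constant $t$.

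There is no substantive obstacle here: the argument is the standard one-line reduction to orthonormal expansion, and the two estimates combine to give the claimed equivalence with implicit constants depending only on $t$. The only point worth a brief remark is that the constant in the upper direction is in fact dimension-free, while the lower direction genuinely loses a factor of $t$; both are sharp in simple test examples (take $V$ a projection onto a single basis vector, respectively $V$ the identity).
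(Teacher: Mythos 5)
Your proof is correct, and since the paper itself does not prove this lemma (it is quoted verbatim from Roudenko, Lemma 3.2, with only a citation), there is no in-paper argument to compare against. The two-sided estimate you give—orthonormal expansion plus triangle inequality for the upper bound, and the trivial bound $|V\ve_i|\le\|V\|$ summed $t$ times for the lower—is exactly the standard, essentially one-line proof of this equivalence, and your remarks about sharpness of the constants are accurate.
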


Second, recall that for any two self-adjoint $d\times d$ matrices $V$ and $W$, we have
\begin{align*}
	\left\| {VW} \right\| = \left\| {{{(VW)}^*}} \right\| = \left\| {{W^*}{V^*}} \right\| = \left\| {WV} \right\|.
\end{align*}
\begin{lem}[\cite{bownik_extrapolation_2022}, Theorem 4.11]\label{EllipsoidApprox}
Let \( r: \mathbb{R}^n \times \mathbb{R}^d \to [0, \infty) \) be any measurable norm function. Then, there exists a  measurable matrix function \( W: \Omega \to \mathcal{S}_d \) that ensures for any point \( x \in \mathbb{R}^n \) and vector \( \mathbf{v} \in \mathbb{R}^d \),
$$
r(x, \mathbf{v}) \leq|W(x) \mathbf{v}| \leq \sqrt{d} r(x, \mathbf{v})
$$
\end{lem}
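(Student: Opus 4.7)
The plan is to apply John's ellipsoid theorem pointwise in $x$ and then address the measurability of the resulting matrix function. For each fixed $x$, the map $v \mapsto r(x,v)$ is a norm on $\mathbb{R}^d$, so its closed unit ball $K_x := \{v \in \mathbb{R}^d : r(x,v) \leq 1\}$ is a symmetric convex body. John's theorem provides a unique ellipsoid $\mathcal{E}_x$ of maximal volume inside $K_x$, and its symmetric scaling satisfies $\mathcal{E}_x \subseteq K_x \subseteq \sqrt{d}\,\mathcal{E}_x$. Since $\mathcal{E}_x$ is centered, there is a unique $W(x) \in \mathcal{S}_d$ (positive definite) with $\mathcal{E}_x = \{v : |W(x)v| \leq 1\}$; this furnishes the candidate matrix function.

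Next I would verify the pointwise double inequality by homogeneity. Fix $x$ and a nonzero $v$, and put $t = r(x,v)$. Then $v/t$ lies on $\partial K_x$, hence in $\sqrt{d}\,\mathcal{E}_x$, which gives $|W(x)v| \leq \sqrt{d}\, r(x,v)$. Conversely, setting $s = |W(x)v|$, the vector $v/s$ lies in $\mathcal{E}_x \subseteq K_x$, so $r(x,v) \leq s = |W(x)v|$. The case $v = 0$ is trivial, and together these are precisely the claimed bounds.

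The main obstacle is the measurability of $W$. The fact to exploit is that the John ellipsoid depends \emph{continuously} on the convex body in the Hausdorff metric, a consequence of uniqueness together with a standard compactness argument on the space of symmetric convex bodies. To transfer this to measurability without imposing any regularity on $r$, I would use that joint measurability of $r$ implies measurability of $x \mapsto r(x, v_j)$ for each element of a fixed countable dense set $\{v_j\} \subseteq \mathbb{R}^d$; since the Hausdorff distance between symmetric convex bodies is determined by countably many such sections, this upgrades to measurability of the set-valued map $x \mapsto K_x$ into the Polish space of symmetric convex bodies. Composing with the continuous John map yields a measurable map $x \mapsto \mathcal{E}_x$, and since the positive definite matrix representing a centered ellipsoid is a continuous function of the ellipsoid, $x \mapsto W(x)$ is measurable. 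As an alternative, one could invoke a measurable selection theorem (Kuratowski--Ryll-Nardzewski) applied to the set-valued map returning the unique John matrix. The delicate part is rigorously justifying the continuous-dependence statement used above; once it is in place, measurability follows routinely.
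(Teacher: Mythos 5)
The paper does not prove this lemma; it cites it directly to Bownik--Cruz-Uribe (Theorem 4.11), so there is no internal proof to compare against. That said, your proposal is the standard and, given the factor $\sqrt{d}$, essentially the only natural argument: the $\sqrt{d}$ is precisely the John constant for origin-symmetric convex bodies, so applying John's theorem to $K_x=\{v:r(x,v)\le 1\}$ and writing the maximal inscribed ellipsoid as $\{v:|W(x)v|\le 1\}$ with $W(x)$ positive definite is the right construction, and your homogeneity computation correctly converts the set inclusions $\mathcal{E}_x\subseteq K_x\subseteq\sqrt{d}\,\mathcal{E}_x$ into the two inequalities $r(x,v)\le|W(x)v|\le\sqrt{d}\,r(x,v)$.

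On measurability, your outline is sound but you have correctly identified where the real work lies. Two small remarks. First, the clean way to see that $x\mapsto K_x$ is Borel into the space of symmetric convex bodies with the Hausdorff metric is via support functions: $h_{K_x}(u)=r^{*}(x,u)=\sup_{v\ne 0}\langle u,v\rangle/r(x,v)$, and for fixed $u$ this supremum can be taken over a countable dense set of $v$, giving measurability in $x$; since $d_H(K_x,L)=\sup_{|u|=1}|h_{K_x}(u)-h_L(u)|$ and $h_{K_x}$ is continuous in $u$, the sup again reduces to a countable set. Second, the continuity of the John map $K\mapsto\mathcal{E}(K)$ does follow from uniqueness plus Blaschke selection as you indicate, but one must also note that the limiting set is genuinely an ellipsoid (the John matrices stay in a compact subset of the positive definite cone because the $K$'s in a Hausdorff-convergent sequence are uniformly sandwiched between two fixed balls). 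Once these points are filled in, composing the Borel map $x\mapsto K_x$ with the continuous John map and the continuous passage from a centered ellipsoid to its defining matrix gives a measurable $W$. The Kuratowski--Ryll-Nardzewski alternative you mention also works, since by uniqueness the selection it produces is forced to be the John matrix. In short, your proposal is correct and, modulo the routine but nontrivial measurability details you flag yourself, it is a complete proof along the expected lines.
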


Given a matrix weight \(W\) and a cube \(Q \subseteq \Omega\), we define the associated reducing operator, which can be viewed as an \(\Lpp(\Omega)\) average of \(W\) on \(Q\). For each \(x \in \Omega\), define the norm \(r(x, \cdot): \mathbb{R}^d \to [0, \infty)\) as \(r(x, \vv) = |W(x)\vv|\), and let \(r^*(x, \cdot)\) be its dual norm, defined by \(r^*(x, \vv) = |W^{-1}(x) \vv|\). 

For a specified cube \(Q\), we define \(\langle r \rangle_{\pp,Q}: \mathbb{R}^d \to [0, \infty)\) by
\begin{equation}\label{AvgNorm}
	\langle r\rangle_{\pp,Q} (\vv)
	:= 1/|Q|^{1/p_Q} \|\chi_Q(\cdot)r(\cdot,\vv)\|_{\Lpp(\Omega)}
	\approx_{d} 1/|Q|^{1/p_Q} \| \chi_Q(\cdot)\vv\|_{\Lpp(W,\Omega)},
\end{equation}
where $p_Q$ is the harmonic mean of $\pp$ on $Q$, defined by
$$\frac{1}{{{p_Q}}} = \frac{1}{{\left| Q \right|}}\int_Q {\frac{{dx}}{{p(x)}}}.$$

Since \(\|\cdot\|_{\Lpp(W, \Omega)}\) is a norm, it follows that \(\langle r \rangle_{\pp, Q}\) defines a norm on \(\mathbb{R}^d\). By Lemma \ref{EllipsoidApprox}, there exists a positive-definite, self-adjoint (constant) matrix \(W_Q^{q(\cdot)}\) such that \(\langle r \rangle_{{q(\cdot)}, Q}(\vv) \approx |W_Q^{q(\cdot)} \vv|\) for all \(\vv \in \mathbb{R}^d\). This matrix \(W_Q^{q(\cdot)}\) is called the reducing operator associated with \(r\) on the cube \(Q\). Similarly, the reducing operator associated with \(r^*\) on \(Q\), denoted \(\overline{W}_Q^\cpp\), satisfies \(\langle r^* \rangle_{\cpp, Q}(\vv) \approx |\overline{W}_Q^\cpp \vv|\). Cruz and Penrod \cite{Cruz2023} used reducing operators to provide an equivalent characterization of \(\bbA_{p(\cdot)}\). Here, we also employ reducing operators to characterize \({{\bbA}_{p(\cdot), q(\cdot)}}\).

%$$\left| {W_Q^{q( \cdot )}\vv} \right| \approx {\left| Q \right|^{ - \frac{1}{{{q_Q}}}}}{\left\| {\left| {W( \cdot )\vv} \right|{\chi _Q}( \cdot )} \right\|_{L_{}^{q( \cdot )}(\Omega )}}$$
%$$\left| {\overline{W}_Q^{p'( \cdot )}\vv} \right| \approx {\left| Q \right|^{ - \frac{1}{{{{p'}_Q}}}}}{\left\| {\left| {{W^{ - 1}}( \cdot )\vv} \right|{\chi _Q}( \cdot )} \right\|_{L_{}^{p'( \cdot )}(\Omega )}}$$
\begin{pro}\label{ApqR}
	Let $\pp, \qq \in \P(\Omega)$ with $\frac{1}{{p( \cdot )}} - \frac{1}{{q( \cdot )}} = \frac{\alpha }{n} \in \left( {0,1} \right)$, and $W:\Omega \to {\calS}_d$ be a matrix weight. Then $\bbA_{p( \cdot ),q( \cdot )} = \bbA_{p( \cdot ),q( \cdot )}^R$. More precisely, ${\left[ W \right]_{\bbA_{p( \cdot ),q( \cdot )}^R}} \approx {\left[ W \right]_{\bbA_{p( \cdot ),q( \cdot )}^{}}}$, where the implicit constants are dependent only on $d$ and $${\left[ W \right]_{\bbA_{p( \cdot ),q( \cdot )}^R}}: = \mathop {\sup }\limits_{Q \subseteq \Omega } \left\| {W_Q^{q( \cdot )}\overline{W}_Q^{p'( \cdot )}} \right\|.$$
\end{pro}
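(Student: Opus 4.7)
The plan is to reduce $\|W_Q^{q(\cdot)}\overline{W}_Q^{p'(\cdot)}\|$ to the integrated quantity defining $[W]_{\bbA_{p(\cdot),q(\cdot)}}$ by unfolding operator norms into sums over an orthonormal basis and applying the reducing-operator identity \eqref{AvgNorm} twice. The main tools will be Lemma~\ref{opNorm:equiv} (operator norm equivalent to a sum of images of an orthonormal basis), the bound \eqref{SumNorm:equiv} (a sum of vector-valued $L^{p(\cdot)}$-norms equivalent to the $L^{p(\cdot)}$-norm of the sum of magnitudes), the defining equivalences of $W_Q^{q(\cdot)}$ and $\overline{W}_Q^{p'(\cdot)}$, and the symmetry $\|VW\|=\|WV\|$ for self-adjoint $V,W$ noted in the excerpt.

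First I would fix an orthonormal basis $\{\ve_i\}_{i=1}^d$ of $\mathbb{R}^d$ and apply Lemma~\ref{opNorm:equiv} to write
\[
\|W_Q^{q(\cdot)}\overline{W}_Q^{p'(\cdot)}\| \approx_d \sum_{i=1}^d \bigl|W_Q^{q(\cdot)}\bigl(\overline{W}_Q^{p'(\cdot)}\ve_i\bigr)\bigr|.
\]
Then to each summand I would apply \eqref{AvgNorm} with $\vv=\overline{W}_Q^{p'(\cdot)}\ve_i$, interchange sum and $L^{q(\cdot)}$-norm via \eqref{SumNorm:equiv}, and refold by Lemma~\ref{opNorm:equiv} in reverse to obtain
\[
\|W_Q^{q(\cdot)}\overline{W}_Q^{p'(\cdot)}\| \approx_d |Q|^{-1/q_Q}\,\bigl\|\chi_Q(x)\,\|W(x)\overline{W}_Q^{p'(\cdot)}\|\bigr\|_{L_x^{q(\cdot)}(\Omega)}.
\]
Next, for each fixed $x$ I would run the same unfolding on the pointwise factor $\|W(x)\overline{W}_Q^{p'(\cdot)}\|$: use self-adjointness to rewrite it as $\|\overline{W}_Q^{p'(\cdot)}W(x)\|$, expand via Lemma~\ref{opNorm:equiv}, apply \eqref{AvgNorm} for $\overline{W}_Q^{p'(\cdot)}$ with the vector $W(x)\ve_j$, and refold, yielding
\[
\|W(x)\overline{W}_Q^{p'(\cdot)}\| \approx_d |Q|^{-1/p'_Q}\,\bigl\|\chi_Q(y)\,\|W^{-1}(y)W(x)\|\bigr\|_{L_y^{p'(\cdot)}(\Omega)}.
\]

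Finally I would substitute the second display into the first and collect prefactors. Integrating the pointwise relations $\tfrac{1}{p(x)}+\tfrac{1}{p'(x)}=1$ and $\tfrac{1}{p(x)}-\tfrac{1}{q(x)}=\tfrac{\alpha}{n}$ over $Q$ gives $\tfrac{1}{q_Q}+\tfrac{1}{p'_Q} = 1 - \tfrac{\alpha}{n}$, so the combined prefactor is exactly $|Q|^{\alpha/n-1}$; taking the supremum over cubes $Q\subseteq\Omega$ then yields $[W]_{\bbA_{p(\cdot),q(\cdot)}^R}\approx_d [W]_{\bbA_{p(\cdot),q(\cdot)}}$. The only real care needed is bookkeeping the scope of each equivalence: Lemma~\ref{opNorm:equiv} operates on a single matrix at a point (or on a constant matrix), while \eqref{SumNorm:equiv} operates on vector-valued functions in $L^{p(\cdot)}$. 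All constants introduced depend only on $d$, so no information is lost under the supremum, and the $\alpha=0$ argument of \cite{Cruz2023} extends essentially verbatim, with the harmonic-mean exponent identity being the only new ingredient.
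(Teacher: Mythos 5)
Your proposal is correct and follows essentially the same chain of equivalences as the paper's proof: Lemma~\ref{opNorm:equiv} to unfold operator norms into sums over a basis, \eqref{SumNorm:equiv} to pull sums through the variable-exponent norms, \eqref{AvgNorm} applied once for $W_Q^{q(\cdot)}$ and once for $\overline{W}_Q^{p'(\cdot)}$, and self-adjointness to commute $\|W(x)\overline{W}_Q^{p'(\cdot)}\| = \|\overline{W}_Q^{p'(\cdot)}W(x)\|$; the only difference is the direction of travel (you start from $[W]_{\bbA^R}$, the paper starts from $[W]_{\bbA}$), which is immaterial since every step is a two-sided $\approx_d$. Your exponent bookkeeping via $\tfrac{1}{q_Q}+\tfrac{1}{p'_Q}=1-\tfrac{\alpha}{n}$ is exactly the harmonic-mean identity the paper uses, and it correctly produces the prefactor $|Q|^{\alpha/n-1}$.
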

\begin{proof}[Proof:]
	Let $\{\ve_i\}_{i=1}^d$ be the orthonormal basis in $\R^d$. By Lemma~\ref{opNorm:equiv}, inequality \eqref{SumNorm:equiv}, and the definition of the reducing operators $\overline{W}_Q^\cpp$ and $W_Q^{q(\cdot)}$, we have
	\begin{align*}
		&{\left| Q \right|^{\frac{\alpha }{n} - 1}}{\left\| {{{\left\| {\left\| {W(x){W^{ - 1}}( \cdot )} \right\|{\chi _Q}( \cdot )} \right\|}_{{L^{p'( \cdot )}}(\Omega )}}{\chi _Q}(x)} \right\|_{L_x^{q( \cdot )}(\Omega )}}\\
		\approx& {\left| Q \right|^{\frac{\alpha }{n} - 1}}{\left\| {{{\left\| {\sum\limits_{i = 1}^d {\left| {{W^{ - 1}}( \cdot )W(x){\ve_i}} \right|} {\chi _Q}( \cdot )} \right\|}_{{L^{p'( \cdot )}}(\Omega )}}{\chi _Q}(x)} \right\|_{L_x^{q( \cdot )}(\Omega )}}\\
		\approx&{{\left| Q \right|}^{\frac{\alpha }{n} - 1}}\sum\limits_{i = 1}^d {{{\left\| {{{\left\| {\left| {{W^{ - 1}}( \cdot )W(x){\ve_i}} \right|{\chi _Q}( \cdot )} \right\|}_{{L^{p'( \cdot )}}(\Omega )}}{\chi _Q}(x)} \right\|}_{L_x^{q( \cdot )}(\Omega )}}}\\
		\approx&{{\left| Q \right|}^{\frac{\alpha }{n} - \frac{1}{{{p_Q}}}}}\sum\limits_{i = 1}^d {{{\left\| {\left| {\overline{W}_Q^{p'( \cdot )}W(x){\ve_i}} \right|{\chi _Q}(x)} \right\|}_{L_x^{q( \cdot )}(\Omega )}}}\\
		\approx&{\left| Q \right|^{\frac{\alpha }{n} - \frac{1}{{{p_Q}}}}}{\left\| {\left\| {\overline{W}_Q^{p'( \cdot )}W(x)} \right\|{\chi _Q}(x)} \right\|_{L_x^{q( \cdot )}(\Omega )}}\\
		\approx&{\left| Q \right|^{\frac{\alpha }{n} - \frac{1}{{{p_Q}}}}}{\left\| {\sum\limits_{i = 1}^d {\left| {W(x)\overline{W}_Q^{p'( \cdot )}{\ve_i}} \right|} {\chi _Q}(x)} \right\|_{L_x^{q( \cdot )}(\Omega )}}\\
		\approx&\sum\limits_{i = 1}^d {{{\left| Q \right|}^{\frac{\alpha }{n} - \frac{1}{{{p_Q}}}}}{{\left\| {\left| {W(x)\overline{W}_Q^{p'( \cdot )}{\ve_i}} \right|{\chi _Q}(x)} \right\|}_{L_x^{q( \cdot )}(\Omega )}}}\\
		\approx&\sum\limits_{i = 1}^d {{{\left| Q \right|}^{\frac{\alpha }{n} - \frac{1}{{{p_Q}}} + \frac{1}{{{q_Q}}}}}\left| {\overline{W}_Q^{q( \cdot )}\overline{W}_Q^{p'( \cdot )}{\ve_i}} \right|}\\
		\approx& \left\| {\overline{W}_Q^{q( \cdot )}\overline{W}_Q^{p'( \cdot )}} \right\|,
	\end{align*}
	where the fifth step holds since $\overline{W}_Q^\cpp$ and $W$ are self-adjoint and we can commute them within the operator norm.% The implicit constants are depend on d.
\end{proof}

The fractional averaging operator ${\A_{\alpha ,Q}}$ is given by
$${\A_{\alpha ,Q}}(\vf)(x) = \left( {{{\left| Q \right|}^{ \frac{\alpha}{n}- 1}}\int_Q {\vf(y)dy} } \right){\chi _Q}(x).$$

The fractional Christ-Goldberg maximal operator is defined by the following expression
$${{\calM}_{\alpha ,W}}(\vf)(x) := \mathop {\sup }\limits_{x \in Q} {\left| Q \right|^{\frac{\alpha }{n} - 1}}\int_Q {\left| {W(x){W^{ - 1}}(y)\vf(y)} \right|dy}.$$

The motivation for the definition of the fractional Christ-Goldberg maximal operator stems from an observation related to linear operators. Specifically, if \(T\) is a linear operator, then it satisfies the condition $${\left\| {T(\vf)} \right\|_{{L^{q(\cdot)}}(W,\Omega)}} \le C{\left\| \vf \right\|_{{L^{p(\cdot)}}(W,\Omega)}}$$ if and only if the operator $T_W$,
satisfies \[{\left\| {T_W(\vf)} \right\|_{{L^{q(\cdot)}}(\Omega)}} \le C{\left\| \vf \right\|_{{L^{p(\cdot)}}(\Omega)}},\]
where $ T_W\vf(x) = W(x)T(W^{-1}\vf)(x).$

Obviously, we have 
$$\mathop {\sup }\limits_{Q \subseteq \Omega} \left| {{\A_{\alpha ,Q,W}}(\vf)} \right| \le {\calM_{\alpha ,W}}(\vf)(x).$$

We present the main results of this section as follows which characterizes the matrix weights ${{\bbA}_{p( \cdot ),q( \cdot )}}$.

	\begin{thm}\label{cha.Apq}
Let \(\pp, \qq \in \P(\Omega)\) with the relationship \(\frac{1}{{p( \cdot )}} - \frac{1}{{q( \cdot )}} = \frac{\alpha }{n}\) in \((0,1)\), the operators \({\A_{\alpha ,Q}}\) are uniformly bounded from \({L^{p( \cdot )}}(W,\Omega )\) to \({L^{q( \cdot )}}(W,\Omega )\) uniformly for cubes in \(\Omega\) if and only if \(W\) belongs to \({\bbA_{p( \cdot ),q( \cdot )}}\).
		Moreover, 
		$${\left[ W \right]_{{\bbA}_{p( \cdot ),q( \cdot )}}} \lesssim_d \mathop {\sup }\limits_{Q \subseteq \Omega } {\left\| {{\A_{\alpha ,Q}}} \right\|_{{L^{p( \cdot )}}(W,\Omega ) \to {L^{q( \cdot )}}(W,\Omega )}} \le 4{\left[ W \right]_{{{\bbA}_{p( \cdot ),q( \cdot )}}}}.$$
	\end{thm}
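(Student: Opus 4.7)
The plan is to prove the two directions separately.

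\textbf{Sufficiency} ($W \in \mathbb{A}_{p(\cdot),q(\cdot)} \Rightarrow \sup_Q \|\mathscr{A}_{\alpha,Q}\| \le 4[W]_{\mathbb{A}_{p(\cdot),q(\cdot)}}$). I would insert the identity $W(x)=W(x)W^{-1}(y)W(y)$ under the integral defining $\mathscr{A}_{\alpha,Q}$, producing the pointwise estimate
\[
|W(x)\mathscr{A}_{\alpha,Q}(\vf)(x)| \le |Q|^{\alpha/n-1}\chi_Q(x)\int_Q \|W(x)W^{-1}(y)\|\,|W(y)\vf(y)|\,dy.
\]
The integrand is now a scalar product in $y$, so I would apply the $L^{p(\cdot)}$--$L^{p'(\cdot)}$ H\"older inequality (Lemma \ref{Holder}) in that variable. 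Pulling $\|\vf\|_{L^{p(\cdot)}(W,\Omega)}$ outside and then taking the $L^{q(\cdot)}$ norm in $x$, one recognizes exactly the definition of $[W]_{\mathbb{A}_{p(\cdot),q(\cdot)}}$ (using $\|W(x)W^{-1}(y)\|=\|W^{-1}(y)W(x)\|$ by self-adjointness of both $W(x)$ and $W^{-1}(y)$), yielding the bound with the explicit constant $4$.

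\textbf{Necessity.} I would first invoke Proposition \ref{ApqR} to replace the target quantity $[W]_{\mathbb{A}_{p(\cdot),q(\cdot)}}$ by the equivalent reducing-operator norm $\sup_Q \|W_Q^{q(\cdot)} \overline{W}_Q^{p'(\cdot)}\|$. Fix a cube $Q$ and a unit vector $\mathbf{w}\in\mathbb{R}^d$; since both reducing operators are self-adjoint, $\|W_Q^{q(\cdot)}\overline{W}_Q^{p'(\cdot)}\| = \|\overline{W}_Q^{p'(\cdot)}W_Q^{q(\cdot)}\|$, so it suffices to bound $|\overline{W}_Q^{p'(\cdot)} W_Q^{q(\cdot)} \mathbf{w}|$ by $\|\mathscr{A}_{\alpha,Q}\|$. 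A naive test $\vf=\mathbf{v}\chi_Q$ (with $\mathbf{v}$ constant) only produces $\|W_Q^{q(\cdot)}(W_Q^{p(\cdot)})^{-1}\|\lesssim \|\mathscr{A}_{\alpha,Q}\|$, which is strictly weaker; to generate the $\overline{W}_Q^{p'(\cdot)}$ factor I would instead apply the vector-valued duality (Lemma \ref{VectorDuality}) to the ``twisted'' function $W^{-1}(\cdot)W_Q^{q(\cdot)}\mathbf{w}\,\chi_Q(\cdot)\in L^{p'(\cdot)}(\Omega;\mathbb{R}^d)$, producing $\mathbf{G}\in L^{p(\cdot)}(\Omega;\mathbb{R}^d)$ with $\|\mathbf{G}\|_{L^{p(\cdot)}}\le d$ and
\[
\|W^{-1}(\cdot)W_Q^{q(\cdot)}\mathbf{w}\,\chi_Q\|_{L^{p'(\cdot)}}\lesssim \int_Q (W^{-1}(y)\mathbf{G}(y))\cdot(W_Q^{q(\cdot)}\mathbf{w})\,dy,
\]
after shifting $W^{-1}(y)$ across the inner product by self-adjointness.

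The test function fed into $\mathscr{A}_{\alpha,Q}$ is then $\vf(y)=W^{-1}(y)\mathbf{G}(y)\chi_Q(y)$, which satisfies $\|\vf\|_{L^{p(\cdot)}(W,\Omega)}=\|\mathbf{G}\chi_Q\|_{L^{p(\cdot)}}\le d$ by construction. Writing $\mathscr{A}_{\alpha,Q}(\vf)(x)=|Q|^{\alpha/n-1}\chi_Q(x)\mathbf{U}$ with $\mathbf{U}=\int_Q W^{-1}(y)\mathbf{G}(y)\,dy$, the $L^{q(\cdot)}(W,\Omega)$ norm of the image becomes $\approx |Q|^{\alpha/n-1+1/q_Q}|W_Q^{q(\cdot)}\mathbf{U}|$ via the defining identity of $W_Q^{q(\cdot)}$. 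Estimating $|W_Q^{q(\cdot)}\mathbf{U}|\ge (W_Q^{q(\cdot)}\mathbf{U})\cdot \mathbf{w}/|\mathbf{w}|=\int_Q (W^{-1}(y)\mathbf{G}(y))\cdot W_Q^{q(\cdot)}\mathbf{w}\,dy/|\mathbf{w}|$, applying the dual bound from the previous step, and using the reducing-operator identity $\|W^{-1}(\cdot)W_Q^{q(\cdot)}\mathbf{w}\,\chi_Q\|_{L^{p'(\cdot)}}\approx |Q|^{1/p'_Q}|\overline{W}_Q^{p'(\cdot)}W_Q^{q(\cdot)}\mathbf{w}|$ together with the harmonic-mean identity $1/p_Q-1/q_Q = \alpha/n$ (obtained by averaging $1/p(y)-1/q(y)=\alpha/n$ over $Q$), every power of $|Q|$ cancels and one arrives at $|\overline{W}_Q^{p'(\cdot)} W_Q^{q(\cdot)}\mathbf{w}|\lesssim_d \|\mathscr{A}_{\alpha,Q}\|$, as required.

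The main obstacle I anticipate is identifying the right test function for the necessity direction. The trick of applying Lemma \ref{VectorDuality} to $W^{-1}W_Q^{q(\cdot)}\mathbf{w}\,\chi_Q$ rather than to $W^{-1}\mathbf{w}\,\chi_Q$ is precisely what makes $W_Q^{q(\cdot)}$ and $\overline{W}_Q^{p'(\cdot)}$ appear on opposite sides of the matrix product; without this twist one only recovers the weaker quantity $\|W_Q^{q(\cdot)}(W_Q^{p(\cdot)})^{-1}\|$. Once the test function is in place, the exponent arithmetic collapses cleanly because of the identity $1/p_Q - 1/q_Q = \alpha/n$.
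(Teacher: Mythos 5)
Your proof matches the paper's argument step for step: the sufficiency direction is the same insertion of $W(x)W^{-1}(y)W(y)$ followed by Hölder's inequality (Lemma~\ref{Holder}), and the necessity direction is the same reduction via Proposition~\ref{ApqR} followed by applying the vector duality lemma (Lemma~\ref{VectorDuality}) to $W^{-1}(\cdot)W_Q^{q(\cdot)}\mathbf{w}\,\chi_Q$, so that the resulting test function $W^{-1}\mathbf{G}\chi_Q$ produces both reducing operators. The only cosmetic difference is that you present the chain of inequalities starting from $\|\A_{\alpha,Q}\vf\|_{L^{q(\cdot)}(W,\Omega)}$ and bounding it below, while the paper starts from $|\overline{W}_Q^{p'(\cdot)}W_Q^{q(\cdot)}\vv|$ and bounds it above; the inequalities themselves are identical.
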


\begin{proof}%[\textbf{Proof of Theorem \ref{cha.Apq}:}]
	Sufficiency: by Lemma \ref{Holder}, 
	\begin{align*}
		&{\left\| \left| {{\A_{\alpha ,Q}}(\vf)} \right| \right\|_{{L^{q( \cdot )}}(W,\Omega )}}\\
		\le& 1/{\left| Q \right|}^{ 1-\frac{\alpha }{n}}{\left\| {\left( {\int_Q {\left| {\vf(y)W(y)W(x){W^{ - 1}}(y)} \right|dy} } \right){\chi _Q}(x)} \right\|_{L_x^{q( \cdot )}(\Omega )}}\\
		\le& 1/{\left| Q \right|}^{ 1-\frac{\alpha }{n}}{\left\| {\left( {\int_Q {\left\| {W(x){W^{ - 1}}(y)} \right\|\left| {W(y)\vf(y)} \right|dy} } \right){\chi _Q}(x)} \right\|_{L_x^{q( \cdot )}(\Omega )}}\\
		\le& 4/{\left| Q \right|}^{ 1-\frac{\alpha }{n}}{\left\| {\left( {{{\left\| {\left\| {W(x){W^{ - 1}}(y)} \right\|{\chi _Q}(y)} \right\|}_{L_y^{p'( \cdot )}(\Omega )}}{{\left\| {\vf(y)W(y)} \right\|}_{L_y^{p( \cdot )}(\Omega )}}} \right){\chi _Q}(x)} \right\|_{L_x^{q( \cdot )}(\Omega )}}\\
		\le& 4{\left[ W \right]_{{\bbA_{p( \cdot ),q( \cdot )}}}}{\left\| {\vf} \right\|_{L_{}^{p( \cdot )}(W,\Omega )}}.
	\end{align*}
	
	Necessity:
	By Proposition \ref{ApqR}, we shall show that ${\left[ W \right]_{{\bbA}_{p( \cdot ),q( \cdot )}^R}} < \infty$. Fix a cube  $Q\subseteq \Omega$ and a $\vv \in \R^d$
	with $|\vv|=1$. By Lemma \ref{VectorDuality}, the self-adjointness of $W^{-1}$ and $W_Q^\pp$, and the linearity of the dot product, we have that for some $\vg \in \Lpp(\Omega;\R^d)$ with $\|\vg\|_{\Lpp(\Omega;\R^d)}\leq d$,
	\begin{align*}
		|\overline{W}_Q^\cpp W_Q^{q(\cdot)} \vv| 
		&\approx_d
		|Q|^{-1/p'_Q} \||W^{-1}(\cdot)W_Q^{q(\cdot)} \vv|\chi_Q(\cdot)\|_{\Lcpp(\Omega)}\\
		& \le4 |Q|^{-1/p'_Q} \int_Q W^{-1}(y) W_Q^{q(\cdot)} \vv \cdot \vg(y) \,dy\\
		& =4 |Q|^{-1/p'_Q}  \int_Q \vv \cdot W_Q^{q(\cdot)} W^{-1}(y) \vg(y) \,dy\\
		& =4 |Q|^{-1/p'_Q} \vv \cdot W_Q^{q(\cdot)} \int_Q W^{-1}(y) \vg(y)\, dy\\
		& \le4 |Q|^{-1/p'_Q} \left| W_Q^{q(\cdot)} \int_Q  \vg(y)W^{-1}(y) \,dy\right|\\
		&\lesssim_d |Q|^{{\frac{\alpha }{n} - 1}}\left\| W(x) \int_Q \vg(y)W^{-1}(y) dy\;
		\chi_Q(x)\right\|_{{L_x^{q( \cdot )}(\Omega )}} \\
		&= \|W(x)\A_{\alpha,Q}(\vg W^{-1})(x)\|_{{L_x^{q( \cdot )}(\Omega )}}\\
		&\le {\left\| {{\A_{\alpha ,Q}}} \right\|_{L^{p( \cdot )}(W,\Omega ) \to {L^{q( \cdot )}}(W,\Omega )}} {\left\| {\vg {W^{ - 1}}} \right\|_{{L^{p( \cdot )}}(W,\Omega )}}\\
		&\lesssim_d {\left\| {{\A_{\alpha ,Q}}} \right\|_{L^{p( \cdot )}(W,\Omega ) \to {L^{q( \cdot )}}(W,\Omega )}},
	\end{align*}
	where the third step holds due to the self-adjointness.

	By Lemma \ref{opNorm:equiv},
	$${\left[ W \right]_{{\bbA}_{p( \cdot ),q( \cdot )}^R}} \lesssim_d \mathop {\sup }\limits_{Q \subseteq \Omega } {\left\| {{\A_{\alpha ,Q}}} \right\|_{L^{p( \cdot )}(W,\Omega ) \to {L^{q( \cdot )}}(W,\Omega )}}.$$
Hence we have finished this proof process. 
\end{proof}

Through the above characterization, we can get the variable exponents generalization of the classical results as follows.

	\begin{cor}\label{cor-3.8.1}
Let $\pp, \qq \in \P(\Omega)$ with $\frac{1}{{p( \cdot )}} - \frac{1}{{q( \cdot )}} = \frac{\alpha }{n} \in \left( {0,1} \right)$. If ${{\calM}_{\alpha,W}}$ is bounded from ${L^{p( \cdot )}}(\Omega )$ to ${L^{q( \cdot )}}(\Omega )$, then $W \in {\bbA_{p( \cdot ),q( \cdot )}}$.
	\end{cor}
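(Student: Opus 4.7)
The plan is to reduce the statement to Theorem \ref{cha.Apq} by showing that the boundedness of the fractional Christ--Goldberg maximal operator $\calM_{\alpha,W}$ on unweighted variable Lebesgue spaces dominates the uniform boundedness of the fractional averaging operators $\A_{\alpha,Q}$ on the matrix-weighted spaces $L^{p(\cdot)}(W,\Omega)\to L^{q(\cdot)}(W,\Omega)$. Since Theorem \ref{cha.Apq} asserts that the latter uniform boundedness is in fact equivalent to $W\in \bbA_{p(\cdot),q(\cdot)}$, this reduction closes the proof.

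The first step is the change of variable $\vf=W\vg$, which is the standard bridge between $L^{p(\cdot)}(W,\Omega)$ and $L^{p(\cdot)}(\Omega;\mathbb{R}^d)$, since $\|\vg\|_{L^{p(\cdot)}(W,\Omega)}=\||\vf|\|_{L^{p(\cdot)}(\Omega)}$. For a fixed cube $Q\subseteq \Omega$, I would rewrite
\[
W(x)\A_{\alpha,Q}(\vg)(x)=|Q|^{\frac{\alpha}{n}-1}\int_Q W(x)W^{-1}(y)\vf(y)\,dy\,\chi_Q(x),
\]
using $\vg(y)=W^{-1}(y)\vf(y)$ inside the average. The triangle inequality for the Bochner integral then yields the pointwise bound
\[
|W(x)\A_{\alpha,Q}(\vg)(x)|\le |Q|^{\frac{\alpha}{n}-1}\int_Q |W(x)W^{-1}(y)\vf(y)|\,dy\,\chi_Q(x)\le \calM_{\alpha,W}(\vf)(x),
\]
which is uniform in $Q$.

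Taking the $L^{q(\cdot)}(\Omega)$-norm of both sides and using the hypothesis that $\calM_{\alpha,W}\colon L^{p(\cdot)}(\Omega)\to L^{q(\cdot)}(\Omega)$ is bounded with constant $C$, I obtain
\[
\|\A_{\alpha,Q}(\vg)\|_{L^{q(\cdot)}(W,\Omega)}\le \|\calM_{\alpha,W}(\vf)\|_{L^{q(\cdot)}(\Omega)}\le C\,\||\vf|\|_{L^{p(\cdot)}(\Omega)}=C\,\|\vg\|_{L^{p(\cdot)}(W,\Omega)},
\]
with $C$ independent of $Q$. Thus the family $\{\A_{\alpha,Q}\}_{Q\subseteq\Omega}$ is uniformly bounded from $L^{p(\cdot)}(W,\Omega)$ into $L^{q(\cdot)}(W,\Omega)$, so Theorem \ref{cha.Apq} gives $W\in\bbA_{p(\cdot),q(\cdot)}$ with $[W]_{\bbA_{p(\cdot),q(\cdot)}}\lesssim_d C$.

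There is no serious obstacle here: the argument is essentially the well-known observation that $\sup_{Q\ni x}|\A_{\alpha,Q,W}(\vf)(x)|\le \calM_{\alpha,W}(\vf)(x)$ (already recorded in the discussion preceding Theorem \ref{cha.Apq}), combined with the unweighted-to-weighted substitution $\vf=W\vg$; the only mild care needed is in justifying the pointwise triangle inequality for the vector-valued integrand, which is immediate from $|W(x)W^{-1}(y)\vf(y)|\le \|W(x)W^{-1}(y)\||\vf(y)|$ and Fubini/Minkowski.
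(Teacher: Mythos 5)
Your argument is correct and is essentially identical to the paper's proof: both establish the pointwise bound $|W(x)\A_{\alpha,Q}(\vf)(x)|\le \calM_{\alpha,W}(W\vf)(x)$ (your $\vg$ is the paper's $\vf$, and your $\vf$ is the paper's $W\vf$), take $L^{q(\cdot)}$-norms, and invoke Theorem \ref{cha.Apq}. No meaningful differences.
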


\begin{proof}%[\textbf{Proof of Corollary \ref{cor-3.8.1}:}]
	For any $Q \ni x$, we can obtain
	$$\left| {{\chi _Q}(x){{\left| Q \right|}^{\frac{\alpha }{n} - 1}}\int_Q {W(x)\vf(y)dy} } \right| \le {\chi _Q}(x){\left| Q \right|^{\frac{\alpha }{n} - 1}}\int_Q {\left| {W(x)\vf(y)} \right|dy}  \le {\calM_{\alpha ,W}}(W\vf)(x).$$
	By Theorem \ref{cha.Apq}, we can complete the proof by
	$$\mathop {\sup }\limits_{Q \subseteq \Omega } {\left\| {\left| {{\A_{\alpha ,Q}}(\vf)} \right|} \right\|_{{L^{q( \cdot )}}(W,\Omega )}} \le {\left\| {{\calM_{\alpha ,W}}(W\vf)} \right\|_{{L^{q( \cdot )}}(\Omega )}} \lesssim {\left\| \vf \right\|_{{L^{p( \cdot )}}(W,\Omega )}}.$$
And therefore we complete this Corollary. 
\end{proof}	
 Before presenting the final result, we still need to introduce the following lemma.
\begin{lem}\label{triangleinequality}
	Let $\pp, \qq \in \P(\Omega)$ with $\frac{1}{{p( \cdot )}} - \frac{1}{{q( \cdot )}} = \frac{\alpha }{n} \in \left( {0,1} \right)$. If $\omega_1,\ldots,\omega_l \in A_{p( \cdot ),q( \cdot )}$, then $\sum\limits_{i = 1}^l {{\omega _i}} \in A_{p( \cdot ),q( \cdot )}$.
\end{lem}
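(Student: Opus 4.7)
The plan is to handle the two factors appearing in the $A_{p(\cdot),q(\cdot)}$ characteristic separately: I will split the first factor via the triangle inequality for Luxemburg norms, and I will dominate the reciprocal factor term-by-term by exploiting pointwise monotonicity. Set $\omega:=\sum_{i=1}^{l}\omega_{i}$ and fix an arbitrary ball $B\subseteq\mathbb{R}^{n}$; the target quantity is
$$|B|^{\alpha/n-1}\,\|\omega\chi_{B}\|_{L^{q(\cdot)}}\,\|\omega^{-1}\chi_{B}\|_{L^{p'(\cdot)}}.$$

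First, I would apply the triangle inequality in $L^{q(\cdot)}$ to the nonnegative functions $\omega_{1}\chi_{B},\dots,\omega_{l}\chi_{B}$ to obtain
$\|\omega\chi_{B}\|_{L^{q(\cdot)}}\le \sum_{i=1}^{l}\|\omega_{i}\chi_{B}\|_{L^{q(\cdot)}}$.
For the reciprocal factor, the key observation is that, because all $\omega_{i}$ are nonnegative, $\omega\ge \omega_{i}$ pointwise a.e.\ for every $i$, hence $\omega^{-1}\le \omega_{i}^{-1}$ pointwise. Monotonicity of the Luxemburg norm on nonnegative functions (immediate from the definition of the modular $\rho_{p'(\cdot)}$) then yields $\|\omega^{-1}\chi_{B}\|_{L^{p'(\cdot)}}\le \|\omega_{i}^{-1}\chi_{B}\|_{L^{p'(\cdot)}}$ for every $i\in\{1,\dots,l\}$.

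To finish, I would pair the $i$-th summand of the first bound with the uniform estimate $\|\omega_{i}^{-1}\chi_{B}\|_{L^{p'(\cdot)}}$ coming from the second factor, giving
$$|B|^{\alpha/n-1}\|\omega\chi_{B}\|_{L^{q(\cdot)}}\|\omega^{-1}\chi_{B}\|_{L^{p'(\cdot)}}\le \sum_{i=1}^{l}|B|^{\alpha/n-1}\|\omega_{i}\chi_{B}\|_{L^{q(\cdot)}}\|\omega_{i}^{-1}\chi_{B}\|_{L^{p'(\cdot)}}\le \sum_{i=1}^{l}[\omega_{i}]_{A_{p(\cdot),q(\cdot)}}.$$
Taking the supremum over balls $B$ then produces $[\omega]_{A_{p(\cdot),q(\cdot)}}\le \sum_{i=1}^{l}[\omega_{i}]_{A_{p(\cdot),q(\cdot)}}<\infty$, as required. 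There is essentially no serious obstacle here; the only verification worth spelling out is that the Luxemburg norm $\|\cdot\|_{L^{p'(\cdot)}}$ is monotone with respect to pointwise inequalities between nonnegative functions, which is immediate from the monotonicity of the modular in the absolute value of the argument.
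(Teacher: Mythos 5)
Your proof is correct and follows essentially the same approach as the paper: split $\|\omega\chi_B\|_{L^{q(\cdot)}}$ by the triangle inequality and dominate $\|\omega^{-1}\chi_B\|_{L^{p'(\cdot)}}$ term-by-term using the pointwise bound $\omega^{-1}\le\omega_i^{-1}$ together with monotonicity of the Luxemburg norm. The only cosmetic difference is that the paper works with cubes $Q$ rather than balls $B$, which the paper notes are interchangeable for this class of weights.
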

\begin{proof}[Proof:]
	For every cube $Q$,
	\begin{align*}
		&{\left| Q \right|^{\frac{1}{{p( \cdot )}} - \frac{1}{{q( \cdot )}} - 1}}{\left\| {{\chi _Q}{{\left( {\sum\limits_{i = 1}^l {{\omega _i}} } \right)}^{ - 1}}} \right\|_{{L^{p'( \cdot )}}(\Omega )}}{\left\| {{\chi _Q}\left( {\sum\limits_{i = 1}^l {{\omega _i}} } \right)} \right\|_{L_{}^{q( \cdot )}(\Omega )}}\\
		\le& \sum\limits_{i = 1}^l {{{\left| Q \right|}^{\frac{1}{{p( \cdot )}} - \frac{1}{{q( \cdot )}} - 1}}{{\left\| {{\chi _Q}{{\left( {\sum\limits_{i = 1}^l {{\omega _i}} } \right)}^{ - 1}}} \right\|}_{{L^{p'( \cdot )}}(\Omega )}}{{\left\| {{\chi _Q}{\omega _i}} \right\|}_{L_{}^{q( \cdot )}(\Omega )}}}\\
		\le& \sum\limits_{i = 1}^l {{{\left| Q \right|}^{\frac{1}{{p( \cdot )}} - \frac{1}{{q( \cdot )}} - 1}}{{\left\| {{\chi _Q}{\omega _i}^{ - 1}} \right\|}_{{L^{p'( \cdot )}}(\Omega )}}{{\left\| {{\chi _Q}{\omega _i}} \right\|}_{L_{}^{q( \cdot )}(\Omega )}}}\\
		\le& \sum\limits_{i = 1}^l {{{\left[ {{\omega _i}} \right]}_{{A_{p( \cdot ),q( \cdot )}}}}}.
	\end{align*}
	Thus, we have 
	$${\left[ {\sum\limits_{i = 1}^l {{\omega _i}} } \right]_{{A_{p( \cdot ),q( \cdot )}}}} \le \sum\limits_{i = 1}^l {{{\left[ {{\omega _i}} \right]}_{{A_{p( \cdot ),q( \cdot )}}}}}.$$
\end{proof}

	\begin{cor} \label{cor-3.8.2}
Let $\pp, \qq \in \P(\Omega)$ with $\frac{1}{{p( \cdot )}} - \frac{1}{{q( \cdot )}} = \frac{\alpha }{n} \in \left( {0,1} \right)$, and $\ve \in \R^d$ is a unit vector, then $W \in {\bbA_{p( \cdot ),q( \cdot )}}$ implies $\left| {W\ve} \right|, \left\| W \right\| \in A_{p( \cdot ),q( \cdot )}$.
		Moreover,
		$${\left[ {\left\| W \right\|} \right]_{{A_{p( \cdot ),q( \cdot )}}}} \lesssim_d {\left[ W \right]_{\bbA_{p( \cdot ),q( \cdot )}^{}}}.$$
	\end{cor}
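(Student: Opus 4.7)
The plan is to handle the two statements separately: reduce the $|W\ve|$ case to a ratio-type pointwise bound controlled by $\|W^{-1}(y)W(x)\|$, and then deduce the $\|W\|$ case by expressing the operator norm as a finite sum of such vector-image lengths via Lemma \ref{opNorm:equiv} and invoking Lemma \ref{triangleinequality}.

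For $|W\ve|$, the key observation is the pointwise quotient estimate
\[
\frac{|W(x)\ve|}{|W(y)\ve|} \le \|W(x)W^{-1}(y)\| = \|W^{-1}(y)W(x)\|, \qquad x,y \in \Omega,
\]
which follows from $W(x)\ve = (W(x)W^{-1}(y))W(y)\ve$ together with submultiplicativity of the operator norm and the self-adjointness identity $\|AB\|=\|(AB)^{*}\|=\|B^{*}A^{*}\|$. Writing $\omega(x):=|W(x)\ve|$, this gives
\[
\omega(x)\,\omega(y)^{-1}\chi_Q(x)\chi_Q(y) \le \|W^{-1}(y)W(x)\|\chi_Q(x)\chi_Q(y).
\]
Taking the $L^{p'(\cdot)}_y$-norm and then the $L^{q(\cdot)}_x$-norm of both sides, the left-hand side factors as $\|\omega\chi_Q\|_{L^{q(\cdot)}}\cdot\|\omega^{-1}\chi_Q\|_{L^{p'(\cdot)}}$ (since $\|\omega^{-1}\chi_Q\|_{L_y^{p'(\cdot)}}$ is independent of $x$ and can be pulled out of the outer norm), while the right-hand side is precisely the inner expression appearing in the definition of $[W]_{\bbA_{p(\cdot),q(\cdot)}}$. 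Multiplying by $|Q|^{\alpha/n-1}$ and taking the supremum over $Q$ yields $[\,|W\ve|\,]_{A_{p(\cdot),q(\cdot)}} \le [W]_{\bbA_{p(\cdot),q(\cdot)}}$.

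For $\|W\|$, fix an orthonormal basis $\{\ve_i\}_{i=1}^d$ of $\R^d$. By Lemma \ref{opNorm:equiv}, $\|W(x)\|\approx_d \sum_{i=1}^d |W(x)\ve_i|$, and the scalar $A_{p(\cdot),q(\cdot)}$ characteristic is stable (up to constants depending only on $d$) under multiplicative comparability of the weight. By the previous step each $|W\ve_i|$ lies in $A_{p(\cdot),q(\cdot)}$ with characteristic bounded by $[W]_{\bbA_{p(\cdot),q(\cdot)}}$, and Lemma \ref{triangleinequality} says that the sum of $d$ such weights lies in $A_{p(\cdot),q(\cdot)}$ with characteristic bounded by the sum of the individual characteristics. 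Chaining these,
\[
[\|W\|]_{A_{p(\cdot),q(\cdot)}} \lesssim_d \Big[\,\textstyle\sum_{i=1}^d |W\ve_i|\,\Big]_{A_{p(\cdot),q(\cdot)}} \le \sum_{i=1}^d [\,|W\ve_i|\,]_{A_{p(\cdot),q(\cdot)}} \le d\,[W]_{\bbA_{p(\cdot),q(\cdot)}}.
\]

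The argument is essentially bookkeeping once the ratio bound $|W(x)\ve|/|W(y)\ve| \le \|W^{-1}(y)W(x)\|$ is identified as the correct pointwise comparison; the one mildly delicate point is recognizing that it is the \emph{ratio} (not a product) of $|W(x)\ve|$ and $|W(y)\ve|$ that is controlled by the mixed operator norm, and that this ratio is exactly what appears when the two factors of the scalar $A_{p(\cdot),q(\cdot)}$ characteristic are expressed as iterated norms in the independent variables $x$ and $y$. No reducing-operator machinery or extrapolation is needed.
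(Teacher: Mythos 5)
Your proof is correct, and it takes a genuinely different and in fact more elementary route for the $|W\ve|$ step. The paper's proof goes through its two characterization theorems: it invokes Theorem \ref{cha.Apq} to translate $W\in\bbA_{p(\cdot),q(\cdot)}$ into uniform boundedness of the averaging operators $\A_{\alpha,Q}$ on $L^{p(\cdot)}(W,\Omega)\to L^{q(\cdot)}(W,\Omega)$, applies this with $\vf=h\ve$ (so that $|W\A_{\alpha,Q}(h\ve)|=|W\ve|\,|\A_{\alpha,Q}h|$) to get uniform boundedness of $\A_{\alpha,Q}$ on $L^{p(\cdot)}(|W\ve|)\to L^{q(\cdot)}(|W\ve|)$, and then invokes Theorem \ref{Cha.Apq} with $m=1$ to conclude $|W\ve|\in A_{p(\cdot),q(\cdot)}$. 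You instead prove the quotient bound $|W(x)\ve|/|W(y)\ve|\le\|W^{-1}(y)W(x)\|$ directly and read off $[\,|W\ve|\,]_{A_{p(\cdot),q(\cdot)}}\le[W]_{\bbA_{p(\cdot),q(\cdot)}}$ by taking the iterated $L^{p'(\cdot)}_y$- and $L^{q(\cdot)}_x$-norms. This bypasses the averaging-operator machinery entirely, is shorter, and has the small advantage of giving the inequality with constant exactly $1$ (the paper's route via Theorem \ref{Cha.Apq} would a priori introduce a dependence on $p(\cdot)$ in that step, although the final statement absorbs all constants into $\lesssim_d$). Your concluding step for $\|W\|$ --- Lemma \ref{opNorm:equiv} plus Lemma \ref{triangleinequality} --- is identical to the paper's.
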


\begin{proof}%[\textbf{Proof of Corollary \ref{cor-3.8.2}:}]
	According to Theorem \ref{cha.Apq}, the operators \({\A_{\alpha ,Q}}\) are uniformly bounded from \({L^{p( \cdot )}}(W,\Omega )\) to \({L^{q( \cdot )}}(W,\Omega )\) for all cubes \(Q\) within \(\Omega\).

Using \(\vf(y) = h\ve\), we find that \({\A_{\alpha ,Q}}\) are uniformly bounded from \({L^{p( \cdot )}}(\left| {W\ve} \right|, \Omega)\) to \({L^{q( \cdot )}}(\left| {W\ve} \right|, \Omega)\) for all cubes in \(\Omega\), indicating that \(\left| {W\ve} \right|\) is in class \(A_{p( \cdot ),q( \cdot )}\) according to Theorem \ref{Cha.Apq} for \(m=1\).

	By Lemmas \ref{opNorm:equiv} and \ref{triangleinequality}, 
	$${\left[ {\left\| W \right\|} \right]_{{A_{p( \cdot ),q( \cdot )}}}} \approx_d {\left[ {\sum\limits_{i = 1}^d {\left| {W{\ve_i}} \right|} } \right]_{{A_{p( \cdot ),q( \cdot )}}}} \le \sum\limits_{i = 1}^d {{{\left[ {\left| {W{\ve_i}} \right|} \right]}_{{A_{p( \cdot ),q( \cdot )}}}}}  \le d{\left[ W \right]_{{\bbA_{p( \cdot ),q( \cdot )}}}},$$ 
	where $\left\{ {{\ve_1}, \cdots ,{\ve_d}} \right\}$ is any orthonormal basis in $\R^d$ and $\ve=\ve_1$.
\end{proof}
%Similar to the multiple weights ${A_{\vec p( \cdot ),q( \cdot )}}$, a natural idea is that we have characterized the matrix weights ${\bbA_{p( \cdot ),q( \cdot )}}$, then if $W \in \bbA_{p( \cdot ),q( \cdot )}$, whether the fractional Christ-Goldberg maximal operator $\calM_{\alpha,W}$ is bounded from ${L^{p( \cdot )}}(\Omega)$ to ${L^{q( \cdot )}}(\Omega)$? When $p( \cdot ) \equiv p$ and $q( \cdot ) \equiv q$, a clear answer was given in \cite{Moen2023}. This question will hopefully be answered in the near future.

\section{\bf The proof of Theorem \ref{FMVLP}}\label{longproof}

The necessity follows immediately from Theorem \ref{Cha.Apq} since $${\A_{\alpha ,B}}\left( {\left| {{f_1}} \right|, \cdots ,\left| {{f_m}} \right|} \right)(x) \lesssim {\M_\alpha }(\vec f)(x).$$

In the following we prove the sufficiency,
let us start by simplifying the details in four steps.

% Let us simplify the boundedness of $\M$ to make the proof of the next start more quickly. 
% We claim that proving the boundedness of $\M$ is equivalent to the following.
% \begin{equation}\label{modular_vertion_Maxboundeness}
% 	\int_{\rn}{\M_\alpha^d}(f_{1}\sigma_{1},\ldots,f_{m}\sigma_{m})^{q(x)}u(x)\,dx \lesssim 1.
% 	\end{equation}
% 	%
% 	using a constant that does not vary with \(f_l\) for \(l=1,\ldots,m\).

% %When $\vec \omega  \in {A_{\vec p( \cdot ),q( \cdot )}}$, we will prove the sufficiency and first simplify some details with four steps.

% Above fact can be obtained into the following four steps. 

\begin{itemize}
\item[\textbullet] \textbf{Step 1}

For each \( t \in \{0, \tfrac{1}{3}\}^n \), we define
\[
\mathscr{D}_t = \left\{ 2^{-k} \left( (-1)^k t + j + [0,1)^n \right) : k \in \mathbb{Z},\ j \in \mathbb{Z}^n \right\}.
\]
This collection shares the same properties as \( \mathscr{D}_0 \). Using this definition, we define the dyadic multilinear fractional maximal operator by 
	$${\M_{\alpha}^{\D_t}}(\vec f)(x) = \mathop {\sup }\limits_{Q \in {\D_t}} {\left| Q \right|^{\frac{\alpha }{n}}}\left( {\prod\limits_{i = 1}^m {{{\left\langle {{|f_i|}} \right\rangle }_Q}} } \right){\chi _Q}(x),$$
	By \cite[(4.1)]{Sehba2018}, 
	$${\M_\alpha}(\vec f)(x){ \lesssim_{n,m,\alpha}}\sum \limits_{t \in {{\{ 0,1/3\} }^n}} {{\M_{\alpha}^{\D_t}}(\vec f)(x)}.$$
	
	Then, it is sufficient to prove for ${\M_{\alpha}^{\D_t}}$. Further, we only need to prove for ${\M_{\alpha }^d}={\M_{\alpha }^{\D_0}}$, where the difference of ${\D_0}$ and ${\D_t}$ only depends on $n$.
	%Hence, it suffices to prove it with ${\M_\alpha}$ replaced by ${\M_{\alpha ,{\D_t}}}$, and in fact it suffices to prove it for ${\M_{\alpha }^d}={\M_{\alpha }^{\D_0}}$, since the same proof holds for any dyadic grid $\D_t$ with different constants, where the difference only depends on $t$.
\item[\textbullet] \textbf{Step 2}
To proceed, we derive the Fatou's Property, that is Lemma~\ref{lemma:fatou}, to make a approximation for $\M_\alpha^d$.

For any $f_i \in L^{p_i(\cdot)}(\omega_i)$, there exists ${\left\{ {{f_{i,k}}} \right\}_k} \subseteq L_c^\infty$, which increase pointwise to \(f_i\), and are such that
	$$\mathop {\lim }\limits_{k \to \infty } {\rm{ }}{\M_\alpha^d}({f_{1,k}}, \cdots ,{f_{m,k}})(x) = {\M_\alpha^d}(\vec f)(x).$$
The proof of the above is similar to \cite[Lemma~3.30]{Cruz2013}. Then it suffices to prove that for $\vec{f} \subseteq (L_c^\infty)^m$.
\item[\textbullet] \textbf{Step 3}
For \(\pp \in \P_0\) and a given weight \(v\), we define \(\Lp_v\) as the quasi-Banach function space with the norm
$${\left\| f \right\|_{L_v^{p( \cdot )}}}: = {\left\| {{v^{\frac{1}{{p( \cdot )}}}}f} \right\|_{L_{}^{p( \cdot )}}}.$$ The norm has many of the same basic properties as the $\Lp$ norm.

Let $u(\cdot)=\omega(\cdot)^{q(\cdot)}$ and
$\sigma_l(\cdot)=\omega_l(\cdot)^{-p_l'(\cdot)}$, $l=1,\dots,m$, then
\[   (\omega_{l}(x)\sigma_{l}(x))^{p_{l}(x)}
=(\omega_{l}(x)^{p_{l}^{\prime}(x)-1})^{-p_{l}(x)}
= \omega_{l}(x)^{-p_{l}^{\prime}(x)}
= \sigma_{l}(x).
\]
Therefore,
$${\left\| {{\M_\alpha^d}({f_1}{\sigma _1}, \ldots ,{f_m}{\sigma _m})} \right\|_{L_u^{q( \cdot )}}} = {\left\| {{\M_\alpha^d}({f_1}{\sigma _1}, \ldots ,{f_m}{\sigma _m})\omega } \right\|_{L_{}^{q( \cdot )}}},$$
and for $l=1,\dots,m$,
\begin{align*}
\|f_{l}\|_{L_{\sigma_{l}}^{p_{l}(\cdot)}}
= \|f_{l}\sigma_{l}\omega_{l}\|_{L^{p_{l}(\cdot)}}.  
\end{align*}     
Thereby, it will suffice to prove that 
\begin{align}\label{EQ-boundenessofM2}
\|{\M_\alpha^d}(f_{1}\sigma_{1},\ldots,f_{m}\sigma_{m})\|_{\Lq_{u}}
\lesssim \prod\limits_{i = 1}^m {{{\left\| {{f_i}} \right\|}_{L_{{\sigma _i}}^{{p_i}( \cdot )}}}},
\end{align}
since we can replace $f_{l}$ by $f_{l}/\sigma_{l}$, $l=1,\ldots,m$. 
\item[\textbullet] \textbf{Step 4}
Taking for granted that \(\|f_l\|_{L^{p_l(\cdot)}_{\sigma_{l}}}=1\), for each \(l=1, \ldots, m\), Lemma \ref{lemma:mod-norm} implies
	$$\int_{\rn} | {f_l}(x){|^{{p_l}(x)}}{\sigma _l}(x){\mkern 1mu} dx \le 1.$$

Therefore, it suffices to show that
\begin{align}\label{Xiu_1}
\| \M_\alpha^d(f_{1}\sigma_{1}, \dots, f_{m}\sigma_{m}) \|_{L_u^{q(\cdot)}} \lesssim 1,
\end{align}
which, by Lemma \ref{lemma:mod-norm}, is equivalent to proving
\begin{equation}
\int_{\mathbb{R}^n} \M_\alpha^d(f_{1}\sigma_{1}, \ldots, f_{m}\sigma_{m})^{q(x)} u(x)\, dx \lesssim 1,
\end{equation}
where the implicit constant is independent of \( f_l \) for \( l = 1, \ldots, m \).

\end{itemize}

% Then we derive our desired \eqref{modular_vertion_Maxboundeness}.

We now proceed to prove sufficiency. Without loss of generality, it is enough to consider the case where \( m = 2 \).
% We now begin our main estimate, which is to prove
% that~\eqref{modular_vertion_Maxboundeness} holds.
Define the functions
\begin{align*}
h_{1}=f_{1}\chi_{\lbrace f_{1}>1\rbrace}, \;
 h_{2}=f_{1}\chi_{\lbrace f_{1}\leq 1 \rbrace}, 
h_{3}=f_{2}\chi_{\lbrace f_{2}>1\rbrace},\;
 h_{4}=f_{2}\chi_{\lbrace f_{2} \leq 1\rbrace},
\end{align*}
and for brevity define
\[     \rho(1)= 1, \quad \rho(2)= 1, \quad
\rho(3)= 2, \quad \rho(4)= 2.  \]
Then,
\begin{align*}
&\int_\subRn \M_\alpha^{d}\left( f_{1}\sigma_{1},
f_{2}\sigma_{2}\right)(x)^{q(x)}u(x)\,dx\\
\le&
\int_\subRn \M_\alpha^{d}\left( h_{1}\sigma_{1}, h_{3}\sigma_{2}\right)(x)
^{q(x)}u(x)\,dx
+\int_\subRn \M_\alpha^{d}\left( h_{1}\sigma_{1}, h_{4}\sigma_{2}\right)(x)
^{q(x)}u(x)\,dx\\
&+ 
\int_\subRn \M_\alpha^{d}\left( h_{2}\sigma_{1},
h_{3}\sigma_{2}\right)(x)^{q(x)}u(x)\,dx 
+ \int_\subRn \M_\alpha^{d}\left( h_{2}\sigma_{1}, h_{4}\sigma_{2}\right)(x)
^{q(x)}u(x)\,dx\\
=:& I_{1}+I_{2}+I_{3}+I_{4}.
\end{align*}

We give the following multilinear fractional-type Calder\'on-Zygmund decomposition.

Fix $a>2^{mn-\alpha}$ and for each $k \in \Z$, we define 
$${\Omega _k} = \left\{ {x \in \rn:{\M_{\alpha}^d}({f_1}{\sigma _1}, \cdots ,{f_m}{\sigma _m})(x) > {a^k}} \right\}.$$
Then, \({\Omega_k} = \bigcup_{j \in \mathbb{Z}} {Q_j^k}\), where \(\left\{Q_j^k\right\}_j\) constitutes a family of non-overlapping maximal dyadic cubes, characterized by the property that
$${a^k} < {\left| {Q_j^k} \right|^{\frac{\alpha }{n}}}\prod\limits_{i = 1}^m {{{\left\langle {{f_i}{\sigma _i}} \right\rangle }_{Q_j^k}}} \le {2^{mn - \alpha }}{a^k}< {a^{k + 1}}.$$
The similar argument can be refered to \cite[p.228]{Moen}, \cite[p.1245]{Lerner}, \cite[Proposition 7.11]{H2014}, \cite[Theorem 5.3.1]{Gra1}, and \cite{Sawyer1982}.

Furthermore, the sets \(E_j^k = \Q \setminus \Omega_{k+1}\) are each disjoint for any $k,j$. Additionally, $\left|Q_j^k\right| \approx \left|E_j^k\right|$, for any $x \in X$, it
is follow obviously that
$$
\mathscr{M}_\alpha^{d}\left(f_1 \sigma_1, \cdots, f_m \sigma_m\right)(x) \lesssim \sum_{k, j} \left|Q_j^k\right|^{\frac{\alpha}{n}} \prod_{i_1}^m \langle f_i \sigma_i\rangle_{Q_j^k} \chi_{E_j^k}(x) .
$$

According to Lemma~\ref{cor:Ainfty} and Lemma~\ref{lemma:Ainfty-prop}, the measures \(u ,\, \sigma_l \in A_\infty\), for \(l=1, 2\). Furthermore, there exists a constant \(\beta > 1\) such that 
\[  u(\Q) \le \beta u(E_j^k) \quad and \quad  \sigma_l(\Q) \le \beta \sigma_l(E_j^k). \]
Consequencely, we obtain $u(\Q) \approx u(E_j^k)$ and $\sigma_l(\Q) \approx \sigma_l(E_j^k)$.

\textbf{Estimate for $I_1$:}
\begin{align*}
I_1 
&\lesssim \sum_{k,j}\int_{E_{j}^{k}}\prod_{l=1,3}
\bigg(\int_{\Q}{h_{l}\sigma_{\rho(l)}\,dy}\bigg)^{q(x)}
|\Q|^{(\frac{\alpha }{n}-2)q(x)}u(x)\,dx. \\
\end{align*}   
Since $h_1 \geq 1$ or $h_1 = 0$, by Lemma \ref{Xiu_1},
\begin{equation}\label{Bound_function_geq1}
\int_{\Q}h_1(y)\sigma_{1}(y)\,dy
\leq \int_\subRn f_1(y)^{p_1(y)}\sigma_1(y)\,dy \leq 1. 
\end{equation}
The above estimate is also holds for $h_3$. 

By \eqref{delta}, we see that for almost all $x\in \Q$, such that $\delta(\Q)\leq q_{-}(\Q)\leq q(x)$, which means that,
\begin{align*}
I_{1} 
&\leq  \sum_{k,j}\int_{E_{j}^{k}}
\prod_{l=1,3}\bigg(\int_{\Q}h_{l}(y)\sigma_{\rho(l)}(y)\,dy\bigg)
^{\delta(\Q)}|\Q|^{(\frac{\alpha }{n}-2)q(x)}u(x)\,dx\\
& \leq \sum_{k,j}\int_{E_{j}^{k}}
\prod_{l=1,3}\bigg(\frac{1}{\sigma_{\rho(l)}(\Q)}
\int_{\Q}h_{l}(y)^{\frac{p_{l}(y)}{(p_{l})_{-}(\Q)}}\sigma_{\rho(l)}(y)\,dy
\bigg)^{\delta(\Q)} \\
& \qquad \qquad \times 
\sigma_{\rho(l)}(\Q)^{\delta(\Q)}|\Q|^{(\frac{\alpha }{n}-2)q(x)}u(x)\,dx.
\end{align*}

Firstly, for the second term of the above estimation, we assume that 
\begin{equation}\label{EQ-I1estimte}
\int_{E_{j}^{k}}\prod_{l=1,3}
|\Q|^{(\frac{\alpha }{n}-2)q(x)}\sigma_{\rho(l)}(\Q)^{\delta(\Q)}u(x)\,dx
\lesssim \left(\sigma_{1}(\Q)^{\frac{1}{(p_{1})_{-}(\Q)}}
\sigma_{2}(\Q)^{\frac{1}{(p_{2})_{-}(\Q)}}\right)^{\delta(\Q)}.
\end{equation}

Secondly, by applying Hölder's inequality using the measure \(\sigma_{\rho(l)} \, dx\) for \(l=1, 3\), we demonstrate that
\begin{align}
&\bigg(({\sigma_{\rho(l)}(\Q)})^{-1}
\int_{\Q}h_{l}(y)^{\frac{p_{l}(y)}{(p_{l})_{-}(\Q)}}
\sigma_{\rho(l)}(y)\,dy\bigg)^{\delta(\Q)} \notag\\
\le& \bigg(({\sigma_{\rho(l)}(\Q)})^{-1}
\int_{\Q}h_{l}(y)^{\frac{p_{l}(y)}{(p_{l})_{-}}}\sigma_{\rho(l)}(y)\,dy
\bigg)^{(p_{l})_{-}\frac{\delta(\Q)}{(p_{l})_{-}(\Q)}}
= \langle h_{l}^{\frac{p_{l}(\cdot)}{(p_{l})_{-}}}
\rangle_{\sigma_{\rho(l)},\Q}^{(p_{l})_{-}\frac{\delta(\Q)}{(p_{l})_{-}(\Q)}}.\label{EQ-estimateHölder}
\end{align}
Since 
\begin{equation*}
1=\frac{\eta(\Q)}{(p_{1})_{-}(\Q)}+\frac{\eta(\Q)}{(p_{2})_{-}(\Q)},
\end{equation*}
then by \eqref{EQ-estimateHölder} and Young's inequality, we have
\begin{align}
I_1 
\nonumber &\lesssim \sum_{k,j}\left(\prod_{l=1,3}
\langle h_{l}^{\frac{p_{l}(\cdot)}{(p_{l})_{-}}}
\rangle_{\sigma_{\rho(l)},Q}^{(p_{l})_{-}\frac{\eta(\Q)}{(p_{l})_{-}(\Q)}}
\sigma_{\rho(l)}(\Q)^{\frac{\eta(\Q)}{(p_{l})_{-}(\Q)}}\right)^{\frac{{\delta (\Q)}}{{\eta (\Q)}}}\\
\label{eqn:final-I1-est}     &\lesssim \sum_{k,j}\left(\sum_{l=1,3}
\langle h_{l}^{\frac{p_{l}(\cdot)}{(p_{l})_{-}}}
\rangle_{\sigma_{\rho(l)},\Q}^{(p_{l})_{-}}\sigma_{\rho(l)}( Q_{j}^{k})\right)^{\frac{{\delta (\Q)}}{{\eta (\Q)}}}\\
\nonumber  &\lesssim  \sum_{\theta=1,c}\left(\sum_{k,j}\sum_{l=1,3}
\langle h_{l}^{\frac{p_{l}(\cdot)}{(p_{l})_{-}}}
\rangle_{\sigma_{\rho(l)},\Q}^{(p_{l})_{-}}\sigma_{\rho(l)}(
E_{j}^{k})\right)^\theta.\\
\intertext{ By 
Lemma~\ref{lemma:wtd-max-bound}, since $(p_l)_->1$,}
\nonumber     &\leq \sum_{\theta=1,c}\left(\sum_{l=1,3} \int_{\subRn}
M_{\sigma_{\rho(l)}}^{d}(h_{l}^{\frac{p_{l}(\cdot)}{(p_{l})_{-}}})(x)
^{(p_{l})_{-}}\sigma_{\rho(l)}(x)\,dx\right)^\theta\\
\nonumber     & \lesssim \sum_{\theta=1,c}\left(\sum_{l=1,3} \int_{\subRn}
h_{l}(x)^{p_{l}(x)}\sigma_{\rho(l)}(x)\,dx\right)^\theta\\
\nonumber     & \lesssim 1,
\end{align}
where $c={c_{n,\alpha,p_1(\cdot),p_2(\cdot)}}\ge1$, and it doesn't depend on $\Q$.

It remains to justify \eqref{EQ-I1estimte}. To begin, we simplify \eqref{EQ-I1estimte} by rearranging terms.
\begin{align*}
& \int_{E_{j}^{k}}\prod_{l=1,3}|\Q|^{({\frac{\alpha }{n} - 2})q(x)}\sigma_{\rho(l)}(\Q)^{\delta(\Q)}u(x)\,dx \\
&\leq  \prod_{l=1,3}
\bigg(\frac{\sigma_{\rho(l)}(\Q)}
{\|\omega_{\rho(l)}^{-1}\chi_\Q\|_{p_{\rho(l)}^{\prime}(\cdot)}}
\bigg)^{\delta(\Q)}\\
&\times
\int_{\Q}\bigg(\prod_{l=1,3}
\|\omega_{\rho(l)}^{-1}\chi_\Q\|_{p_{\rho(l)}^{\prime}(\cdot)}^{\delta(\Q)-q(x)}\bigg)
\bigg(\prod_{l=1,3}
|\Q|^{({\frac{\alpha }{n} - 2})q(x)}\|\omega_{\rho(l)}^{-1}\chi_{\Q}\|_{p_{\rho(l)}^{\prime}(\cdot)}^{q(x)}
u(x)\bigg)\,dx.
\end{align*}      
By the $A_{\vec{p}(\cdot),q(\cdot)}$ condition,
\begin{equation*}
\big\||\Q|^{(\frac{\alpha }{n} - 2)}\prod_{l=1 }^{2}
\|\omega_{l}^{-1}\chi_{\Q}\|_{p_{l}^{\prime}(\cdot)}\omega\chi_{\Q}
\big\|_{q(\cdot)}
\lesssim 1,
\end{equation*}
which together with Lemma~\ref{lemma:mod-norm} gives
\begin{equation} \label{eqn:modular-Ap}
\int_{\Q}\prod_{l=1 }^{2}
\|\omega_{l}^{-1}\chi_{\Q}\|_{p_{l}^{\prime}(\cdot)}^{q(x)}
|\Q|^{(\frac{\alpha }{n} - 2)q(x)}u(x)\,dx \lesssim 1.
\end{equation}

To prove \(\eqref{EQ-I1estimte}\), it suffices to verify that for \(l = 1, 2\),
\begin{equation}\label{EQ-I11estimate}
\bigg(\frac{\sigma_{l}(\Q)}
{\|\omega_{l}^{-1}\chi_{\Q}\|_{p_{l}^{\prime}(\cdot)}}\bigg)^{\delta(\Q)}
\lesssim  \sigma_{l}(\Q)^{\frac{\delta(\Q)}{(p_{l})_{-}(\Q)}}
\end{equation}        
and
\begin{equation}\label{EQ-I12estimate}
\|\omega_{l}^{-1}\chi_{\Q}\|_{p_{l}^{\prime}(\cdot)}^{\delta(\Q)-q(x)}
\lesssim 1.
\end{equation}

Let us prove~\eqref{EQ-I11estimate} as follows. Suppose that
$\|\omega_{l}^{-1}\chi_{\Q}\|_{p_{l}^{\prime}(\cdot)} >1$. We see that Lemma~\ref{lemma:mod-norm} and $(p_l^{\prime})_{\pm}(\Q)=(p_{l})_{\mp}(\Q)^{\prime}$, which implies that
\begin{equation*}
\bigg(\frac{\sigma_{l}(\Q)}
{\| \omega_{l}^{-1}\chi_\Q\|_{p_{l}^{\prime}(\cdot)}}\bigg)^{\delta(\Q)}
\leq \bigg( \sigma_{l}(\Q)
^{\frac{(p_{l})_{-}(\Q)^{\prime}-1}{(p_{l})_{-}(\Q)^{\prime}}} 
\bigg)^{\delta(\Q)}
=\sigma_{l}(\Q)^{\frac{\delta(\Q)}{(p_{l})_{-}(\Q)}}.
\end{equation*}
On the other hand, if $\|\omega_{l}^{-1}\chi_{\Q}\|_{p_{l}^{\prime}(\cdot)}
\leq 1$, 
\begin{align*}
\frac{\sigma_l(\Q)}{\|\omega_l^{-1}\chi_\Q\|_{p_l'(\cdot)}}
\leq  \sigma_l(\Q)^{\frac{(p_l)_+(\Q)^{'}-1}{(p_l)_+(\Q)^{'}}}  
= &\sigma_l(\Q)^{\frac{1}{(p_l)_+(\Q)}}  \\
= &\sigma_l(\Q)^{\frac{1}{(p_l)_-(\Q)}}
\sigma_l(\Q)^{\frac{1}{(p_l)_+(\Q)}-\frac{1}{(p_l)_-(\Q)}}. 
\end{align*}
Again by Lemma~\ref{lemma:mod-norm} and Lemma~\ref{lemma:lemma3.3},
\begin{align*}
\sigma_l(\Q)^{\frac{1}{(p_l)_+(\Q)}-\frac{1}{(p_l)_-(\Q)}}
&\leq  \|\omega_l^{-\frac{1}{2}}\chi_\Q\|_{2p_l'(\cdot)}
^{[2(p_l')_{-}]\big(\frac{1}{(p_l)_+(\Q)}-\frac{1}{(p_l)_-(\Q)}\big)}\\
&=\|\omega_l^{-\frac{1}{2}}\chi_\Q\|_{2p_l'(\cdot)}
^{[2(p_l')_{-}]\big(1-\frac{1}{(p_l)_+(\Q)^{'}}-1+\frac{1}{(p_l)_-(\Q)^{'}}\big)} \\
&= \|\omega_l^{-\frac{1}{2}}\chi_\Q\|_{2p_l'(\cdot)}
^{[2(p_l')_{-}]\big(\frac{1}{(p_l^{\prime})_+(\Q)}-\frac{1}{(p_l^{\prime})_-(\Q)}\big)} \\
& \le \|\omega_l^{-\frac{1}{2}}\chi_\Q\|_{2p_l'(\cdot)}
^{c[(2p_l^{\prime})_-(\Q)-(2p_l^{\prime})_+(\Q)]}\\
& \lesssim 1.
\end{align*}
Hence, \eqref{EQ-I11estimate} is valid.

Last, we prove \(\eqref{EQ-I12estimate}\) by assuming that
$
\|\omega_{l}^{-1}\chi_{\Q}\|_{p_{l}^{\prime}(\cdot)}
\le 1,
$
otherwise, there is nothing to prove. By Lemma \ref{q-relation} and Lemma \ref{vweight4}, 
$$\left\| {{\omega _l}^{ - 1}{\chi _{Q_j^k}}} \right\|_{{{p_l}^\prime ( \cdot )}}^{\delta (Q) - q(x)} = \left\| {{\omega _l}^{ - \frac{1}{m}}{\chi _{Q_j^k}}} \right\|_{{m{p_l}^\prime ( \cdot )}}^{m(\delta (Q) - q(x))} \lesssim 1.$$
Then \eqref{EQ-I12estimate} is obvious.

This completes the estimate of $I_1$.  

\textbf{Estimate for $I_2$:}
To estimate the term \( I_2 \), we partition the set of cubes \( \Q \) into three disjoint sets: small cubes near the origin, large cubes near the origin, and cubes of any size far from the origin. Specifically, let \( \{ P_i \}_{i=1}^{2^n} \) be the \( 2^n \) dyadic cubes adjacent to the origin with \( |P_i| \geq 1 \). These cubes are sufficiently large so that for any dyadic cube \( Q \) equal to or adjacent to \( P_i \) in the same quadrant and with \( |Q| = |P_i| \), we have \( u(Q) \geq 1 \) and \( \sigma_l(Q) \geq 1 \) for \( l = 1, 2 \). The existence of such cubes follows from Lemma~\ref{lemma:Ainfty-prop} and Lemma~\ref{cor:Ainfty}. Let \( P = \bigcup_i P_i \). With this setup, we can partition the cubes \( \Q \) into three disjoint sets:

\begin{align*}
\mathscr{F} & =\lbrace (k,j): \Q \subseteq P_i \text{ for some } i\rbrace,\\
\mathscr{G}& =\lbrace (k,j): P_i \subseteq \Q \text{ for some } i \rbrace,\\
\mathscr{H} & =\lbrace (k,j): \Q \cap P_i = \emptyset \text{ for all } i
\rbrace. 
\end{align*}
Much as we did above for $I_1$, we conclude that
\begin{align*}
I_2
\lesssim&
\sum_{k,j}\int_{E_{j}^{k}}\prod_{l=1,4}
\langle h_{l}\sigma_{\rho(l)}\rangle_{\Q}^{q(x)}{\left| {Q_j^k} \right|^{\frac{\alpha }{n}\cdot q(x)}}u(x)\,dx =\sum_{(k,j)\in\mathscr{F}}
+\sum_{(k,j)\in\mathscr{G}}
+\sum_{(k,j)\in\mathscr{H}}
:= J_{1}+J_{2}+J_{3}.
\end{align*}
Let us proceed to estimate each \( J_i \) individually.
%

%\begin{rem}
%Throughout the rest of this proof, the implicit constants may rely on \(\sigma_l(P)\) or \(u(P)\).The selection of \(P_i\) distinctly illustrates the proof's dependence on the dyadic grid \(\mathscr{D}_0\). For alternative grids such as \(\mathscr{D}_t\), we substitute the origin with its translated version \(\pm t\), which is determined by the scaling applied to \(P_i\).  This shows how the grid origin adjusts based on translation and scaling considerations. To understand how the dyadic grid and the selection of the \(P_i\) affect the proof, please consult Remark~\ref{remark:grids} below.
%\end{rem}
{\bf{Estimate for $J_{1}$:}}
We then invoke $h_{4}\leq 1$ and $q_+<\infty$ to deduce that
\begin{align*}
J_{1}
& = \sum_{(k,j)\in \mathscr{F}}\int_{E_{j}^{k}}{\left| {Q_j^k} \right|^{\frac{\alpha }{n}\cdot q(x)}}
\prod_{l=1,4}\langle h_{l}\sigma_{\rho(l)}\rangle_{\Q}^{q(x)}u(x)\,dx
\\
& \leq \sum_{(k,j)\in\mathscr{F}}\int_{E_{j}^{k}}{\left| {Q_j^k} \right|^{\frac{\alpha }{n}\cdot q(x)}}
\langle h_{1}\sigma_{1}\rangle_{\Q}^{q(x)}\langle
\sigma_{2}\rangle_{\Q}^{q(x)}u(x)\,dx \\
& = \sum_{(k,j)\in\mathscr{F}}\int_{E_{j}^{k}}
\bigg(\int_{\Q} h_1\sigma_1\,dy\bigg)^{q(x)}
\sigma_{2}(\Q)^{\delta(\Q)}\sigma_{2}(\Q)^{q(x)-\delta(\Q)}|\Q|^{(\frac{\alpha }{n} - 2)q(x)}u(x)\,dx \\
\intertext{by inequalities~\eqref{Bound_function_geq1} and~\eqref{EQ-estimateHölder},}
& \leq \sum_{(k,j)\in\mathscr{F}}\int_{E_{j}^{k}}
\bigg(\int_{\Q} h_1\sigma_1\,dy\bigg)^{\delta(\Q)}
\sigma_{2}(\Q)^{\delta(\Q)}\sigma_{2}(\Q)^{q(x)-\delta(\Q)}|\Q|^{(\frac{\alpha }{n} - 2)q(x)}u(x)\,dx \\
& = \sum_{(k,j)\in\mathscr{F}}\int_{E_{j}^{k}}
\langle h_{1}\rangle_{\sigma_{1},\Q}^{\delta(\Q)}
\sigma_{2}(\Q)^{q(x)-\delta(\Q)}\sigma_{1}(\Q)^{\delta(\Q)}\sigma_{2}(\Q)^{\delta(\Q)}
|\Q|^{(\frac{\alpha }{n} - 2)q(x)}u(x)\,dx \\
&\leq
\sum_{(k,j)\in\mathscr{F}}\big(\sigma_{2}(\Q)+1\big)^{q_{+}(\Q)-\delta(\Q)}
\langle h_{1}^{\frac{p_{1}(\cdot)}{(p_{1})_{-}}}\rangle_{\sigma_{1},\Q}
^{(p_{1})_{-}\frac{\delta(\Q)}{(p_{1})_{-}(\Q)}}\\
&   \qquad \qquad\times
\int_{E_{j}^{k}}|\Q|^{(\frac{\alpha }{n} - 2)q(x)}\sigma_{1}(\Q)^{\delta(\Q)}\sigma_{2}(\Q)^{\delta(\Q)}u(x)\,dx.
\\
\intertext{Setting $\delta_- = \mathop {\inf }\limits_Q \delta(Q)$,}
&\overset{\text{(\ref{EQ-I1estimte})}}{\lesssim}
\big(\sigma_{2}(P)+1\big)^{q_{+}-\delta_-}\sum_{(k,j)\in\mathscr{F}}
\langle h_{1}^{\frac{p_{1}(\cdot)}{(p_{1})_{-}}}
\rangle_{\sigma_{1},\Q}^{(p_{1})_{-}\frac{\delta(\Q)}{(p_{1})_{-}(\Q)}} 
\sigma_{1}(\Q)^{\frac{\delta(\Q)}{(p_{1})_{-}(\Q)}}\sigma_{2}(\Q)^{\frac{\delta(\Q)}{(p_{2})_{-}(\Q)}}.\\
\intertext{Using Lemma~\ref{lemma:wtd-max-bound} and applying Young's inequality, we obtain}
&\leq \big(\sigma_{2}(P)+1\big)^{q_{+}-\delta_{-}}\sum_{(k,j)\in
	\mathscr{F}}\left(
\langle h_{1}^{\frac{p_{1}(\cdot)}{(p_{1})_{-}}}\rangle_{\sigma_{1},\Q}^{(p_{1})_{-}}
\sigma_{1}(\Q)+\sigma_{2}(\Q)\right)^{\frac{{\delta (\Q)}}{{\eta (\Q)}}}  \\
&\lesssim    \big(\sigma_{2}(P)+1\big)^{q_{+}-\delta_{-}}\sum_{\theta=1,c}\left(\sum_{(k,j)\in\mathscr{F}}
\left(\langle
h_{1}^{\frac{p_{1}(\cdot)}{(p_{1})_{-}}}\rangle_{\sigma_{1},\Q}^{(p_{1})_{-}}
\sigma_{1}(E_{j}^{k})+\sigma_{2}(E_{j}^{k})\right)\right)^\theta\\
& \lesssim \sum_{\theta=1,c}\left(\sum_{(k,j)\in \mathscr{F}}\int_{E_{j}^{k}}
{M}_{\sigma_{1}}^{d}\big(f_{1}^{\frac{p_{1}(\cdot)}{(p_{1})_{-}}}\big)(x)
^{(p_{1})_{-}}\sigma_{1}(x)\,dx
+ \sum_{(k,j)\in \mathscr{F}} \sigma_{2}(E_{j}^{k})\right)^\theta\\
& \lesssim \sum_{\theta=1,c}\left(\int_{\subRn} f_{1}(x)^{p_{1}(x)}\sigma_{1}(x)\,dx
+ \sigma_{2}(P)\right)^\theta\\
& \lesssim 1,
\end{align*}
where $c={c_{n,\alpha,p_1(\cdot),p_2(\cdot)}}\ge1$, and it doesn't depend on $\Q$.

\textbf{Estimate for $J_{2}$:} 
%Given $(k,j) \in \mathscr{G}$, since $P_i\subseteq \Q$, we have that
%$1\leq \sigma_2(P_i)\leq \sigma_2(\Q)$.  Therefore, by
%Lemma~\ref{lemma:p-infty-cond} applied twice to $\omega_2^{-\frac{1}{2}}\in
%\A_{2p_2'(\cdot)}$, we get
%
%\begin{multline*}
%\frac{1}{|\Q|} 
%\leq \frac{|P_{i}|}{|\Q|}
%\lesssim \bigg(\frac{\sigma_{2}(P_{i})}
%{\sigma_{2}(\Q)}\bigg)^{\frac{1}{2(p_{2}^{\prime})_{\infty}}}
%\lesssim \sigma_{2}(\Q)^{-\frac{1}{2(p_{2}^{\prime})_{\infty}}}
%\approx \|\omega_{2}^{-\frac{1}{2}}\chi_{\Q}\|_{2\cpbp}^{-1}
%= \|\omega_{2}^{-1}\chi_{\Q}\|_{\cpbp}^{-\frac{1}{2}}.
%\end{multline*}
%   

First, we establish the following two facts.
\begin{align}\label{cd1w}
{|\Q|^{\frac{\alpha}{n}  - 2}}\int_{\Q}{h_1\sigma_{1}\,d x}\int_{\Q}{h_{4}\sigma_{2}\,d x} \le {c_0}.
%{\left[ {\vec \omega } \right]_{{A_{\vec p( \cdot ),q( \cdot )}}}}
\end{align}
and
\begin{equation}\label{P_infty_Bounded}
\sigma_{1}(\Q)^{q_{\infty}}\sigma_{2}(\Q)^{q_{\infty}}
|\Q|^{( {\frac{\alpha }{n} - 2} )q_{\infty}}u(E_{j}^{k})
\lesssim \sigma_{1}(\Q)^{\frac{q_{\infty}}{(p_{1})_{\infty}}}
\sigma_{2}(\Q)^{\frac{q_{\infty}}{(p_{2})_{\infty}}}.
\end{equation}

Now let us estimate to $J_2$.
 Using Lemmas~\ref{lemma:p-infty-px} and~\ref{lemma:infty-bound} with \eqref{cd1w} and \eqref{Bound_function_geq1}, there
exists $t>1$ such that
\begin{align*}
J_2 
& \approx
\sum_{(k,j)\in \mathscr{G}}\int_{E_{j}^{k}}
 \bigg({c_0}^{-1}\cdot{|\Q|^{\frac{\alpha}{n} - 2}}\int_{\Q}{h_{1}\sigma_{1}d x}
\int_{\Q}h_{4}\sigma_{2}d x\bigg)^{q(x)}u(x)\,d x\\
& \lesssim \sum_{(k,j)\in\mathscr{G}} \int_{E_{j}^{k}}
 \bigg({c_0}^{-1}\cdot{|\Q|^{\frac{\alpha}{n} - 2}}\int_{\Q}{h_{1}\sigma_{1}d x}
\int_{\Q}h_{4}\sigma_{2}d x\bigg)^{q_\infty}u(x)\,d x\\
& \qquad \qquad \qquad 
+ \sum_{(k,j)\in \mathscr{G}}\int_{E_{j}^{k}}
\frac{u(x)}{(e+\left| x \right|)^{ntq_{-}}}\,dx \\
&\lesssim \sum_{(k,j)\in \mathscr{G}}
\prod_{l=1,4}\langle h_{l}\rangle_{\sigma_{\rho(l),\Q}}^{q_{\infty}}
\sigma_{1}(\Q)^{q_{\infty}}
\sigma_{2}(\Q)^{q_{\infty}}\mu(\Q)^{\left( {\eta - 2} \right)q_{\infty}}u(E_{j}^{k})+1\\
& \lesssim \left(\sum_{(k,j)\in \mathscr{G}}
\prod_{l=1,4}\langle h_{l}\rangle_{\sigma_{\rho(l)},\Q}^{q_{\infty}}
\sigma_{\rho(l)}(\Q)^{\frac{q_{\infty}}{(p_{\rho(l)})_{\infty}}} + 1\right).
\end{align*}
The last inequality is obtained by \eqref{P_infty_Bounded}.

Secondly, by Lemma~\ref{Gu6} and again by
Lemma~\ref{lemma:p-infty-cond}, we conclude that
\begin{multline} \label{sigma1_Averange_Bound}
\frac{1}{\sigma_{1}(\Q)}\int_{\Q}{h_{1}(y)\sigma_{1}(y)\,d \mu}
\lesssim 
\sigma_{1}(\Q)^{-1}\|h_{1}\|_{L_{\sigma_{1}}^{p_{1}(\cdot)}}
\|\chi_{\Q}\|_{L_{\sigma_{1}}^{p_{1}^{\prime}(\cdot)}} \\
\leq
\sigma_{1}(\Q)^{-1}\|f\|_{L_{\sigma_{1}}^{p_{1}(\cdot)}} 
\|\omega_{1}^{-1}\chi_{\Q}\|_{\cpap}
\lesssim  \sigma_{1}(\Q)^{\frac{1}{(p_{1}^{\prime})_{\infty}}-1}
\leq \sigma_{1}(Q_0)^{-\frac{1}{(p_{1})_{\infty}}}
\lesssim 1 .
\end{multline}

Since
$\frac{1}{p_{\infty}}=\frac{1}{(p_{1})_{\infty}}+\frac{1}{(p_{2})_{\infty}},$
we derive ~\eqref{P_infty_Bounded} and Young's inequality to arrive at
\begin{align}
\nonumber  J_{2}
& \lesssim \sum_{(k,j)\in \mathscr{G}}
\prod_{l=1,4}\langle h_{l}\rangle_{\sigma_{\rho(l)},\Q}^{q_{\infty}}
\sigma_{\rho(l)}(\Q)^{\frac{q_{\infty}}{(p_{\rho(l)})_{\infty}}} + 1\\
\label{eqn:J2-final-est} & \lesssim \sum_{(k,j)\in \mathscr{G}}
\left(\langle h_{1}\rangle_{\sigma_{1},\Q}^{(p_{1})_{\infty}}
\sigma_{1}(\Q)\right)^{\frac{{{q_\infty }}}{{{p_\infty }}}}
+ \sum_{(k,j)\in \mathscr{G}}
\left(\langle h_{4}\rangle_{\sigma_{2},\Q}^{(p_{2})_{\infty}}\sigma_{2}(\Q)\right)^{\frac{{{q_\infty }}}{{{p_\infty }}}}
+1\\
\label{eqn:J2-final-est-2} & \lesssim \left(\sum_{(k,j)\in \mathscr{G}}
\langle c_1^{-1}h_{1}\rangle_{\sigma_{1},\Q}^{(p_{1})_{\infty}}
\sigma_{1}(E_{j}^{k})
+ \sum_{(k,j)\in \mathscr{G}}
\langle h_{4}\rangle_{\sigma_{2},\Q}^{(p_{2})_{\infty}}
\sigma_{2}(E_{j}^{k}) +1\right)^{\frac{{{q_\infty }}}{{{p_\infty }}}}, \\
\intertext{Invoking Lemmas~\ref{lemma:p-infty-px}
and~\ref{lemma:infty-bound}, there exists $t>1$ such that the items in parentheses above are controlled by}
\nonumber  & \quad \sum_{(k,j)\in \mathscr{G}}\int_{E_{j}^{k}}
\langle c_1^{-1}h_{1}\rangle_{\sigma_{1},\Q}^{p_{1}(x)}
\sigma_{1}(x)\,dx
+ \sum_{(k,j)\in \mathscr{G}}\int_{E_{j}^{k}}
\frac{\sigma_{1}(x)}{(e+|x|)^{tn(p_{1})_{-}}}\,dx\\ 
\nonumber    & \qquad \qquad \qquad 
+\sum_{(k,j)\in \mathscr{G}}
\int_{E_{j}^{k}}{M}_{\sigma_{2}}^{d}h_{4}(x)^{(p_{2})_{\infty}}
\sigma_{2}(x)\,dx +1\\
\nonumber  & \lesssim \sum_{(k,j)\in \mathscr{G}}\int_{E_{j}^{k}}
\langle
c_1^{-1}h_{1}^{\frac{p_{1}(\cdot)}{(p_{1})_{-}}}\rangle_{\sigma_{1},\Q}^{(p_{1})_{-}}
\sigma_{1}(x)\,dx
+ \int_{\subRn}{M}_{\sigma_{2}}^{d}h_{4}(x)^{(p_{2})_{\infty}}
\sigma_{2}(x)\,dx + 1;\\
\intertext{by Lemma~\ref{lemma:wtd-max-bound} applied twice,} 
\nonumber  & \lesssim \int_{\subRn}
{M}_{\sigma_{1}}^{d}(h_{1}^{\frac{p_{1}(\cdot)}{(p_{1})_{-}}})(x)^{(p_{1})_{-}}
\sigma_{1}(x)\,dx
+ \int_{\subRn}h_{4}(x)^{(p_{2})_{\infty}}\sigma_{2}(x)\,dx + 1\\
\nonumber &\lesssim \int_{\subRn} h_{1}(x)^{p_{1}(x)}\sigma_{1}(x)\,dx
+ \int_{\subRn}h_{4}(x)^{(p_{2})_{\infty}}\sigma_{2}(x)\,dx + 1\\    
\intertext{Gathering Lemmas~\ref{lemma:p-infty-px} and \ref{lemma:infty-bound}, we obtain } 
\nonumber & \le \int_{\subRn}h_{4}(x)^{p_{2}(x)}\sigma_{2}(x)\,dx
+\int_{\subRn}\frac{\sigma_{2}(x)}{(e+|x|)^{tn(p_{2})_{-}}}\,dx +1\\
\nonumber & \lesssim 1,
\end{align}
which completes the estimate for $J_2$. 

Now we remain to prove the first two facts.

We prove \eqref{cd1w} as follows. It follows from the
condition of $A_{\vec{p}(\cdot),q(\cdot)}$ and
Lemma \ref{Gu6} that
\begin{align*}
&{|\Q|^{\frac{\alpha}{n} - 2}}\int_{\Q}{h_{1}\sigma_{1}\,d x}\int_{\Q}{h_{4}\sigma_{2}\,d x} \\
 \le&  \|\omega\chi_{\Q}\|_{\qq}^{-1}
\prod_{l=1}^{2}
\|\omega_{l}^{-1}\chi_{\Q}\|_{p_{l}^{\prime}(\cdot)}^{-1}
\|h_{1}\|_{L_{\sigma_{1}}^{\pap}}\|h_{4}\|_{L_{\sigma_{2}}^{\pbp}}
\|\chi_{\Q}\|_{L_{\sigma_{1}}^{\cpap}}\|\chi_{\Q}\|_{L_{\sigma_{2}}^{\cpbp}}\\
\le &
\|\omega\chi_{\Q}\|_{\qq}^{-1}\prod_{l=1}^{2}\|\omega_{l}^{-1}\chi_{\Q}\|_{p_{l}^{\prime}(\cdot)}^{-1}\|\omega_{1}^{-1}\chi_{\Q}\|_{\cpap}
\|\omega_{2}^{-1}\chi_{\Q}\|_{\cpbp}\\
= &
\|\omega\chi_{\Q}\|_{\qq}^{-1}.
\end{align*}
Since $1 \leq u(Q_j^k)$, we can bound the above by $c_0$ using Lemma \ref{lemma:mod-norm}.

Finally, we prove \eqref{P_infty_Bounded}. Since \(\sigma_l(\Q) \geq 1\) and \(u(\Q) \geq 1\), by applying the definition of \(A_{\vec{p}(\cdot), q(\cdot)}\) and repeatedly using Lemma~\ref{lemma:p-infty-cond}, we arrive at
\begin{align*}  
\bigg[\sigma_{1}(\Q)\sigma_{2}(\Q)\bigg]^{q_{\infty}}
& \lesssim 
\bigg( \|\omega_{1}^{-\frac{1}{2}}\chi_{\Q}\|_{2\cpap}^{2(p_{1}^{\prime})_{\infty}}
\|\omega_{2}^{-\frac{1}{2}}\chi_{\Q}\|_{2\cpbp}^{2(p_{2}^{\prime})_{\infty}}\bigg)^{q_{\infty}} \\
& = \bigg(\|\omega_{1}^{-1}\chi_{\Q}\|_{\cpap}^{(p_{1}^{\prime})_{\infty}-1}
\|\omega_{2}^{-1}\chi_{\Q}\|_{\cpbp}^{(p_{2}^{\prime})_{\infty}-1}\bigg)^{q_{\infty}}
\bigg(\prod_{l=1}^{2} \|\omega_{l}^{-1}\chi_{\Q}\|_{p_{l}^{\prime}(\cdot)}
\bigg)^{q_{\infty}}\\
&\lesssim \bigg( \|\omega_{1}^{-1}\chi_{\Q}\|_{\cpap}^{(p_{1}^{\prime})_{\infty}-1}
\|\omega_{2}^{-1}\chi_{\Q}\|_{\cpbp}^{(p_{2}^{\prime})_{\infty}-1}\bigg)^{q_{\infty}}
\bigg(\frac{|\Q|^{2 - \frac{\alpha }{n}}}{\|\omega\chi_{\Q}\|_{\qq}}\bigg)^{q_{\infty}}\\
& \lesssim\bigg(
\sigma_{1}(\Q)^{\frac{(p_{1}^{\prime})_{\infty}-1}{(p_{1}^{\prime})_{\infty}}}
\sigma_{2}(\Q)^{\frac{(p_{2}^{\prime})_{\infty}-1}{(p_{2}^{\prime})_{\infty}}}
\bigg)^{q_{\infty}}\frac{|\Q|^{(2 - \frac{\alpha }{n})q_{\infty}}}{u(\Q)}\\
&\leq\sigma_{1}(\Q)^{\frac{q_{\infty}}{(p_{1})_{\infty}}}
\sigma_{2}(\Q)^{\frac{q_{\infty}}{(p_{2})_{\infty}}}
\frac{|\Q|^{(2 - \frac{\alpha }{n})q_{\infty}}}{u(E_{j}^{k})}.
\end{align*}
This proves \eqref{P_infty_Bounded}.

\textbf{Estimate for $J_{3}$:}

Considering $\Q$ is  such that $(k,j)\in \mathscr{H}$, $\Q$ does not
contain the origin.  Observe that
$\dist(\Q,0) \geq \ell(\Q)$, there exists a constant
$R>1$ depending only on $n$  such that 
\begin{equation}\label{H_cube_estimate}
\sup_{x\in \Q} |x| \leq R \inf_{x\in \Q} |x|. 
\end{equation}
Thus, in light of the continuity of $q(\cdot)$, there exists $x_+$ in the closure of $\Q$ such
that $q_{+}(\Q)=q(x_{+})$, which together with $\qq \in LH$, for all $x \in \Q$, apply ~\eqref{H_cube_estimate} concludes that
\begin{align} \label{eqn:p-J3-est}
0 \leq q_{+}(\Q)-q(x)
\leq  &|q(x_{+})-q(x)|+ |q(x)-q_{\infty}| \notag\\
\leq &\frac{C_{\infty}}{\log(e+|x_{+}|)}
+\frac{C_{\infty}}{\log(e+|x|)}
\lesssim \frac{1}{\log(e+|x|)}.
\end{align}
Follow the same way, in view of $l=1,2$, we deduce $p_l(\cdot)$ satisfies
\begin{equation} \label{eqn:p2-J3-est}
|(p_{l})_{-}(\Q)-p_{l}(x)|
\lesssim \frac{1}{\log(e+|x|)}.
\end{equation}

\smallskip

To estimate \( J_3 \), we partition \( \mathscr{H} \) into two subsets based on the size of the cubes \(\Q\) with respect to \(\sigma_2\),
\[ 
\mathscr{H}_{1}=\lbrace (k,j) \in \mathscr{H}:
\sigma_2(\Q)\leq 1 \rbrace, \quad
\mathscr{H}_{2}=\lbrace (k,j) \in \mathscr{H}: \sigma_2(\Q)>1
\rbrace.
\]
The sums over $\mathscr{H}_1$ and $\mathscr{H}_2$ are denoted as $J_{3}^1$ and $J_{3}^2$, respectively.

We then invoke ~\eqref{eqn:p-J3-est}, Lemmas~\ref{lemma:p-infty-px}
and~\ref{lemma:infty-bound} to deduce that
\begin{align*}
J_3^1&=\sum_{(k,j)\in \mathscr{H}_{1}} \int_{E_{j}^{k}}
\prod_{l=1,4}\langle h_{l}\sigma_{\rho(l)} \rangle_{\Q}^{q(x)}
{\left| {Q_j^k} \right|^{\frac{\alpha }{n} \cdot q(x)}}u(x)\,dx \\
&\lesssim \sum_{(k,j)\in \mathscr{H}_{1}} \int_{E_{j}^{k}}
{\left| {Q_j^k} \right|^{\frac{\alpha }{n} \cdot {q_ + }(Q_j^k)}}\prod_{l=1,4}\langle h_{l}\sigma_{\rho(l)}
\rangle_{\Q}^{q_{+}(\Q)} u(x)\,dx
+\sum_{(k,j)\in \mathscr{H}_{1}}\int_{E_{j}^{k}}\frac{u(x)}{(e+|x|)^{tnq_{-}}}\,dx\\
& \leq \sum_{(k,j)\in \mathscr{H}_{1}}
\int_{E_{j}^{k}}{\left| {Q_j^k} \right|^{\frac{\alpha }{n} \cdot {q_ + }(Q_j^k)}}\prod_{l=1,4}\langle h_{l}\sigma_{\rho(l)}
\rangle_{\Q}^{q_{+}(\Q)} u(x)\,dx + 1.
\intertext{Since \( h_1 \geq 1 \), \( h_4 \leq 1 \), and \( \sigma_2(\Q) \leq 1 \), we apply \eqref{Bound_function_geq1}, \eqref{EQ-I1estimte}, and Lemma~\ref{lemma:diening} to obtain}
&= \sum_{(k,j)\in \mathscr{H}_{1}} \int_{E_{j}^{k}}
\bigg(\int_{\Q}{h_{1}\sigma_{1}\,dy}\bigg)^{q_{+}(\Q)}
\bigg(\frac{1}{\sigma_{2}(\Q)}\int_{\Q}{h_{4}\sigma_{2}\,dy}\bigg)^{q_{+}(\Q)}\\
& \qquad \times
|\Q|^{(\frac{\alpha }{n} - 2)q_{+}(\Q)}\sigma_{2}(\Q)^{q_{+}(\Q)}u(x)\,dx +1; \\
&\lesssim \sum_{(k,j)\in \mathscr{H}_{1}} 
\langle h_{1} \rangle_{\sigma_{1},\Q}^{\delta(\Q)}
\langle h_{4}\rangle_{\sigma_{2},\Q}^{\delta(\Q)}
\int_{E_{j}^{k}}|\Q|^{(\frac{\alpha }{n} - 2)q(x)}
\sigma_{1}(\Q)^{\delta(\Q)}\sigma_{2}(\Q)^{\delta(\Q)} u(x)\,dx + 1\\
&\lesssim \sum_{(k,j)\in \mathscr{H}_{1}}
\prod_{l=1,4}\langle h_{l}\rangle_{\sigma_{\rho(l),\Q}}^{\delta(\Q)}
\sigma_{1}(\Q)^{\frac{\delta(\Q)}{(p_{1})_{-}(\Q)}}\sigma_{2}(\Q)^{\frac{\delta(\Q)}{(p_{2})_{-}(\Q)}}
+ 1\\
&\leq  \sum_{(k,j)\in \mathscr{H}_{1}}
\langle   h_{1}^{\frac{p_{1}(\cdot)}{(p_{1})_{-}(\Q)}} \rangle_{\sigma_{1},\Q}^{\delta(\Q)}
\sigma_{1}(\Q)^{\frac{\delta(\Q)}{(p_{1})_{-}(\Q)}}
\langle h_{4}\rangle_{\sigma_{2},\Q}^{\delta(\Q)}
\sigma_{2}(\Q)^{\frac{\delta(\Q)}{(p_{2})_{-}(\Q)}} + 1;\\
\intertext{Hence, we derive H\"older's inequality and Young's inequality to demonstrate}
& \leq \sum_{(k,j)\in \mathscr{H}_{1}}
\langle h_{1}^{\frac{p_{1}(\cdot)}{(p_{1})_{-}}}
\rangle_{\sigma_{1},\Q}^{(p_{1})_{-}\frac{\delta(\Q)}{(p_{1})_{-}(\Q)}}
\sigma_{1}(\Q)^{\frac{\delta(\Q)}{(p_{1})_{-}(\Q)}}
\langle h_{4}\rangle_{\sigma_{2},\Q}^{\delta(\Q)}
\sigma_{2}(\Q)^{\frac{\delta(\Q)}{(p_{2})_{-}(\Q)}} +1\\
& \lesssim \sum_{\theta=1,c}\left(\sum_{(k,j)\in \mathscr{H}_{1}}
\langle
h_{1}^{\frac{p_{1}(\cdot)}{(p_{1})_{-}}}\rangle_{\sigma_{1},\Q}^{(p_{1})_{-}}
\sigma_{1}(\Q)
+ \sum_{(k,j)\in \mathscr{H}_{1}}
\langle h_{4}\rangle_{\sigma_{2},\Q}^{(p_{2})_{-}(\Q)}
\sigma_{2}(\Q) \right)^{\theta}+ 1\\
& =: \sum_{\theta=1,c}\left(J_{3}^{11}+J_{3}^{12}\right)^{\theta}+1,
\end{align*}
where $c={c_{n,\alpha,p_1(\cdot),p_2(\cdot)}}\ge1$, and it doesn't depend on $\Q$.

Much as we did above, it is easy to show that the estimate for $J_{3}^{11}$ is similar to the estimate for $I_1$, we can indeed do as \eqref{eqn:final-I1-est}.

Hence, we only need to bound $J_{3}^{12}$.  
By Lemmas~\ref{lemma:p-infty-px} and ~\ref{lemma:infty-bound} (applied
twice) and by Lemma~\ref{lemma:wtd-max-bound}, we have
\begin{align*}
J_{3}^{12}
& \lesssim  \sum_{(k,j)\in \mathscr{H}_{1}} \int_{E_{j}^{k}}
\langle
h_{4}\rangle_{\sigma_{2},\Q}^{(p_{2})_{-}(\Q)}\sigma_{2}(x)\,dx \\
&\lesssim \sum_{(k,j)\in \mathscr{H}_{1}}
\int_{E_{j}^{k}}\langle
h_{4}\rangle_{\sigma_{2},\Q}^{(p_{2})_{\infty}}
\sigma_{2}(x)\,dx
+ \sum_{(k,j)\in \mathscr{H}_{1}}
\int_{E_{j}^{k}}\frac{\sigma_{2}(x)}{(e+|x|)^{nt(p_{2})_{-}}}\,dx \\
&\leq \int_{\subRn}
{M}_{\sigma_{2}}^{d}h_{4}(x)^{(p_{2})_{\infty}}\sigma_{2}(x)\,dx
+
\int_{\subRn}\frac{\sigma_{2}(x)}{(e+|x|)^{nt(p_{2})_{-}}}\,dx\\
&\lesssim 1+ \int_{\subRn}h_{4}(x)^{(p_{2})_{\infty}}\sigma_{2}(x)\,dx \\
& \lesssim 1+ \int_{\subRn}h_{4}(x)^{p_{2}(x)}\sigma_{2}(x)\,dx
+\int_{\subRn}\frac{\sigma_{2}(x)}{(e+|x|)^{nt(p_{2})_{-}}}\,dx\\
& \lesssim 1.
\end{align*}
Thus, the estimate for $J_3^1$ is accomplished.

To estimate \( J_3^2 \), by Lemma~\ref{Gu6}, the preceding inequality implies that
\begin{multline}\label{H2-first-estimate}
\int_{\Q}{h_{1}\sigma_{1}\,dy}
\lesssim \|h_{1}\|_{L_{\sigma_{1}}^{\pap}} 
\|\chi_{\Q}\|_{L_{\sigma_{1}}^{\cpap}}
\lesssim \|f_{1}\|_{L_{\sigma_{1}}^{\pap}}
\|\omega_{1}^{-1}\chi_{\Q}\|_{\cpap}
\leq c_0\|\omega_{1}^{-1}\chi_{\Q}\|_{\cpap}.
\end{multline}
Analogously, one can get
\begin{equation}\label{H2-second-estimate}
\int_{\Q}{h_{4}\sigma_{2}\,dy}
\leq c_0\|\omega_{2}^{-1}\chi_{\Q}\|_{\cpbp}.
\end{equation}

We next divide $\mathscr{H}_2$ into two parts $\mathscr{H}_{21}  \cup\mathscr{H}_{22}$, where
\[ \mathscr{H}_{21} = \{ (k,j)\in \mathscr{H}_2 : \sigma_1(\Q) \geq 1 \},
\quad
\mathscr{H}_{22} = \{ (k,j)\in \mathscr{H}_2 : \sigma_1(\Q)< 1 \}.  \]
The corresponding sums over $\mathscr{H}_{21}$ and $\mathscr{H}_{22}$ are denoted by $J_3^{21}$ and $J_3^{22}$ respectively.

Let us estimate the sum $J_3^{21}$. 
Given  \eqref{H2-first-estimate} and  \eqref{H2-second-estimate}, by
Lemma~\ref{lemma:p-infty-px}, we obtain
\begin{align*}
&\sum_{(k,j)\in \mathscr{H}_{21}} \int_{E_{j}^{k}}\prod_{l=1,4}
\langle h_{l}\sigma_{\rho(l)}\rangle_{\Q}^{q(x)}{\left| {Q_j^k} \right|^{\frac{\alpha }{n} \cdot q(x)}}u(x)\,dx \\
&\lesssim \sum_{(k,j)\in \mathscr{H}_{21}}
\int_{E_{j}^{k}} \prod_{l=1,4}
\bigg(c_0^{-1}\|\omega_{\rho(l)}^{-1}\chi_{\Q}\|_{p_{\rho(l)}^{\prime}(\cdot)}^{-1}
\int_{\Q}h_{l}\sigma_{\rho(l)}\,dy \bigg)^{q(x)}\prod_{l=1}^{2}
\bigg(\frac{\|\omega_{l}^{-1}\chi_{\Q}\|_{p_{l}^{\prime}(\cdot)}}{|\Q|^{1 - \frac{\alpha }{{2n}}}}\bigg)^{q(x)}u(x)\,dx\\
& \lesssim \sum_{(k,j)\in
	\mathscr{H}_{21}}\int_{E_{j}^{k}}
\prod_{l=1,4}\bigg(c_0^{-1}\|\omega_{\rho(l)}^{-1}\chi_{\Q}\|_{p_{\rho(l)}^{\prime}(\cdot)}^{-1}
\int_{\Q}h_{l}\sigma_{\rho(l)}\,dy\bigg)^{q_\infty} 
\prod_{l=1}^{2}\bigg(\frac{\|\omega_{l}^{-1}\chi_{\Q}\|_{p_{l}^{\prime}(\cdot)}}
{|\Q|^{1 - \frac{\alpha }{{2n}}}}\bigg)^{q(x)}u(x)\,dx \\
&  + 
\sum_{(k,j)\in \mathscr{H}_{21}} \int_{E_{j}^{k}} \prod_{l=1}^{2}
\bigg(\frac{\|\omega_{l}^{-1}\chi_{\Q}\|_{p_{l}^{\prime}(\cdot)}}{|\Q|^{1 - \frac{\alpha }{{2n}}}}\bigg)^{q(x)}
\frac{u(x)}{(e+|x|)^{tnq_{-}}}\,dx \\
&=: J_3^{211}+ J_3^{212}.
\end{align*}

We first estimate $J_3^{212}$. Since \( \sigma_2(E_j^k) \gtrsim \sigma_{2}(\Q) \geq 1 \), applying \eqref{H_cube_estimate}, \eqref{eqn:modular-Ap}, and Lemma~\ref{lemma:infty-bound}, we conclude that
\begin{align*}
	J_3^{212}
& \leq \sum_{(k,j)\in \mathscr{H}_{21}}\sup_{x\in
	\Q}(e+|x|)^{-ntq_{-}}
\int_{\Q}{\prod_{l=1}^{2}\|\omega_{l}^{-1}\chi_{\Q}\|_{p_{l}^{\prime}(\cdot)}^{q(x)}
	|\Q|^{(\frac{\alpha }{n} - 2)q(x)}u(x)\,dx}\\
& \lesssim \sum_{(k,j)\in \mathscr{H}_{21}}\inf_{x\in \Q}(e+|x|)^{-ntq_{-}}\sigma_{2}(E_j^k)\\
& \lesssim \int_{\subRn}\frac{\sigma_{2}(x)}{(e+|x|)^{ntq_{-}}}\,dx \\
& \lesssim 1.
\end{align*}

It remains to justify $J_3^{211}$.
Since $\sigma_l(\Q)\geq 1$ for $l=1,2$,  we invoke Lemma~\ref{lemma:p-infty-cond} to arrive at
\begin{equation} \label{eqn:p-infty-Hest}
\frac{\sigma_{l}(\Q)}
{\|\omega_{l}^{-1}\chi_{\Q}\|_{p_l'(\cdot)}}
\lesssim\frac{\sigma_{l}(\Q)}
{\sigma_{l}(\Q)^{\frac{1}{(p_{l}^{\prime})_{\infty}}}}
=  \sigma_{l}(\Q)^{\frac{1}{(p_{l})_{\infty}}}. 
\end{equation}
Hence, we use ~\eqref{eqn:modular-Ap} and Young's inequality to deduce that   
\begin{align*}
	J_3^{211}&\lesssim \sum_{(k,j)\in
	\mathscr{H}_{21}}\int_{E_{j}^{k}}{\prod_{l=1,4}
	\langle h_{l} \rangle_{\sigma_{\rho(l)},\Q}^{q_{\infty}}
	\sigma_{1}(\Q)^{\frac{q_{\infty}}{(p_{1})_{\infty}}}
	\sigma_{2}(\Q)^{\frac{q_{\infty}}{(p_{2})_{\infty}}}}
\prod_{l=1}^{2}\bigg(\frac{\|\omega_{l}^{-1}\chi_{\Q}
	\|_{p_{l}^{\prime}(\cdot)}}{|\Q|}\bigg)^{q(x)}u(x)\,dx \\
& \leq \sum_{(k,j)\in \mathscr{H}_{21}}
\prod_{l=1,4} \langle h_{l} \rangle_{\sigma_{\rho(l)},\Q}^{q_{\infty}}
\sigma_{1}(\Q)^{\frac{q_{\infty}}{(p_{1})_{\infty}}}
\sigma_{2}(\Q)^{\frac{q_{\infty}}{(p_{2})_{\infty}}}
\int_{\Q}{\prod_{l=1}^{2}\|\omega_{l}^{-1}\chi_{\Q} 
	\|_{p_{l}^{\prime}(\cdot)}^{q(x)}|\Q|^{(\frac{\alpha }{n} - 2)q(x)}u(x)\,dx}\\
&\lesssim \left(\sum_{(k,j)\in \mathscr{H}_{21}}
\langle h_{1}\rangle_{\sigma_{1},\Q}^{(p_{1})_{\infty}}\sigma_{1}(\Q)
+\sum_{(k,j)\in \mathscr{H}_{21}}\langle
h_{4}\rangle_{\sigma_{2},\Q}^{(p_{2})_{\infty}}
\sigma_{2}(\Q)\right)^{\frac{{{q_\infty }}}{{{p_\infty }}}}.\\
\end{align*}     

The estimate above is the same as for \( J_2 \). Since \( \sigma_1(\Q) \geq 1 \), we obtain \eqref{sigma1_Averange_Bound} and can use inequality \eqref{eqn:J2-final-est} to estimate it. This immediately implies that \( J_{3}^{21} \lesssim 1 \).

To control \( J_{3}^{22} \), we proceed by replacing the exponent \( q_\infty \) with \( \delta(\Q) \) and begin the estimate in the same way. For \( x \in \Q \), it follows from \eqref{eqn:p2-J3-est} that
\[ \bigg|\frac{1}{q(x)}-\frac{1}{\delta(\Q)}\bigg|
\leq \bigg|\frac{1}{p_1(x)}-\frac{1}{(p_1)_-(\Q)}\bigg|
+ \bigg|\frac{1}{p_2(x)}-\frac{1}{(p_2)_-(\Q)}\bigg| 
\lesssim \frac{1}{\log(e+|x|)}.  \]

Much as we did above for $J_3^{21}$, it is easy to deduce that
\begin{align*}
J_3^{22}=& \sum_{(k,j)\in \mathscr{H}_{22}} \int_{E_{j}^{k}}\prod_{l=1,4}
\langle h_{l}\sigma_{\rho(l)}\rangle_{\Q}^{q(x)}{\left| {Q_j^k} \right|^{\frac{\alpha }{n} \cdot q(x)}}u(x)\,dx \\
&  \lesssim \sum_{(k,j)\in
	\mathscr{H}_{22}}\int_{E_{j}^{k}}
\prod_{l=1,4}\bigg(c_0^{-1}\|\omega_{\rho(l)}^{-1}\chi_{\Q}\|_{p_{\rho(l)}^{\prime}(\cdot)}^{-1}
\int_{\Q}h_{l}\sigma_{\rho(l)}\,dy\bigg)^{\delta(\Q)} 
\prod_{l=1}^{2}\bigg(\frac{\|\omega_{l}^{-1}\chi_{\Q}\|_{p_{l}^{\prime}(\cdot)}}
{|\Q|^{1 - \frac{\alpha }{{2n}}}}\bigg)^{q(x)}u(x)\,dx \\
& + 
\sum_{(k,j)\in \mathscr{H}_{22}} \int_{E_{j}^{k}} \prod_{l=1}^{2}
\bigg(\frac{\|\omega_{l}^{-1}\chi_{\Q}\|_{p_{l}^{\prime}(\cdot)}}{|\Q|^{1 - \frac{\alpha }{{2n}}}}\bigg)^{q(x)}
\frac{u(x)}{(e+|x|)^{tnq_{-}}}\,dx \\
& :=J_3^{221}+ J_3^{222}.
\end{align*}

The estimate of $J_3^{222}$ is the same as $J_3^{212}$. To estimate \( J_3^{221} \), we apply \eqref{eqn:modular-Ap}–\eqref{EQ-I11estimate}, \eqref{eqn:J2-final-est-2}, and Young's inequality to obtain
\begin{align*}
	J_3^{221}&\lesssim \sum_{(k,j)\in
	\mathscr{H}_{22}}\int_{E_{j}^{k}}\left({\prod_{l=1,4}
	\langle h_{l} \rangle_{\sigma_{\rho(l)},\Q}^{\delta(\Q)}
	\sigma_{l}(\Q)^{\frac{\delta(\Q)}{(p_l)_-(\Q)}}}\right)
\prod_{l=1}^{2}\bigg(\frac{\|\omega_{l}^{-1}\chi_{\Q}
	\|_{p_{l}^{\prime}(\cdot)}}{|\Q|^{1 - \frac{\alpha }{{2n}}}}\bigg)^{q(x)}u(x)\,dx \\
& \leq \sum_{(k,j)\in \mathscr{H}_{22}}
\left(\prod_{l=1,4}\langle h_{l} \rangle_{\sigma_{\rho(l)},\Q}^{\delta(\Q)}
\sigma_{l}(\Q)^{\frac{\delta(\Q)}{(p_l)_-(\Q)}}\right)
\int_{\Q}{\prod_{l=1}^{2}\|\omega_{l}^{-1}\chi_{\Q} 
	\|_{p_{l}^{\prime}(\cdot)}^{q(x)}|\Q|^{(\frac{\alpha }{n} - 2)q(x)}u(x)\,dx}\\
&\lesssim \left(\sum_{(k,j)\in \mathscr{H}_{22}}
\langle h_{1}\rangle_{\sigma_{1},\Q}^{(p_{1})_-(\Q)}\sigma_{1}(\Q)
+\sum_{(k,j)\in \mathscr{H}_{22}}\langle
h_{4}\rangle_{\sigma_{2},\Q}^{(p_{2})_-(\Q)}
\sigma_{2}(\Q)\right)^{\frac{{\delta (Q_j^k)}}{{\eta (Q_j^k)}}}\\
&\lesssim \sum_{\theta=1,c}\left(\sum_{(k,j)\in \mathscr{H}_{22}}
\langle h_{1}\rangle_{\sigma_{1},\Q}^{(p_{1})_-(\Q)}\sigma_{1}(\Q)
+\sum_{(k,j)\in \mathscr{H}_{22}}\langle
h_{4}\rangle_{\sigma_{2},\Q}^{(p_{2})_-(\Q)}
\sigma_{2}(\Q)\right)^\theta.
\end{align*}     
where $c={c_{n,\alpha,p_1(\cdot),p_2(\cdot)}}\ge1$, and it doesn't depend on $\Q$.

To estimate the second term in parentheses, we proceed as in the final estimate of \( J_3^{12} \).

The first term is estimated similarly to \( J_3^{11} \) above. Since \( h_1 \geq 1 \) and by Hölder's inequality, we have

\[ \langle h_{1} \rangle_{\sigma_{1}, \Q}^{(p_{1})_-(\Q)} \leq \langle h_{1}^{\frac{p_{1}(\cdot)}{(p_{1})_-(\Q)}} \rangle_{\sigma_{1}, \Q}^{(p_{1})_-(\Q)} \leq \langle h_{1}^{\frac{p_{1}(\cdot)}{(p_{1})_-}} \rangle_{\sigma_{1}, \Q}^{(p_{1})_-}. \]

This yields \( J_{3}^2 \leq J_3^{21} + J_3^{22} \lesssim 1 \).
Therefore, \( J_3 \leq J_{3}^1 + J_{3}^2 \lesssim 1 \) and \( I_2 \leq J_1 + J_2 + J_3 \lesssim 1 \).
This completes the desired estimate for \( I_2 \).

\textbf{Estimate for $I_4$:}

The estimate for $I_4$ is similar to that of $I_2$. Let us  decompose $I_4$ into the same parts as above and only give key inequalities to estimate it.
By the multilinear fractional Calder\'on-Zygmund cubes related to $\M^d_{\alpha}(h_2\sigma_1,h_4\sigma_2)$.  
We may decompose and define the sets $\mathscr{F}$, $\mathscr{G}$
and $\mathscr{H}$. In this situation, we define the sums over these sets by $N_1$, $N_2$ and $N_3$.

\textbf{Estimate for $N_{1}$:} 

Similar to the estimate of $J_1$, observe that $h_2, h_4 \leq 1$, we deduce that 
\begin{align*}
N_{1}
&= \sum_{(k,j)\in \mathscr{F}}\int_{E_{j}^{k}}{\prod_{l=2,4}\langle h_{l}\sigma_{\rho(l)}\rangle_{\Q}^{q(x)}|\Q|^{\frac{\alpha }{n}q(x)}u(x)\,dx}\\
&\leq \sum_{(k,j)\in \mathscr{F}}\int_{E_{j}^{k}}{\prod_{l=1}^{2}\langle \sigma_{l}\rangle_{\Q}^{q(x)}|\Q|^{\frac{\alpha }{n}q(x)}u(x)\,dx}\\
&= \sum_{(k,j)\in \mathscr{F}}\int_{E_{j}^{k}}
\prod_{l=1}^{2}
\sigma_{1}(\Q)^{\delta(\Q)}\sigma_{l}(\Q)^{q(x)-\delta(\Q)}\sigma_{2}(\Q)^{\delta(\Q)}|\Q|^{(\frac{\alpha }{n} - 2)q(x)}u(x)\,dx \\
&\leq \sum_{(k,j)\in \mathscr{F}}
\prod_{l=1}^{2}\bigg(1+\sigma_{l}(\Q) \bigg)^{q_{+}(\Q)-\delta(\Q)}
\int_{E_{j}^{k}}\sigma_{1}(\Q)^{\delta(\Q)}\sigma_{2}(\Q)^{\delta(\Q)}|\Q|^{(\frac{\alpha }{n} - 2)q(x)}u(x)\,dx\\
&\lesssim \prod_{l=1}^{2}\bigg(1+\sigma_{l}(P)\bigg)^{q_{+}-\delta_{-}}
\sum_{(k,j)\in \mathscr{F}}
\sigma_{1}(\Q)^{\frac{\delta(\Q)}{(p_{1})_{-}(\Q)}}\sigma_{2}(\Q)^{\frac{\delta(\Q)}{(p_{2})_{-}(\Q)}}\\
&\lesssim \sum_{\theta=1,c}\left(\sum_{(k,j)\in \mathscr{F}}\sigma_{1}(\Q)
+ \sum_{(k,j)\in \mathscr{F}}\sigma_{2}(\Q)\right)^\theta\\
&\lesssim \sum_{\theta=1,c}\left(\sum_{(k,j)\in \mathscr{F}}\sigma_{1}(E_{j}^{k})
+\sum_{(k,j)\in \mathscr{F}}\sigma_{2}(E_{j}^{k})\right)^\theta\\
&\leq \sum_{\theta=1,c}\left(\sigma_{1}(P)+\sigma_{2}(P) \right)^\theta\\
& \lesssim 1,
\end{align*}
where $c={c_{n,\alpha,p_1(\cdot),p_2(\cdot)}}\ge1$, and it doesn't depend on $\Q$.

\textbf{Estimate for $N_{2}$:} 

We modify the argument for $J_2$. 
Gathering the
definition of $A_{\vec{p}(\cdot),q(\cdot)}$ and Lemma~\ref{Gu6}, we obtain
\begin{align*}
&\quad {\left| {Q_j^k} \right|^{\frac{\alpha }{n} - 2}}\int_{\Q}{h_{2}\sigma_{1}\,dy}\int_{\Q}{h_{4}\sigma_{2}\,dy} \\
&  \lesssim \|\omega \chi_{\Q}\|_{\qq}^{-1}
\prod_{l=1}^{2}
\|\omega_{l}^{-1}\chi_{\Q}\|_{p_{l}^{\prime}(\cdot)}^{-1}
\|h_{2}\|_{L_{\sigma_{1}}^{\pap}}\|h_{4}\|_{L_{\sigma_{2}}^{\pbp}}
\|\chi_{\Q}\|_{L_{\sigma_{1}}^{\cpap}}\|\chi_{\Q}\|_{L_{\sigma_{2}}^{\cpbp}}\\
&   =
\|\omega\chi_{\Q}\|_{\qq}^{-1}\prod_{l=1}^{2}\|\omega_{l}^{-1}\chi_{\Q}\|_{p_{l}^{\prime}(\cdot)}^{-1}\|h_{2}\|_{L_{\sigma_{1}}^{\pap}}\|h_{4}\|_{L_{\sigma_{2}}^{\pbp}}\|\omega_{1}^{-1}\chi_{\Q}\|_{\cpap}
\|\omega_{2}^{-1}\chi_{\Q}\|_{\cpbp};\\
\intertext{Since
	$u(Q)\geq u(P_i)\geq 1$, by Lemma~\ref{lemma:mod-norm}, the above}
& \lesssim\|\omega\chi_{\Q}\|_{\qq}^{-1} \\
& \leq c_0.
\end{align*}
Therefore, by Lemma~\ref{lemma:p-infty-px},
\begin{align*}
N_{2}
&= \sum_{(k,j)\in \mathscr{G}}\int_{E_{j}^{k}}
\prod_{l=2,4}\langle h_{l}\sigma_{\rho(l)}\rangle_{\Q}^{q(x)}{\left| {Q_j^k} \right|^{\frac{\alpha }{n}\cdot {q(x)}}}u(x)\,dx \\
& \lesssim \sum_{(k,j)\in \mathscr{G}}
\int_{E_{j}^{k}}\bigg({c_0^{-1}}{{\left| {Q_j^k} \right|^{\frac{\alpha }{n} - 2}}}
\int_{\Q}h_{2}\sigma_{1}\,dy \int_{\Q}h_{4}\sigma_{2}\,dy\bigg)^{q(x)}u(x)\,dx\\
&\lesssim \sum_{(k,j)\in \mathscr{G}}
\int_{E_{j}^{k}} \prod_{l=2,4}\langle
h_{l}\sigma_{\rho(l)}\rangle_{\Q}^{q_{\infty}}{\left| {Q_j^k} \right|^{\frac{\alpha }{n} \cdot {q_\infty }}}u(x)\,dx
+ \sum_{(k,j)\in \mathscr{G}}\int_{E_{j}^{k}}\frac{u(x)}{(e+|x|)^{tnq_{-}}}\,dx.
\end{align*}
By Lemma~\ref{lemma:infty-bound}, the second sum of above is bounded by 1, which together with ~\eqref{P_infty_Bounded} gives
\begin{align*}
&\quad\sum_{(k,j)\in \mathscr{G}}\int_{E_{j}^{k}}\prod_{l=2,4}
\langle h_{l}\sigma_{\rho(l)}\rangle_{\Q}^{q_{\infty}}{\left| {Q_j^k} \right|^{\frac{\alpha }{n} \cdot {q_\infty }}}u(x)\,dx \\
&  = \sum_{(k,j)\in
	\mathscr{G}}\int_{E_{j}^{k}}|\Q|^{(\frac{\alpha }{n} - 2)q_{\infty}}\prod_{l=2,4}
\langle h_{l}\rangle_{\sigma_{\rho(l)},\Q}^{q_{\infty}}
\sigma_{1}(\Q)^{q_{\infty}}\sigma_{2}(\Q)^{q_{\infty}}u(x)\,dx
\\
& \lesssim \sum_{(k,j)\in \mathscr{G}}
\langle h_{2}\rangle_{\sigma_{1},\Q}^{q_{\infty}}
\sigma_{1}(\Q)^{\frac{q_{\infty}}{(p_{1})_{\infty}}}
\langle h_{4}\rangle_{\sigma_{2},\Q}^{q_{\infty}}
\sigma_{2}(\Q)^{\frac{q_{\infty}}{(p_{2})_{\infty}}},\\
\intertext{Consequently, we conclude from Young's inequality with Lemmas~\ref{lemma:wtd-max-bound},~\ref{lemma:p-infty-px},
~\ref{lemma:infty-bound} that}
& \lesssim \left(\sum_{(k,j)\in \mathscr{G}}
\langle h_{2}\rangle_{\sigma_{1},\Q}^{(p_{1})_{\infty}}\sigma_{1}(\Q)
+  \sum_{(k,j)\in \mathscr{G}}
\langle h_{4}\rangle_{\sigma_{2},\Q}^{(p_{2})_{\infty}}\sigma_{2}(\Q)\right)^{\frac{{{q_\infty }}}{{{p_\infty }}}}\\
& \lesssim  \left(\sum_{(k,j)\in \mathscr{G}}
\langle
h_{2}\rangle_{\sigma_{1},\Q}^{(p_{1})_{\infty}}\sigma_{1}(E_{j}^{k})
+  \sum_{(k,j)\in \mathscr{G}}
\langle
h_{4}\rangle_{\sigma_{2},\Q}^{(p_{2})_{\infty}}\sigma_{2}(E_{j}^{k})\right)^{\frac{{{q_\infty }}}{{{p_\infty }}}}\\ 
& \le 
\left(\int_{\subRn}{M}_{\sigma_{1}}^{d}h_{2}(x)^{(p_{1})_{\infty}}\sigma_{1}(x)\,dx
+\int_{\subRn}{M}_{\sigma_{2}}^{d}h_{4}(x)^{(p_{2})_{\infty}}\sigma_{2}(x)\,dx\right)^{\frac{{{q_\infty }}}{{{p_\infty }}}}\\
& \lesssim
\left(\int_{\subRn}h_{2}(x)^{(p_{1})_{\infty}}\sigma_{1}(x)\,dx
+ \int_{\subRn}h_{4}(x)^{(p_{2})_{\infty}}\sigma_{2}(x)\,dx\right)^{\frac{{{q_\infty }}}{{{p_\infty }}}}\\
& \lesssim \left(\int_{\subRn}h_{2}(x)^{p_{1}(x)}\sigma_{1}(x)\,dx
+\int_{\subRn}h_{4}(x)^{p_{2}(x)}\sigma_{2}(x)\,dx+ \sum\limits_{l = 1}^2 {\int_{\subRn}\frac{\sigma_{l}(x)}{(e+|x|)^{tn(p_{l})_{-}}}\,dx}\right)^{\frac{{{q_\infty }}}{{{p_\infty }}}} \\
& \lesssim 1.
\end{align*}

\textbf{Estimate for $N_{3}$:} 
Estimating \( N_3 \) is similar to \( J_3 \) above. We proceed by splitting \( \mathscr{H} \) into \( \mathscr{H}_1 \) and \( \mathscr{H}_2 \).
To divide $\mathscr{H}_1$, we need to fix some notations. Define
\[ \mathscr{H}_{11} = \{ (k,j) \in \mathscr{H}_{1} : \sigma_1(\Q)\leq
1, \sigma_2(\Q) \leq 1 \} \]
and
\[ \mathscr{H}_{12} = \{ (k,j) \in \mathscr{H}_{1} : \sigma_1(\Q)>
1, \sigma_2(\Q) \leq 1\}. \]
The corresponding sums over $\mathscr{H}_{11}$ and $\mathscr{H}_{12}$ are defined respectively as $N_3^{11}$ and $N_3^{12}$.

The estimate for $N_3^{11}$ is similar to $J_{3}^1$. 
Collecting Lemmas~\ref{lemma:p-infty-px}
and~\ref{lemma:infty-bound}, since $h_2,\,h_4\leq 1$, we deduce that
\begin{align*}
	N_3^{11}=& \sum_{(k,j)\in \mathscr{H}_{11}} \int_{E_{j}^{k}}
\prod_{l=2,4}\langle h_{l}\sigma_{\rho(l)} \rangle_{\Q}^{q(x)}
{\left| {Q_j^k} \right|^{\frac{\alpha }{n} \cdot q(x)}}u(x)\,dx \\
  &\lesssim \sum_{(k,j)\in \mathscr{H}_{11}} \int_{E_{j}^{k}}
\prod_{l=2,4}\langle h_{l}\sigma_{\rho(l)}
\rangle_{\Q}^{q_{+}(\Q)} {\left| {Q_j^k} \right|^{\frac{\alpha }{n} \cdot {q_{+}(\Q)}}}u(x)\,dx
+\sum_{(k,j)\in \mathscr{H}_{11}}\int_{E_{j}^{k}}\frac{u(x)}{(e+|x|)^{tnq_{-}}}\,dx\\
 &\le \sum_{(k,j)\in \mathscr{H}_{11}}
\prod_{l=2,4}\langle h_{l}
\rangle_{\sigma_{\rho(l)},\Q}^{q_{+}(\Q)}  
\int_{E_{j}^{k}}|\Q|^{(\frac{\alpha }{n} - 2) q_{+}(\Q)} \prod_{l=2,4}\sigma_{\rho(l)}(\Q)^{q_{+}(\Q)} u(x)\,dx + 1.
\intertext{
	Firstly, since $h_2,\, h_4\leq 1$ and $\sigma_l(\Q)\leq 1$, $l=1,\,2$, we invoke \eqref{EQ-I1estimte} and Lemma~\ref{lemma:diening} to arrive at} 
&  \leq \sum_{(k,j)\in \mathscr{H}_{11}}
\prod_{l=2,4}\langle h_{l}
\rangle_{\sigma_{\rho(l)},\Q}^{\delta(\Q)}  
\int_{E_{j}^{k}}|\Q|^{(\frac{\alpha }{n} - 2) q_{+}(\Q)}
\prod_{l=2,4}\sigma_{\rho(l)}(\Q)^{\delta(\Q)} u(x)\,dx + 1 \\
&\lesssim \sum_{(k,j)\in \mathscr{H}_{11}}
\prod_{l=2,4}\langle h_{l}\rangle_{\sigma_{\rho(l),\Q}}^{\delta(\Q)}
\sigma_{1}(\Q)^{\frac{\delta(\Q)}{(p_{1})_{-}(\Q)}}\sigma_{2}(\Q)^{\frac{\delta(\Q)}{(p_{2})_{-}(\Q)}}
+ 1;\\
\intertext{by Young's inequality,}
& \lesssim \sum_{\theta=1,c}\left(\sum_{(k,j)\in \mathscr{H}_{11}}
\langle
h_{2}\rangle_{\sigma_{1},\Q}^{(p_{1})_{-}(\Q)}
\sigma_{1}(\Q)
+ \sum_{(k,j)\in \mathscr{H}_{11}}
\langle h_{4}\rangle_{\sigma_{2},\Q}^{(p_{2})_{-}(\Q)}
\sigma_{2}(\Q)\right)^\theta+ 1.
\end{align*}
where $c={c_{n,\alpha,p_1(\cdot),p_2(\cdot)}}\ge1$, and it doesn't depend on $\Q$. Both of the final terms are estimated as $N_3^{12}$ above.

\medskip
Secondly, we replace the $h_1$ with $h_2$, in view of \eqref{H2-first-estimate},~\eqref{H2-second-estimate} and Lemma~\ref{lemma:p-infty-px}, we conclude that
\begin{align*}
&\sum_{(k,j)\in \mathscr{H}_{12}} \int_{E_{j}^{k}}\prod_{l=2,4}
\langle h_{l}\sigma_{\rho(l)}\rangle_{\Q}^{q(x)}{\left| {Q_j^k} \right|^{\frac{\alpha }{n} \cdot q(x)}}u(x)\,dx \\
  \lesssim& \sum_{(k,j)\in
	\mathscr{H}_{12}}\int_{E_{j}^{k}}
\prod_{l=2,4}\bigg(c_0^{-1}\|\omega_{\rho(l)}^{-1}\chi_{\Q}\|_{p_{\rho(l)}^{\prime}(\cdot)}^{-1}
\int_{\Q}h_{l}\sigma_{\rho(l)}\,dy\bigg)^{\delta(\Q)}
\prod_{l=1}^{2}\bigg(\frac{\|\omega_{l}^{-1}\chi_{\Q}\|_{p_{l}^{\prime}(\cdot)}}
{|\Q|^{1 - \frac{\alpha }{{2n}}}}\bigg)^{q(x)}u(x)\,dx \\
  &+
\sum_{(k,j)\in \mathscr{H}_{12}} \int_{E_{j}^{k}} \prod_{l=1}^{2}
\bigg(\frac{\|\omega_{l}^{-1}\chi_{\Q}\|_{p_{l}^{\prime}(\cdot)}}{|\Q|^{1 - \frac{\alpha }{{2n}}}}\bigg)^{q(x)}
\frac{u(x)}{(e+|x|)^{tnq_{-}}}\,dx \\
=:&N_3^{121}+ N_3^{122}.
\end{align*}

The estimate for $N_3^{122}$ is identical to the estimate for $J_3^{212}$. Hence, we obtain
\begin{align*}
N_3^{121}&\lesssim \sum_{(k,j)\in
	\mathscr{H}_{12}}\int_{E_{j}^{k}}\left({\prod_{l=2,4}
	\langle h_{l} \rangle_{\sigma_{\rho(l)},\Q}^{\delta(\Q)}
	\sigma_{l}(\Q)^{\frac{\delta(\Q)}{(p_l)_-(\Q)}}}\right)
\prod_{J=1}^{2}\bigg(\frac{\|\omega_{J}^{-1}\chi_{\Q}
	\|_{p_{J}^{\prime}(\cdot)}}{|\Q|^{1 - \frac{\alpha }{{2n}}}}\bigg)^{q(x)}u(x)\,dx \\
& \leq \sum_{(k,j)\in \mathscr{H}_{12}}
\left(\prod_{l=2,4}\langle h_{l} \rangle_{\sigma_{\rho(l)},\Q}^{\delta(\Q)}
\sigma_{l}(\Q)^{\frac{\delta(\Q)}{(p_{l})_-(\Q)}}\right)
\int_{\Q}{\prod_{J=1}^{2}\|\omega_{J}^{-1}\chi_{\Q} 
	\|_{p_{J}^{\prime}(\cdot)}^{q(x)}|\Q|^{(\frac{\alpha }{n} - 2)q(x)}u(x)\,dx}\\
&\lesssim \sum_{\theta=1,c}\left(\sum_{(k,j)\in \mathscr{H}_{12}}
\langle h_{2}\rangle_{\sigma_{1},\Q}^{(p_{1})_-(Q_j^k)}\sigma_{1}(\Q)
+\sum_{(k,j)\in \mathscr{H}_{12}}\langle
h_{4}\rangle_{\sigma_{2},\Q}^{(p_{2})_-(Q_j^k)}
\sigma_{2}(Q_j^k)\right)^\theta.
\end{align*}     
where $c={c_{n,\alpha,p_1(\cdot),p_2(\cdot)}}\ge1$, and it doesn't depend on $\Q$.

Analogously, same as the estimate for $J_3^{12}$, one can get $N_3^{12} \lesssim 1$.

To calculate the total for $\mathscr{H}_{2}$, $N_{3}^{2}$, which is similar to that of $J_3$, we examine the sets $\mathscr{H}_{21}$ and $\mathscr{H}_{22}$.

The estimation of $\mathscr{H}_{21}$, denoted as $N_3^{21}$, is similar to our previous estimation of this set but with $h_1$ replaced by $h_2$. This substitution generates two terms analogous to $J_3^{211}$ and $J_3^{212}$, which we label as $N_3^{211}$ and $N_3^{212}$, respectively. The estimation of $N_3^{212}$ mirrors that of $J_3^{212}$, and $N_3^{211}$ is estimated similarly to $J_3^{211}$. Note that in the final step, the $h_2$ term can be estimated just like the $h_4$ term since both satisfy $h_2, h_4 \leq 1$. Therefore, we readily obtain $N_3^{21} \lesssim 1$.

To estimate the sum over $\mathscr{H}_{22}$, denoted as $N_3^{22}$, we proceed similarly but replace $h_1$ with $h_2$. This yields terms analogous to $J_3^{221}$ and $J_3^{222}$, which we denote as $N_3^{221}$ and $N_3^{222}$, respectively. The estimation of $N_3^{221}$ remains the same as before. For $N_3^{222}$, we use the same reasoning but replace the exponent $\delta(\Q)$ with $q_\infty$.    Therefore, the final line of the estimate can be controlled by 
$$ \sum_{\theta=1,c}\left(\sum_{(k,j)\in \mathscr{H}_{22}}
\langle h_{1}\rangle_{\sigma_{1},\Q}^{(p_{1})_\infty}\sigma_{1}(\Q)
+\sum_{(k,j)\in \mathscr{H}_{22}}\langle
h_{4}\rangle_{\sigma_{2},\Q}^{(p_{2})_{\infty}}
\sigma_{2}(\Q)\right)^\theta.$$
Since the subsequent steps are entirely similar to those in the estimation of \( J_2 \), we readily conclude that \( N_3^{22} \lesssim 1 \). It follows immediately that \( I_4 \leq N_1 + N_2 + N_3 \lesssim 1 \).

In conclusion, we have \( \sum\limits_{i=1}^4 I_i \lesssim 1 \), which is the desired result.

\vspace{1cm}
\noindent{\bf Acknowledgements } 
The first author would like to thank Prof. Mingquan Wei for his suggestions and discussion. The authors also would like to thank the editors and reviewers for careful reading and valuable comments, which lead to the improvement of this paper.

\medskip 

\noindent{\bf Data Availability} Our manuscript has no associated data.

\medskip 
\noindent{\bf\Large Declarations}
\medskip 

\noindent{\bf Conflict of interest} The authors state that there is no conflict of interest.

\end{document}